 \theoremstyle{plain}
 \newtheorem{theorem}[subsection]{Theorem}
 \newtheorem{lemma}[subsection]{Lemma}
 \newtheorem{corollary}[subsection]{Corollary}
 \theoremstyle{definition}
 \newtheorem{definition}[subsection]{Definition}
 \newtheorem{definitionnotation}[subsection]{Notation}
 \newtheorem{remark}[subsection]{Remark}
 \theoremstyle{remark}
 \theoremstyle{plain} 
\newcommand{\thistheoremname}{}
\newtheorem{genericthm}[subsection]{\thistheoremname}
  \newtheorem*{genericthm*}{\thistheoremname}
\newenvironment{namedthm*}[1]
  {\renewcommand{\thistheoremname}{#1}%
   \begin{genericthm*}}
  {\end{genericthm*}}
\newcommand{\spec}{\operatorname{Spec}}
\newcommand{\divisor}{\operatorname{Div}}
\newcommand{\chara}{\operatorname{char}}
\newcommand{\pr}{\operatorname{pr}}
\newcommand{\homo}{\operatorname{Hom}}
\newcommand{\supp}{\operatorname{Supp}}
\newcommand{\red}{\operatorname{red}}
\newcommand{\gp}{\operatorname{gp}}
\newcommand{\et}{\operatorname{\Acute{e}t}}
\newcommand{\Et}{\operatorname{Et}}
\newcommand{\zar}{\operatorname{Zar}}
\newcommand{\rank}{\operatorname{rank}}
\newcommand{\fs}{\operatorname{fs}}
\newcommand{\inte}{\operatorname{int}}
\newcommand{\cone}{\operatorname{Cone}}
\newcommand{\CDiv}{\operatorname{CDiv}}
\newcommand{\logzar}{\operatorname{\mathbf{Log}_{\zar}(\mathbb K)}}
\newcommand{\loget}{\operatorname{\mathbf{Log}_{\et}(\mathbb K)}}
\newcommand{\logsch}{\operatorname{\mathbf{LogSch}}}
\newcommand{\Modd}{\operatorname{\mathbf{Mod.}}}
\newcommand{\modd}{\operatorname{\mathbf{Mod}}}
\newcommand{\Log}{\operatorname{\mathbf{Log}}}
\newcommand{\K}{\operatorname{K}}
\newcommand{\mni}{\medskip\noindent}
\newcommand{\mbb}{\mathbb}
\newcommand{\NN}{\mbb{N}}
\newcommand{\ZZ}{\mbb{Z}}
\newcommand{\RR}{\mbb{R}}
\newcommand{\AAA}{\mbb{A}}
\newcommand{\mc}{\mathcal}
\newcommand{\mcF}{\mc{F}}
\newcommand{\mcG}{\mc{G}}
\newcommand{\mcM}{\mc{M}}
\newcommand{\mcN}{\mc{N}}
\newcommand{\mcO}{\mc{O}}
\newcommand{\mcZ}{\mc{Z}}
\newcommand{\mf}{\mathfrak}
\newcommand{\mb}{\mathbf}
\newcommand{\wht}{\widehat}
\newcommand{\whtOO}{\wht{\mathcal{O}}}
\newcommand{\wh}{\widehat}
\newcommand{\wt}{\widetilde}
\newcommand{\ol}{\overline}
\newcommand{\ul}{\underline}
\DeclareMathAlphabet{\mathpzc}{OT1}{pzc}{m}{it}
\DeclareSymbolFont{yhlargesymbols}{OMX}{yhex}{m}{n} 
\DeclareMathAccent{\yhwidehat}{\mathord}{yhlargesymbols}{"62}
\newcommand*{\da@rightarrow}{\mathchar"0\hexnumber@\symAMSa 4B }
\newcommand*{\da@leftarrow}{\mathchar"0\hexnumber@\symAMSa 4C }
\newcommand*{\xdashrightarrow}[2][]{%
  \mathrel{%
    \mathpalette{\da@xarrow{#1}{#2}{}\da@rightarrow{\,}{}}{}%
  }%
}
\newcommand{\xdashleftarrow}[2][]{%
  \mathrel{%
    \mathpalette{\da@xarrow{#1}{#2}\da@leftarrow{}{}{\,}}{}%
  }%
}
\newcommand*{\da@xarrow}[7]{%
  \sbox0{$\ifx#7\scriptstyle\scriptscriptstyle\else\scriptstyle\fi#5#1#6\m@th$}%
  \sbox2{$\ifx#7\scriptstyle\scriptscriptstyle\else\scriptstyle\fi#5#2#6\m@th$}%
  \sbox4{$#7\dabar@\m@th$}%
  \dimen@=\wd0 %
  \ifdim\wd2 >\dimen@
    \dimen@=\wd2 %
  \fi
  \count@=2 %
  \def\da@bars{\dabar@\dabar@}%
  \@whiledim\count@\wd4<\dimen@\do{%
    \advance\count@\@ne
    \expandafter\def\expandafter\da@bars\expandafter{%
      \da@bars
      \dabar@ 
    }%
  }%
  \mathrel{#3}%
  \mathrel{%
    \mathop{\da@bars}\limits
    \ifx\\#1\\%
    \else
      _{\copy0}%
    \fi
    \ifx\\#2\\%
    \else
      ^{\copy2}%
    \fi
  }%
  \mathrel{#4}%
}
\title{\large S\MakeLowercase{aturated base change of toroidal morphisms}}
\thanks{2020 MSC:
14A21, 
14D06, 
14M25. 
}
\author{\large S\MakeLowercase{antai} Q\MakeLowercase{u}}
\date{\today}
\begin{document}

\begin{abstract}
    We show that saturated base change of a dominant toroidal morphism is also toroidal.
    For completeness, we give full details on equivalence between definitions
    regarding toroidal embeddings and toroidal morphisms in literature.
    Moreover, we show in detail the equivalence between toroidal embeddings and
    logarithmically regular log-varieties, respectively,
    toroidal morphisms and logarithmically smooth morphisms.  
\end{abstract}

\maketitle

\tableofcontents

\addtocontents{toc}{\protect\setcounter{tocdepth}{1}}


\section{Introduction}

We work over an algebraically closed field $\mbb K$ of characteristic zero unless stated otherwise.

\medskip

Toroidal varieties are first introduced in \cite{KKMB} to establish the semi-stable reduction theorem.
In the recent development of birational geometry,
techniques from toroidal geometry play an important role in many profound results.
For example, Caucher Birkar applies toroidal techniques 
in the proof of Borisov-Alexeev-Borisov conjecture in \cite{B-BAB};
more recently, \cite{birkar2023singularities} uses the toroidalisation method to complete 
the proof of Shokurov-M$^{\rm c}$Kernan conjecture on singularities on Fano type fibrations.

On the other hand, it is well-known to experts in logarithmic geometry 
that the study of toroidal varieties is essentially equivalent to the study of logarithmically 
smooth log-schemes; cf. \cite{Kato-toric, Ogus-log-geo}.
However, as techniques from logarithmic geometry are not widely familiar to birational geometors,
even many classical results in logarithmic geometry have not been applied to the toroidal setting,
and more recent advances in logarithmic geometry,
such as \cite{ATW-principalization, ATW20, ATW25}, remain largely unexplored in the toroidal context.

The connection between logarithmic geometry and toroidal geometry has been outlined 
in many literature, for example, \cite{Kato89, Kato-toric, denef2013remarkstoroidalmorphisms}.
Based on the solid literature in logarithmic geometry, the goals of this note are to
\begin{itemize}
     \item demonstrate that \emph{saturated base change} of a dominant toroidal morphism is also toroidal (by applying results in logarithmic geometry), i.e., Theorem~\ref{sat-base-change-intro},
     \item show that different definitions of toroidal embeddings and toroidal morphisms in literature are all equivalent, and
    \item provide complete proofs on equivalence between toroidal embeddings and logarithmically regular log-varieties, as well as, toroidal morphisms and logarithmically smooth morphisms.
\end{itemize}
Although these results are all well-known to experts in logarithmic geometry,
we hope this note can provide sufficient details to help the readers unfamiliar with the subject 
translate techniques from logarithmic geometry into applications of toroidal geometry,
particularly in contexts related to birational geometry.

\mni
{\textbf{\sffamily{Saturated base change of toroidal morphisms.}}}
We adopt the definitions of toroidal embeddings and toroidal morphisms
from \cite[\S II.1]{KKMB} and \cite[Definition 1.3]{weak:s-stable:reduction},
which characterise these notions as objects formal-locally isomorphic to toric objects.
By a \emph{strict toroidal embedding}, we mean a \emph{toroidal embedding without self-intersection}
as \cite[page 57]{KKMB}.
We refer readers to \S\ref{toroidal-geo-section} for the precise definitions.
The primary objective of this paper is to show the following result 
on saturated base change of dominant toroidal morphisms.

\begin{theorem}\label{sat-base-change-intro}
	Let $(U_X\subset X)$, $(U_Y\subset Y)$ and $(U_Z\subset Z)$ be strict toroidal embeddings.  Let 
    \[ f\colon (U_Y\subset Y)\to (U_Z\subset Z)\,\text{ and }\,g\colon (U_X\subset X)\to (U_Z\subset Z) \]
	be morphisms of embeddings, that is, $f,g$ are morphisms of varieties such that 
    $f(U_Y)\subseteq U_Z$ and $g(U_X)\subseteq U_Z$.  Assume that 
	$g$ is a dominant toroidal morphism of toroidal embeddings.
    
	Denote by $W$ the normalisation of an irreducible
	component of $Y\times_Z X$ that dominates $Y$, and by $p\colon W\to Y$ and $q\colon W\to X$
	the induced projection morphisms respectively.
    \[\xymatrix{
	(U_W\subset W)\ar[r]^-q\ar[d]^p & (U_X\subset X)\ar[d]^g \\
	(U_Y\subset Y)\ar[r]^-f & (U_Z\subset Z)
	}\]
	Let 
    \[ U_W \coloneqq p^{-1}(U_Y)\cap q^{-1}(U_X). \]
    Then $U_W$ is a nonempty open subset of $W$, and $(U_W\subset W)$ is also a strict toroidal embedding.
    Moreover, the induced morphism of embeddings
    \[ p\colon (U_W\subset W)\to (U_Y\subset Y) \]
    is a dominant toroidal morphism of toroidal embeddings.
\end{theorem}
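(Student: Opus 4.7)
The plan is to translate the statement into logarithmic geometry via the equivalences—established elsewhere in this note—between strict toroidal embeddings and fs log-regular log-varieties, and between dominant toroidal morphisms and log-smooth morphisms of such (whose underlying morphism of varieties is dominant). In the log-geometric language, the result becomes the assertion that the \emph{saturated} fiber product of a log-smooth morphism $g$ against an arbitrary morphism $f$ of fs log schemes is again log-smooth over the base. This is the precise content of the phrase ``saturated base change,'' and its stability under base change in the fs category is a classical result (Kato, Ogus).

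\textbf{Steps.} First, I would equip $X$, $Y$, $Z$ with the canonical divisorial log structures associated to the boundaries $X\setminus U_X$, $Y\setminus U_Y$, $Z\setminus U_Z$. By the equivalences recalled above, these become fs log-regular log-varieties; the hypothesis that $f,g$ preserve the open parts makes them morphisms of fs log schemes; and $g$ becomes a log-smooth morphism whose underlying morphism of varieties is dominant. Second, I would form the saturated fiber product $\widetilde{W} := Y\times^{\fs}_Z X$ in the category of fs log schemes and invoke the stability of log-smoothness under saturated base change to conclude that the induced projection $\widetilde p\colon \widetilde W\to Y$ is log-smooth, while $\widetilde q\colon \widetilde W\to X$ is a morphism of fs log schemes.

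Third, I would analyse the underlying scheme of $\widetilde W$ through \'etale (or formal) toric charts for $f$ and $g$: locally the saturated fiber product of affine toric log schemes corresponds, on underlying schemes, to the normalisation of the scheme-theoretic fiber product along the components that survive saturation. Globalising and using dominance of $g$, one identifies the underlying scheme of $\widetilde W$ with the disjoint union of the normalisations of those irreducible components of $Y\times_Z X$ that dominate $Y$. Hence the normalisation $W$ appearing in the statement is a connected component of $\widetilde W$, and the induced log structure on $W$ from $\widetilde W$ agrees with the divisorial one. Fourth, since $\widetilde p$ is log-smooth and $Y$ is log-regular, $\widetilde W$ is again log-regular by Kato's theorem, hence so is $W$; the triviality locus of its log structure equals $p^{-1}(U_Y)\cap q^{-1}(U_X) = U_W$, which is therefore a nonempty open subset. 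Translating back through the equivalence, $(U_W\subset W)$ is a strict toroidal embedding and $p\colon (U_W\subset W)\to (U_Y\subset Y)$ is a log-smooth morphism whose underlying map is dominant, i.e., a dominant toroidal morphism of toroidal embeddings.

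\textbf{Main obstacle.} The principal technical difficulty lies in the third step: identifying the underlying scheme of $Y\times^{\fs}_Z X$ with the disjoint union of normalisations of the dominating components of $Y\times_Z X$. This requires a careful local analysis via toric charts, tracking how the saturation of an amalgamated pushout of fs monoids corresponds, under $\spec \mbb K[-]$, to the normalisation of a fiber product of affine toric varieties; one must also verify that every irreducible component of $\widetilde W$ dominates $Y$ (this uses dominance of $g$ together with the local structure of log-smooth morphisms), and that the local identifications glue to the asserted global description. Once this structural description is in place, the remaining assertions of the theorem follow directly from standard facts in logarithmic geometry and from the equivalences recalled in the previous sections.
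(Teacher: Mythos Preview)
Your overall strategy coincides with the paper's: translate to logarithmic geometry, form the fs fibre product $\widetilde W=(Y,\mcM_Y)\times^{\fs}_{(Z,\mcM_Z)}(X,\mcM_X)$, use stability of log-smoothness under base change together with Kato's theorem that log-smooth over log-regular is log-regular, and then translate back.

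Where you diverge is in your ``main obstacle.'' You propose to identify the underlying scheme of $\widetilde W$ by a local analysis through toric charts, tracking how saturation of monoid pushouts corresponds to normalisation of toric fibre products and then gluing. The paper avoids this entirely. It instead invokes the general structural facts (e.g.\ \cite[III.2.1.5]{Ogus-log-geo}) that the underlying map $\widetilde W\to Y\times_Z X$ is \emph{finite} (closed immersion followed by a finite map), and that $\widetilde W$ is \emph{normal} because it is log-regular. Separately, since the log structures on $U_X,U_Y,U_Z$ are trivial, the open subscheme $U=U_Y\times_{U_Z}U_X$ sits as an open in $\widetilde W$ with trivial log structure; nonemptiness of $U$ comes from a short lemma on fibres of toroidal morphisms (the paper's Lemma~\ref{toroidal-fibres}). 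Given a component $W'\subset Y\times_Z X$ dominating $Y$, one takes the irreducible component $U_W$ of $U$ lying in $W'$, and the component of $\widetilde W$ containing $U_W$ is then finite and birational over $W'$, hence equal to its normalisation $W$. No chart computation is needed, and one does not need (or assert) the stronger claim that \emph{every} component of $\widetilde W$ arises this way. Finally, the equality $W^*=U_W$ is obtained by the two-sided inclusion $U_W\subseteq W^*\subseteq p^{-1}(U_Y)\cap q^{-1}(U_X)=U_W$, the middle inclusion coming from the fact that the projections are morphisms of log-schemes.

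In short: your plan is sound, but the step you flag as the principal difficulty dissolves once you use finiteness of $\widetilde W\to Y\times_Z X$ and normality of $\widetilde W$ in place of a chart-by-chart argument.
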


Keep the notation in Theorem~\ref{sat-base-change-intro}.
If $Y\times_Z X$ has a unique irreducible component dominating $Y$, we call this irreducible component
the \emph{main component} of $Y\times_Z X$.
In this case, let $W$ be the normalisation of the main component of $Y\times_Z X$.
Then we call the morphism of embeddings $p\colon (U_W\subset W)\to (U_Y\subset Y)$ the
\emph{saturated base change} of $g\colon (U_X\subset X)\to (U_Z\subset Z)$ with respect to 
$f\colon (U_Y\subset Y)\to (U_Z\subset Z)$.  
Thus, Theorem~\ref{sat-base-change-intro} says that the saturated base change 
of a dominant toroidal morphism between strict toroidal embeddings is also toroidal.
The counterpart to Theorem~\ref{sat-base-change-intro} for general toroidal embeddings,
which are not necessarily strict, is given in Theorem~\ref{sat-base-change}.


\mni
{\textbf{\sffamily{Equivalence of definitions of toroidal embeddings and morphisms.}}}
The definitions of toroidal embeddings and toroidal morphisms are not quite consistent across the literature.

\mni
\textbf{\sffamily{(I) The formal-local definitions.}} 
Toroidal embeddings are originally defined as 
varieties that are formal-locally isomorphic to toric varieties on \cite[page 54]{KKMB}.
More specifically, let $X$ be a variety (over $\mbb K$) and $U\subseteq X$ be an open subset of $X$.
We say that the embedding $(U\subset X)$ is \emph{toroidal} if for every closed point $x\in X$,
there exist a normal affine toric variety $W$ and a closed point $w\in W$ such that 
\begin{itemize}
    \item there is an isomorphism of $\mbb K$-algebras of completion of local rings
          \begin{equation}
              \phi\colon \wh{\mcO}_{X,x}\to \wh{\mcO}_{W,w}, \label{toroidal-iso-completion}
          \end{equation}
    \item $I\wh{\mcO}_{X,x}$ is mapped isomorphically to $J\wh{\mcO}_{W,w}$ by $\phi$, where 
                \begin{itemize}
                    \item [$\bullet$] $I$ is the ideal sheaf defining the reduced divisor $X\setminus U$,
                    \item [$\bullet$] $\mbb T_W$ is the torus of the toric variety $W$, and
                    \item [$\bullet$] $J$ is the ideal sheaf defining the reduced divisor $W\setminus \mbb T_W$.
                \end{itemize}
\end{itemize}
Similarly, a toroidal morphism is defined as a morphism of varieties that
is formal-locally isomorphic to a toric morphism of toric varieties; 
see \cite[Definition 1.3]{weak:s-stable:reduction}.
For the precise terminologies about toroidal embeddings and toroidal morphisms, 
we refer readers to \S\ref{toroidal-geo-section}.
In order to distinguish this type of definitions from the others, we call the definitions given above
the \emph{formal-local definitions} for toroidal embeddings and toroidal morphisms.

\mni
\textbf{\sffamily{(II) The \'etale-local definitions.}} 
Toroidal embeddings in \cite{weak-factor, ACP-tropical-curves} are defined
via \'etale neighbourhoods, that is, an embedding $(U\subset X)$ is called toroidal
if for every closed point $x\in X$, there is a diagram
\begin{equation}
    \begin{aligned}\label{toroidal-etale-comm-ngbh}
        \xymatrix{
          & V\ar[rd]^q\ar[ld]_p & \\
        X & & W
        }\end{aligned}
\end{equation}
where 
\begin{itemize}
    \item $p$ is an \'etale neighbourhood of $x$,
    \item $W$ is a normal toric variety with torus $\mbb T_W$,
    \item $q$ is an \'etale morphism, and
    \item $p^{-1}(U) = q^{-1}(\mbb T_W)$.
\end{itemize}
Similarly, a toroidal morphism is defined by compatible \'etale neighbourhoods 
of the toroidal embeddings in \cite[\S 5.3]{ACP-tropical-curves}.
We call these the \emph{\'etale-local definitions} for toroidal embeddings and toroidal morphisms.

\mni
\textbf{\sffamily{(III) Equivalence of definitions.}}
It is evident that the \'etale-local definitions for toroidal embeddings 
and toroidal morphisms imply their formal-local counterparts.
However, the converse direction is not immediate.  
Keep the notation in {\sffamily{(I)}} and {\sffamily{(II)}}.  Then \cite{Artin} shows that \eqref{toroidal-iso-completion}
induces a diagram of \'etale neighbourhoods $p\colon V\to U, q\colon V\to W$
as in \eqref{toroidal-etale-comm-ngbh}, whereas there is no guarantee that $p^{-1}(U)=q^{-1}(\mbb T_W)$.
Thus, the equivalence of these two types of definitions does not follow 
from Artin's algebraic approximation theory in a trivial manner.

\mni
\textbf{\sffamily{\quad(III.a) Equivalence for toroidal embeddings.}}
The implication from the formal-local definition
to the \'etale-local side for toroidal embeddings is sketchily proved in a footnote on \cite[page 195]{KKMB}. 
We provide a complete and detailed proof of this equivalence in \S\ref{toroidal-geo-section}, 
which has not been fully elaborated in literature.
The proof of this equivalence is elementary and does not dependent on logarithmic geometry.

\mni
\textbf{\sffamily{\quad(III.a') Log-regularity and toroidal embeddings.}}
For notions about logarithmic geometry, see \S\ref{log-geo-section}.  
Having established the equivalence between formal-local and \'etale-local definitions
for toroidal embeddings, we see that toroidal embeddings are \'etale-locally toric varieties.
It is very well-known that toric varieties are logarithmically regular (or \emph{log-regular} for short) 
in the category of log-varieties, hence toroidal embeddings are log-regular log-varieties.
Conversely, it is a classical result that log-regular log-varieties 
are formal-locally isomorphic to toric varieties; for example, see \cite[(3.2)]{Kato-toric}.
Thus, we get the following equivalence:
\begin{tcolorbox}[colback=white, ams align]
\label{equi-toroidal-log-reg-intro}
    \text{toroidal embeddings}\Longleftrightarrow \text{log-regular log-varieties}. \tag{\dag}
\end{tcolorbox}
\noindent
We present the proofs of this equivalence in \S\ref{log-reg-toroidal-equi-section};
see Theorem~\ref{toroidal-to-log-reg} and Theorem~\ref{toroidal-regular}.

\mni
\textbf{\sffamily{\quad(III.b) Equivalence for toroidal morphisms.}}
The verification of equivalence between formal-local and \'etale-local definitions
for toroidal morphisms is not  quite straight forward, and
it seems that arguments involving logarithmic geometry are unavoidable.
We show this equivalence along the approach mainly consisting of two steps:
\begin{itemize}
    \item [(A)] dominant formal-local toroidal morphisms are logarithmically smooth, and
    \item [(B)] logarithmically smooth morphisms admit \'etale-toric charts to toric morphisms.
\end{itemize}
Note that for toroidal morphisms, 
the existence of \'etale-toric charts to toric morphisms is exactly the \'etale-local definition; 
see \cite[\S 5.3]{ACP-tropical-curves}.
Then (A) and (B) together show the direction from the formal-local definition
for \emph{dominant} toroidal morphisms to the \'etale-local definition.
\begin{equation}\label{road-map}
    \text{Formal-local defn.}\overset{\text{(A)}}{\Longrightarrow} \text{Log-smoothness}
\overset{\text{(B)}}{\Longrightarrow} \text{\'Etale-toric charts} \Longleftrightarrow \text{\'Etale-local defn.} \tag{$\star$}
\end{equation}
We complete the proofs of (A) and (B) in \S\ref{etale-charts-toroidal-morphisms-section}.
More specifically, (A) is proved in Theorem~\ref{toroidal-to-sm-zar} for Zariski topology,
and in Theorem~\ref{toroidal-to-sm-et} for \'etale topology;
(B) is verified in Theorem~\ref{etale-chart-log-sm-thm}.

We note that the approach \eqref{road-map} has already been outlined 
in \cite[\S 3]{denef2013remarkstoroidalmorphisms}.  
Specifically, (A) is mentioned in \cite[Remark 3.2]{denef2013remarkstoroidalmorphisms},
and \cite[Proposition 3.3]{denef2013remarkstoroidalmorphisms} gives a self-contained proof of (B)
for varieties over arbitrary fields of characteristic zero without using logarithmic geometry.
Indeed, (B) is a standard result in logarithmic geometry; see \cite[Theorem (3.5)]{Kato89},
\cite[\S IV.3.3]{Ogus-log-geo} and \cite[12.3.37]{gabber2018foundationsringtheory}, etc.
The result (A) is taken for granted in \cite[Remark 3.2]{denef2013remarkstoroidalmorphisms},
which should be well-known to experts in logarithmic geometry.  In this paper, we fulfill the details
proving (A) in Theorem~\ref{toroidal-to-sm-zar};
I learn this proof from Martin Olsson.

\mni
\textbf{\sffamily{\quad(III.b') Log-smoothness and toroidal morphisms.}}
Recall that \'etale-local toroidal morphisms are automatically formal-locally toroidal, hence 
the road map \eqref{road-map} also shows the equivalence:
\begin{tcolorbox}[colback=white, ams align]
\label{equi-toroidal-log-sm-intro}
    \text{dominant toroidal morphisms}\Longleftrightarrow \text{logarithmically smooth morphisms}. \tag{\ddag}
\end{tcolorbox}

\mni
\textbf{\sffamily{(IV) Proof of Theorem~\ref{sat-base-change-intro}.}}
After establishing the equivalences \eqref{equi-toroidal-log-reg-intro} 
and \eqref{equi-toroidal-log-sm-intro}, Theorem~\ref{sat-base-change-intro} follows immediately from 
two classical results in logarithmic geometry:
\begin{itemize}
    \item log-smoothness is stable under base change, and
    \item if $p\colon W\to Y$ is a log-smooth morphisms of fine and saturated log-schemes and if $Y$ is log-regular, then $W$ is also log-regular.
\end{itemize}
We complete the proof of Theorem~\ref{sat-base-change-intro} in \S\ref{sat-base-change-section}.


\mni
{\textbf{\sffamily{Acknowledgements.}}}
I am deeply grateful to my postdoc mentor Caucher Birkar for his encouragement to pursue this paper. 
My sincere thanks go to Dan Abramovich for patiently answering my numerous questions on logarithmic geometry. 
I am also greatly indebted to Martin Olsson for insightful discussions related to this work 
and for suggesting the proof of Theorem~\ref{toroidal-to-sm-zar}. 
I extend my appreciation to Arthur Ogus for clarifying aspects of his monograph \cite{Ogus-log-geo}. 
I would also like to thank Jaros{\l}aw W{\l}odarczyk and Dingxin Zhang for helpful conversations on logarithmic geometry. 
I am thankful to Junchao Shentu and Heer Zhao for their careful reading of an earlier draft. 
Finally, I wish to thank Jia Jia for valuable suggestions regarding the exposition of this paper.


\section{Toroidal geometry}\label{toroidal-geo-section}

In literature, a \emph{toroidal variety} refers to a \emph{toroidal embedding} or a \emph{toroidal couple}.
The original notion is the so-called toroidal embedding introduced in \cite{KKMB},
while the terminology toroidal couple is usually used in birational geometry
as pairs and boundary divisors are more emphasised there; see \cite{birkar2023singularities}.
The two notions are usually used interchangeably in practice, hence in this section we introduce
both of the two notions, and in the subsequent sections of this paper, we will switch between toroidal embeddings 
and toroidal couples without mentioning.  For toroidal couples,
we adopt the notation from \cite{birkar2023singularities}.

\subsection{Schemes and varieties}\label{schemes-varieties-defn}
A \emph{scheme} is defined in the sense of \cite[page 74]{Hart}.
All the schemes we consider in this paper are Noetherian.
An \emph{algebraic scheme} (or an \emph{algebraic scheme over $\mbb K$})
refers to a separated and finite type scheme over $\spec \mbb K$.
A \emph{variety} is an irreducible and reduced algebraic scheme over $\mbb K$.
For a scheme $X$, denote by $X_{\red}$ the maximal reduced closed subscheme of $X$, called 
the \emph{reduction} of $X$.

\begin{lemma}\label{local-irreducibility}
    Let $X$ be an excellent, normal scheme and $x\in X$ a scheme-theoretic point.
    Then $\wh{\mcO}_{X,x}$ is an integral, normal local ring; in other words,
    $\wh{X} \coloneqq \spec \wh{\mcO}_{X,x}$ is an irreducible, normal scheme.
\end{lemma}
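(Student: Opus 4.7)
The plan is to verify that $\wh{\mcO}_{X,x}$ is a Noetherian normal local ring, and then to deduce integrality from locality. Set $A \coloneqq \mcO_{X,x}$; by hypothesis, $A$ is an excellent, normal, Noetherian local ring, hence an integrally closed integral domain.

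First, I would show that $\wh{A}$ is normal in the sense of Serre. By the definition of an excellent local ring, the completion map $A \to \wh{A}$ is regular, that is, flat with geometrically regular (and in particular geometrically normal) formal fibres. Serre's conditions $(R_1)$ and $(S_2)$ ascend along flat homomorphisms with geometrically normal fibres (see, e.g., Matsumura's \emph{Commutative Ring Theory}, Thm.~23.9), and $A$ satisfies both of them by virtue of being a normal Noetherian domain. Consequently $\wh{A}$ satisfies $(R_1)$ and $(S_2)$, and is therefore normal.

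Next, I would deduce that $\wh{A}$ is an integral domain. Normality in Serre's sense implies reducedness, so $\wh{A}$ embeds into its total ring of fractions $\prod_{i} K(\wh{A}/\mf{p}_i)$, where $\mf{p}_1,\dots,\mf{p}_n$ are the minimal primes of $\wh{A}$. Each primitive idempotent of this product satisfies a monic polynomial over $\wh{A}$ and hence lies in $\wh{A}$ by integral closedness. But $\wh{A}$ is a local ring, so the only idempotents in $\wh{A}$ are $0$ and $1$. This forces $n = 1$, and $\wh{A}$ is an integral domain. Together with normality from the previous step, we conclude that $\wh{X} = \spec \wh{A}$ is irreducible and normal.

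There is no serious obstacle in this argument; the essential input is the ascent of $(R_1)$ and $(S_2)$ along $A \to \wh{A}$, which is precisely what the excellence hypothesis guarantees. Worth emphasising in the write-up is that this hypothesis is not cosmetic: the classical pathology of normal (even regular) local rings whose completions are reducible arises exactly when excellence is dropped, so the excellence assumption on $X$ cannot be removed.
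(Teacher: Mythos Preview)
Your proof is correct. The approach differs slightly from the paper's: you first establish normality of $\wh{A}$ via ascent of Serre's conditions along the regular map $A\to\wh{A}$, and then deduce integrality from the elementary fact that a normal Noetherian local ring is a domain (your idempotent argument). The paper instead proves integrality first, by observing that a normal local ring is geometrically unibranch and invoking the EGA result that the completion of an excellent unibranch local ring is a domain, and then cites the ascent of normality separately. Your route is arguably more self-contained, since it avoids the unibranch terminology and reduces everything to Serre's criterion plus a one-line idempotent lift; the paper's route is shorter on the page because it delegates both steps to specific EGA references. Either way, the substantive input is the same: excellence guarantees regular formal fibres, which is exactly what makes normality pass to the completion.
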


\begin{proof}
    By \cite[Chap. 0, (23.2.1)]{EGA-IV-1}, $\mcO_{X,x}$ is geometrically unibranch.
    Then by \cite[Corollaire (7.6.3)]{EGA-IV-II}, $\wh{\mcO}_{X,x}$
    is an integral domain, hence $\wh{X}$ is irreducible.
    The normality of $\wh{\mcO}_{X,x}$ follows from \cite[Scholie (7.8.3)]{EGA-IV-II}.
\end{proof}


\subsection{Couples}\label{couples-defn}
A \emph{couple} $(X,D)$ consists of a variety $X$ and a reduced Weil divisor $D$ on $X$.
This is more general than the definition given in \cite[\S 2.19]{B-Fano} because we are not assuming $X$ to be normal 
nor quasi-projective.   We say a couple $(X, D)$ is \emph{projective} if the underlying variety $X$ 
is projective over $\spec \mbb K$.
Note that a couple is not necessarily a pair in the sense that we are not assuming 
$K_X+D$ to be $\RR$-Cartier; cf. \cite[\S 2.8]{B-Fano}.
For a couple $(X,D)$, if the underlying variety $X$ admits a morphism
$X\to S$ to an algebraic scheme $S$, we denote the couple as $(X/S, D)$ or $(X,D)\to S$.

\subsection{Morphisms of couples}\label{morphism-couple-defn}
A \emph{morphism} $(X, D)\to (Y, E)$ between couples is a morphism $f\colon X\to Y$
of varieties such that $f^{-1}(E)\subseteq D$.


\subsection{Toric varieties and toric couples}\label{toric-couples-defn}
We refer the readers to \cite{CLS:toric} for notation and general theory of toric varieties.
An \emph{affine toric variety $X$ of dimension $d$} is an affine variety containing 
an algebraic torus $\mathbb{T}_X\cong \mathbb{G}_m^d$ as
a Zariski open subset such that the action of $\mathbb{T}_X$ on itself
extends to an algebraic action of $\mathbb{T}_X$ on $X$.
By \cite[Theorem 1.3.5]{CLS:toric}, when the affine toric variety $X$ is normal, 
this is equivalent to giving a pair $(N_X, \sigma)$
where $N_X$ is a lattice of finite rank and $\sigma$ is a strongly convex rational polyhedral cone
in $N_X\otimes_{\ZZ}\RR$ such that $X =\spec \mbb K [\sigma^{\vee}\cap M_X]$,
where $M_X$ is the dual lattice of $N_X$ and $\sigma^{\vee}\subset M_X\otimes_{\ZZ} \RR$ is the dual cone of $\sigma$.
If $D$ is the \emph{toric boundary} of $X$, that is, $D$ is the reduction of complement 
of $\mathbb{T}_X$ in $X$, it is well-known that $(X, D)$ is log canonical and $K_X + D\sim 0$;
see \cite[Theorem 8.2.3]{CLS:toric}.
In this case, we say that $(X, D)$ is a \emph{toric couple}.

A \emph{toric morphism} of normal affine toric varieties $X$ and $Y$
is given by a linear map of lattices $\phi\colon N_X\to N_Y$
such that the image of the cone of $X$ under the extended map 
$\phi_{\RR}\colon N_X\otimes_{\ZZ}\RR \to N_Y\otimes_{\ZZ}\RR$
is contained in the cone of $Y$; see \cite[\S 3.3]{CLS:toric}.

\begin{lemma}\label{minimal-elements-lem}
    Let $s\in \NN$, and let $\Lambda$ be a set of sequences of natural numbers of length $s$, that is,
    every $\ell\in \Lambda$ is of the form $(n_1, \dots, n_s)$ with $n_j\in \NN$.
    Let $\preceq$ be the partial order on $\Lambda$ given by:
    for every $\ell, \ell'\in \Lambda$, we say $\ell\preceq \ell'$ if every component of $\ell'$ 
    is greater or equal than that of $\ell$.  
    Denote by $\Lambda_{\min}$ the set of all minimal elements of $(\Lambda, \preceq)$.
    Then $\Lambda_{\min}$ is a nonempty finite set, and
    for every $\ell\in \Lambda$, there exists an $\ell'\in \Lambda_{\min}$ such that $\ell'\preceq \ell$.
\end{lemma}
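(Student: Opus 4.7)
The plan is to establish both claims simultaneously by induction on the sequence length $s$. The base case $s = 1$ is immediate: any nonempty subset $\Lambda \subseteq \NN$ has a unique minimum $m$, and $\Lambda_{\min} = \{m\}$ does both jobs at once. So the essence of the argument lies in the inductive step, which splits naturally into two parts.

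For the domination statement, I would fix an arbitrary $\ell = (n_1, \ldots, n_s) \in \Lambda$ and observe that the set $\Lambda_{\ell} \coloneqq \{\ell' \in \Lambda : \ell' \preceq \ell\}$ is contained in the finite box $\prod_{j=1}^{s}\{0, 1, \ldots, n_j\}$, hence is itself finite. A partial order on a nonempty finite set always admits a minimal element, and any minimal element of $\Lambda_{\ell}$ is automatically minimal in $\Lambda$, because any strictly smaller element of $\Lambda$ would again lie in $\Lambda_{\ell}$. This simultaneously yields that $\Lambda_{\min}$ is nonempty and that every $\ell \in \Lambda$ dominates some element of $\Lambda_{\min}$.

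For the finiteness of $\Lambda_{\min}$, which is the main step, I would fix a reference minimal element $\ell_0 = (m_1, \ldots, m_s) \in \Lambda_{\min}$ produced above. Any other $\ell = (n_1, \ldots, n_s) \in \Lambda_{\min}$ must satisfy $n_j < m_j$ for at least one index $j$; otherwise $\ell_0 \preceq \ell$, forcing $\ell = \ell_0$ by minimality. Consequently,
\[
    \Lambda_{\min} \setminus \{\ell_0\} \;\subseteq\; \bigcup_{j=1}^{s}\bigcup_{k=0}^{m_j - 1} \Lambda_{\min}^{(j,k)}, \quad \text{where}\quad \Lambda_{\min}^{(j,k)} \coloneqq \{\ell \in \Lambda_{\min} : \ell_j = k\}.
\]
For each pair $(j, k)$, the set $\Lambda^{(j,k)} \coloneqq \{\ell \in \Lambda : \ell_j = k\}$ identifies with a subset of $\NN^{s-1}$ by dropping the $j$-th coordinate, and a quick check shows that each element of $\Lambda_{\min}^{(j,k)}$ remains minimal in $\Lambda^{(j,k)}$ after this identification. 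The inductive hypothesis then bounds each $\Lambda_{\min}^{(j,k)}$ as finite, and the right-hand side above becomes a finite union of finite sets.

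The only point requiring genuine care is the compatibility of minimality with the restriction to a coordinate hyperplane, which is routine but easy to misstate; the rest is purely combinatorial bookkeeping. The argument is essentially Dickson's lemma on the well-quasi-ordering of $\NN^s$.
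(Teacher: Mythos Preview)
Your argument is correct. The nonemptiness and domination part---passing to the finite downset $\{\ell' \in \Lambda : \ell' \preceq \ell\}$ and observing that its minimal elements are already minimal in $\Lambda$---matches the paper exactly. For finiteness the two proofs diverge: you induct on $s$, fixing a reference minimal element $\ell_0 = (m_1,\dots,m_s)$ and covering $\Lambda_{\min}\setminus\{\ell_0\}$ by the finitely many coordinate slices $\{\ell_j = k\}$ with $0\le k < m_j$, each of which embeds into the minimal elements of a subset of $\NN^{s-1}$; the paper instead assumes an infinite sequence $(\ell_i)_{i\in\NN}$ in $\Lambda_{\min}$ and argues that some single coordinate must strictly decrease along the entire sequence, a contradiction. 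Your route is the standard proof of Dickson's lemma and is fully self-contained; the paper's contradiction argument is shorter in outline but, as written, jumps from ``for each $i$ there exists $j$ with $\pr_j(\ell_{i+1}-\ell_i)\le -1$'' to ``there exists $j$ such that this holds for every $i$,'' a quantifier swap that would still require a pigeonhole pass to an infinite subsequence to be made rigorous.
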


\begin{proof}
    For every $\ell\in \Lambda$, let $\ell^{\preceq}$ be the subset of $\Lambda$
    \[ \ell^{\preceq} \coloneqq \Set{ u\in \Lambda \mid u\preceq \ell }. \]
    Denote by $\ell^{\preceq}_{\min}$ the set of all minimal elements of $\ell^{\preceq}$.
    As $\ell^{\preceq}$ is a finite set, $\ell^{\preceq}_{\min}$ is nonempty.
    For every $\wt{\ell}\in \ell^{\preceq}_{\min}$, if $\wt{\ell}$ is not a minimal
    element of $\Lambda$, then there exists a $v\in \Lambda$, which is not equal to $\wt{\ell}$,
    such that $v\preceq \wt{\ell}$.  This implies that $v\preceq \wt{\ell}\preceq \ell$, hence $v\in \ell^{\preceq}$.
    However, this shows that $\wt{\ell}$ is not minimal in $\ell^{\preceq}$ neither, 
    contradicting that $\wt{\ell}\in \ell^{\preceq}_{\min}$.  Hence 
    \[ \ell^{\preceq}_{\min}\subseteq \Lambda_{\min} \,\text{ for every }\,\ell\in \Lambda. \]
    In particular, we see that for every $\ell\in \Lambda$, 
    there exists an $\ell'\in \Lambda_{\min}$ such that $\ell'\preceq \ell$.

    Assume that $\Lambda_{\min}$ contains an infinite subset $\Set{\ell_i}_{i\in \NN}$.
    For every $1\le j\le s$, let $\pr_j\colon \ZZ^s\to \ZZ$ be the projection to the $j$-th component.
    As every element in $\Lambda_{\min}$ is minimal, for every $i\in \NN$, 
    there exists a $j$ such that
    $\pr_j(\ell_{i+1}-\ell_i)\le -1$.  Then
    there exists a $j$ such that
    \[ \pr_j(\ell_{i+1} - \ell_i)\le -1 \,\text{ for every }\,i\in \NN. \]
    Then $\Set{\pr_j(\ell_i)}_{i\in \NN}$ is a strictly decreasing sequence of natural numbers,
    which is clearly impossible.  Thus, we can conclude that $\Lambda_{\min}$ is finite.
\end{proof}


\begin{lemma}\label{toric-boundary-ideal}
    Let $X$ be a normal affine toric variety given by $(N,\sigma)$,
    and let $D$ be the toric boundary of $X$, which is defined by an ideal $I_D\subset \K[X]$.  
    Let $M$ be the dual lattice of $N$ and $\sigma^{\vee}\subseteq M_{\RR} \coloneqq M\otimes_{\ZZ}\RR$
    the dual cone of $\sigma$.
    Then
    \[ I_D = \chi^{m_1}\cdot \K[X] + \cdots + \chi^{m_r}\cdot \K[X], \]
    where $\chi^{m_1}, \dots, \chi^{m_r}$ are characters of $\mbb T_X$
    with $m_1,\dots, m_r\in \sigma^{\vee}\cap M$ 
    such that every $\chi^{m_i}$ vanishes on the whole $D$ for $i = 1,\dots, r$.
\end{lemma}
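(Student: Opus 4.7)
The plan is to use the $\mbb T_X$-action to reduce $I_D$ to a monomial ideal, and then extract finitely many generators via Lemma~\ref{minimal-elements-lem}.

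Write $\rho_1,\dots,\rho_s$ for the rays of $\sigma$ and $u_{\rho_i}\in N$ for their primitive generators. Since $D=\bigcup_i D_{\rho_i}$ is $\mbb T_X$-invariant, so is $I_D$, and the $\mbb T_X$-weight-space decomposition $\K[X]=\bigoplus_{m\in\sigma^{\vee}\cap M}\K\cdot\chi^m$ restricts to
\[ I_D=\bigoplus_{m\in S}\K\cdot\chi^m,\qquad S\coloneqq\{m\in\sigma^{\vee}\cap M:\chi^m\in I_D\}. \]
Using the valuative description $\mathrm{ord}_{D_{\rho_i}}(\chi^m)=\langle m,u_{\rho_i}\rangle$, the character $\chi^m$ lies in $I_D$ exactly when $\chi^m$ vanishes along every $D_{\rho_i}$, i.e.\ when $\langle m,u_{\rho_i}\rangle\ge 1$ for all $i=1,\dots,s$. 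In particular every element of $S$ already vanishes on the whole $D$, so it suffices to extract a finite generating subset of $I_D$ from $\{\chi^m:m\in S\}$.

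For that, I would introduce the valuation map
\[ v\colon \sigma^{\vee}\cap M\to \NN^s,\qquad v(m)=(\langle m,u_{\rho_1}\rangle,\dots,\langle m,u_{\rho_s}\rangle). \]
Since $\sigma=\cone(u_{\rho_1},\dots,u_{\rho_s})$, the dual cone is cut out precisely by the inequalities $\langle -,u_{\rho_i}\rangle\ge 0$; hence whenever $m,m'\in\sigma^{\vee}\cap M$ satisfy $v(m')\preceq v(m)$ componentwise, we have $m-m'\in\sigma^{\vee}\cap M$. Apply Lemma~\ref{minimal-elements-lem} to $\Lambda\coloneqq v(S)\subseteq\NN^s$ to produce a finite set $\Lambda_{\min}=\{\ell_1,\dots,\ell_r\}$ of minimal elements, and choose lifts $m_i\in S$ with $v(m_i)=\ell_i$. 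For any $m\in S$, the lemma supplies some $\ell_i\preceq v(m)$, whence $m-m_i\in\sigma^{\vee}\cap M$ and $\chi^m=\chi^{m_i}\cdot\chi^{m-m_i}\in\chi^{m_i}\cdot\K[X]$. Combined with the direct-sum description of $I_D$, this yields $I_D=\sum_{i=1}^r\chi^{m_i}\cdot\K[X]$, as desired.

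The argument is essentially routine toric combinatorics, so no step is a serious obstacle; the most delicate point is the first step, translating the $\mbb T_X$-invariance of $D$ into the monomiality of $I_D$ via the weight-space decomposition of $\K[X]$. Once this is in hand, Lemma~\ref{minimal-elements-lem} carries out exactly the combinatorial finiteness required to close the argument.
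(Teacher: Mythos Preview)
Your proof is correct and follows the same skeleton as the paper: first reduce $I_D$ to a monomial ideal via torus invariance, then invoke Lemma~\ref{minimal-elements-lem} to extract finitely many generators. The one difference is the map into $\NN^s$ to which the lemma is applied: the paper fixes monoid generators $\alpha_1,\dots,\alpha_s$ of $\sigma^{\vee}\cap M$ and records a coefficient vector $(n_1(m),\dots,n_s(m))$ in a chosen presentation $m=\sum_j n_j(m)\alpha_j$, whereas you use the valuation map $v(m)=(\langle m,u_{\rho_i}\rangle)_i$ coming from the rays of $\sigma$. Your choice is canonical and ties directly to the vanishing orders along the $D_{\rho_i}$, making the description $S=\{m:\langle m,u_{\rho_i}\rangle\ge 1\text{ for all }i\}$ transparent; the paper's map depends on the presentation. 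Either choice makes the key implication $\ell_i\preceq v(m)\Rightarrow m-m_i\in\sigma^{\vee}\cap M$ immediate, and the rest of the argument is identical.
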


\begin{proof}
    \emph{Step 1}.
    Note that $I_D$ is a $\mbb T_X$-invariant ideal of $\K[X]$.
    By \cite[Lemma 1.1.16]{CLS:toric}, we see that
    \[ I_D = \bigoplus_{\chi^m\in I_D} \mbb K\cdot \chi^m. \]
    Let $m\in M$ such that $\chi^m\in I_D$.
    By \cite[Proposition 4.1.2]{CLS:toric}, $m$ is non-negative on every ray generator of $\sigma$;
    see \cite[page 29]{CLS:toric} for the notion of \emph{ray generator}.
    As $\sigma$ is generated by its ray generators (see \cite[Lemma 1.2.15]{CLS:toric}),
    we can deduce that $m$ is non-negative on the whole $\sigma$;
    in other words, we have $m\in \sigma^{\vee}$.  Thus, we can conclude that
    \[ m\in \sigma^{\vee}\cap M \,\text{ for every }\, \chi^m\in I_D. \]
    Pick an arbitrary $\chi^m\in I_D$.  By \cite[Proposition 4.1.2]{CLS:toric},
    $\divisor(\chi^m)$ is a $\mbb T_X$-invariant divisor on $X$, hence $\supp (\divisor(\chi^m)) \subseteq \supp D$. 
    If $\supp (\divisor(\chi^m))$ is a proper subset of $\supp D$,
    then $V(I_D)\subsetneq \supp D$, contradicting the assumption that $I_D$ defines $D$.  Thus,
    \[ \supp (\divisor (\chi^m)) = \supp D \,\text{ for every }\,\chi^m\in I_D. \]

    \medskip
    
    \emph{Step 2}.
    By \cite[Proposition 1.2.17]{CLS:toric}, $\sigma^{\vee}\cap M$ is a finitely generated monoid.
    Denote by $\Set{\alpha_1, \dots, \alpha_s}$ a minimal set of generators of $\sigma^{\vee}\cap M$.
    Then every $m\in M$ with $\chi^m\in I_D$ has the form 
    \begin{equation}
        m = \sum_{j=1}^s n_j(m) \cdot\alpha_j \,\text{ with }\,n_j(m)\in \NN. \label{integer-coeffs}
    \end{equation}
    For every $\chi^m\in I_D$, we denote by
    \[ \ell(\chi^m) \coloneqq (n_1(m), \dots, n_s(m)) \]
    the sequence of natural numbers, which are coefficients in \eqref{integer-coeffs}, and let
    \[ \Lambda(I_D) \coloneqq \Set{ \ell(\chi^m) \mid \chi^m\in I_D}. \]
    For $m, m'\in I_D$, we say that $\ell(\chi^m)\preceq \ell (\chi^{m'})$ 
    if $n_j(m)\le n_j(m')$ for every $1\le j\le s$.
    It is evident that $(\Lambda(I_D), \preceq)$ is a partially ordered set.

    Denote by $\Lambda(I_D)_{\min}$ the set of all minimal elements of $(\Lambda(I_D), \preceq)$.
    Then by Lemma~\ref{minimal-elements-lem}, 
    \[ \Lambda(I_D)_{\min} = \Set{\ell(\chi^{m_1}), \dots, \ell(\chi^{m_r})} \] 
    is a nonempty finite set, 
    and for every $\chi^m\in I_D$, we have $\ell(\chi^{m_i}) \preceq \ell (\chi^m)$ 
    for some $\ell(\chi^{m_i})$.
    Then every $\chi^m\in I_D$ can be written as $\chi^{m_i}\cdot \chi^{\alpha}$
    for some $1\le i\le r$ and $\alpha\in\sigma^{\vee}\cap M$.
    Now for an arbitrary $f\in I_D$, recall that $f$ is a $\mbb K$-linear combination
    \[ f = \sum_{k=1}^t a_k\cdot \chi^{\beta_k}, \]
    where $a_k\in \mbb K$ and $\chi^{\beta_k}\in I_D$.  For every $\chi^{\beta_k}$, we have
    \[ \chi^{\beta_k} = \chi^{m_i}\cdot \chi^{\alpha_k} \]
    for some $\chi^{m_i}$ as above and some $\alpha_k\in \sigma^{\vee}\cap M$.
    Therefore, we can conclude that $I_D$ is the ideal generated 
    by $\chi^{m_1}, \dots, \chi^{m_r}$ as desired.
\end{proof}


\subsection{Toroidal couples}\label{rel-toroidal-couple}
Let $(X,D)$ be a couple.  
We say the couple is \emph{toroidal} at a closed point $x\in X$ 
if there exist (cf. \cite[II \S 1, Definition 1]{KKMB})
\begin{itemize}
    \item a \emph{normal} affine toric variety $W$, and
    \item a closed point $w\in W$ such that there is a $\mbb K$-algebra isomorphism
    \[ \phi\colon \wh{\mc{O}}_{X,x} \to \wh{\mc{O}}_{W,w} \]
    of completions of local rings so that the ideal of $D$ 
    is mapped to the ideal of the toric boundary divisor $C\subset W$, 
    that is, the reduction of complement of the torus $\mbb T_W$ in $W$;
\end{itemize}
see also \cite[Definition 1.2]{weak:s-stable:reduction} and \cite[\S 3.7]{birkar2023singularities}.
We call the data $\Set{(W,C),w}$ a \emph{local toric model of $\Set{(X,D),x}$}.
We say that the couple $(X,D)$ is \emph{toroidal} 
if it is toroidal at every closed point.
If $(X, D)$ is a toroidal couple, then $X$ is normal and Cohen-Macaulay;
see \cite[Lemma 3.8]{birkar2023singularities}.


\subsection{Embeddings and morphisms of embeddings}\label{embedding-morphism-defn}
Let $X$ be a scheme and $U$ an open subset of $X$ that is allowed to be equal to $X$.
We denote by $(U\subset X)$ the open immersion $U\hookrightarrow X$, 
and we call it an \emph{embedding}.  
A \emph{morphism of embeddings} $f\colon (U\subset X)\to (V\subset Y)$ is a morphism $f\colon X\to Y$
of schemes such that $f(U)\subseteq V$.


\subsection{Toroidal embeddings}
Let $(X, D)$ be a toroidal couple, and let $U_X\coloneqq X\setminus \supp D$, which is an open subset of $X$.
We call $(U_X\subset X)$ a \emph{toroidal embedding}; see \cite[page 54]{KKMB}. 
In particular, $U_X$ is smooth as $\mc{O}_{X,x}$ is regular if
and only if $\wh{\mc{O}}_{X ,x}$ is regular; cf. \cite[Chap. 0, Lemme (17.3.8.1)]{EGA-IV-1}.
We will use the notions toroidal couples and toroidal embeddings interchangeably
to be consistent with literature.
If the embedding $(U_X\subset X)$, or equivalently the couple $(X, D)$, 
is clear from the context, we just say that $X$ is a \emph{toroidal variety}.


\subsection{Strict toroidal embeddings and couples}
Let $(U\subset X)$ be a toroidal embedding, and let $(X, D)$ be the corresponding toroidal couple.  
We say that $(U\subset X)$ is a
\emph{strict toroidal embedding} (or a \emph{toroidal embedding without self-intersection}),
respectively, $(X, D)$ is a \emph{strict toroidal couple} (or a \emph{toroidal couple
without self-intersection})
if every irreducible component of $D$ is a normal variety;
see \cite[page 57]{KKMB}.  


\subsection{\'Etale-toric charts for strict toroidal embeddings}
The following existence result for \'etale-toric charts of strict toroidal embeddings is well-known,
which is outlined as a footnote on \cite[page 195]{KKMB}.  
We include the full details here for lack of direct references.
Note that to avoid confusions with the notion of \'etale-local charts for log-schemes 
in the setting of logarithmic geometry,
we call these charts \emph{strict \'etale-toric charts} in Definition~\ref{strict-chart-defn}
as they arise from local toric models.
This terminology is clearly not standard.

\begin{theorem}\label{strict-chart}
    Let $(U\subset X)$ be a strict toroidal embedding.
    Let $D$ be the reduced divisor that is the reduction of $X\setminus U$.
    Then for every closed point $x\in X$, and for every 
    local toric model $\Set{(W, C), w}$ of $\Set{(X, D), x}$, there exist 
    \begin{itemize}
        \item an open neighbourhood $U_x\subset X$ of $x$, and
        \item an \'etale morphism $\pi\colon U_x\to W$ such that 
        $U\cap U_x = \pi^{-1}(\mbb T_W)$, where $\mbb T_W$ is the torus of $W$.
    \end{itemize}
\end{theorem}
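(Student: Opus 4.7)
The plan is to combine Artin's algebraic approximation theorem with the strictness hypothesis to promote the formal isomorphism $\phi\colon \wh{\mcO}_{X,x}\to\wh{\mcO}_{W,w}$ to an \'etale morphism $\pi\colon U_x\to W$ defined on a Zariski open neighbourhood of $x$, and to verify the preimage condition on the toric boundary. The two key ingredients are Artin approximation and Lemma~\ref{local-irreducibility}.

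First I match the components of $D$ and $C$ using strictness. Let $D_1,\dots,D_r$ be the irreducible components of $D$ through $x$; by strictness each $D_i$ is normal, so Lemma~\ref{local-irreducibility} shows that $\wh{D_i}\subset \spec\wh{\mcO}_{X,x}$ is irreducible. Each irreducible component $C_j$ of $C$ through $w$ is normal as well, being a torus-invariant subvariety of a normal toric variety, so $\wh{C_j}$ is irreducible. Since $\phi$ sends $I_D\wh{\mcO}_{X,x}$ to $I_C\wh{\mcO}_{W,w}$, it induces a bijection between these sets of irreducible formal components; after reindexing, $r$ equals the number of $C_j$'s through $w$ and $\phi(I_{\wh{D_i}})=I_{\wh{C_i}}$ for every $i$.

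Next, by Lemma~\ref{toric-boundary-ideal}, fix characters $\chi^{m_1},\dots,\chi^{m_t}$ generating $I_C$, and extend them to a set $\chi^{n_1},\dots,\chi^{n_N}$ of $\mbb K$-algebra generators of $\mbb K[W]\cong \mbb K[\sigma^\vee\cap M]$. The elements $\phi^{-1}(\chi^{n_k})\in \wh{\mcO}_{X,x}$ satisfy the monomial relations presenting $\mbb K[W]$, and each $\phi^{-1}(\chi^{n_k})$ cuts out an effective formal Cartier divisor on $\spec\wh{\mcO}_{X,x}$ whose support is a union of the normal formal components $\wh{D_i}$. Using the faithful flatness of $\mcO_{X,x}\to \wh{\mcO}_{X,x}$, principality of ideals descends from $\wh{\mcO}_{X,x}$ to $\mcO_{X,x}$, so for each $k$ there is an effective Cartier divisor $E_k$ defined on a Zariski neighbourhood of $x$ whose completion at $x$ equals this formal Cartier divisor. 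Let $f_k$ be a local equation for $E_k$ on a common Zariski neighbourhood $U_x\ni x$. After shrinking $U_x$ and scaling the $f_k$ by appropriate units so as to preserve the monomial relations of $\mbb K[\sigma^\vee\cap M]$, the $f_k$ assemble into a morphism $\pi=(f_k)\colon U_x\to W$ with $\pi(x)=w$.

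Since $\wh{\pi}\colon \wh{\mcO}_{W,w}\to \wh{\mcO}_{X,x}$ differs from $\phi^{-1}$ only by units on the chosen monomial generators, it has invertible tangent map at $x$; hence $\pi$ is \'etale at $x$, and \'etale on $U_x$ after further shrinking. For the preimage condition, both $\pi^{-1}(C)$ and $D\cap U_x$ are reduced closed subschemes of $U_x$ whose irreducible components through $x$ are normal (the former by \'etale pullback of the normal $C_j$, the latter by strictness), and their completions at $x$ coincide by construction; Lemma~\ref{local-irreducibility}, applied to each normal component, then forces $\pi^{-1}(C)=D\cap U_x$ as reduced subschemes in a Zariski neighbourhood of $x$, which yields $U\cap U_x=\pi^{-1}(\mbb T_W)$. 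The main technical obstacle is the consistent choice of unit adjustments so that the $f_k$ satisfy the monomial relations; this is precisely where strictness — the normality of each $D_i$, which allows the formal Cartier divisors associated to the $\chi^{n_k}$ to be realised Zariski-locally on $X$ — is indispensable, and without it one would only obtain an \'etale neighbourhood chart rather than a Zariski chart.
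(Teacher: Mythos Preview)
Your proposal identifies the right ingredients but leaves the central step unproved. You write that ``after \dots\ scaling the $f_k$ by appropriate units so as to preserve the monomial relations of $\mbb K[\sigma^{\vee}\cap M]$, the $f_k$ assemble into a morphism,'' and at the end you yourself flag this as ``the main technical obstacle.'' It is exactly the heart of the argument, and you have not supplied it. If $\chi^{n_1},\dots,\chi^{n_N}$ generate $\mbb K[W]$ and you choose local equations $f_k$ for the corresponding Cartier divisors, then each monoid relation $\sum a_k n_k=\sum b_k n_k$ gives $\prod f_k^{a_k}=u\cdot\prod f_k^{b_k}$ for some unit $u\in\mcO_{X,x}^*$ depending on the relation; you must show these units can be absorbed simultaneously into the $f_k$. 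There is no reason this cocycle-type problem is solvable for an arbitrary initial choice of the $f_k$, and nothing in your outline addresses it. (Your opening reference to Artin approximation is also a red herring: you never use it, and as the paper's introduction explains, Artin approximation alone does not control the boundary condition.)

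The paper's proof avoids the unit-adjustment problem entirely by working with a basis of the free abelian group $M=(\sigma^{\vee}\cap M)^{\gp}$ rather than with monoid generators. Using \cite[Lemma~1, p.~60]{KKMB}, one has a surjection $\rho\colon M\to\CDiv(X,D;x)$; choosing local equations $f_1,\dots,f_d$ for $\rho(e_1),\dots,\rho(e_d)$ and then \emph{defining} $f^{\alpha}\coloneqq\prod f_i^{m_i}$ for $\alpha=\sum m_i e_i$ forces $f^{\alpha+\beta}=f^{\alpha}f^{\beta}$ tautologically, so the map $\chi^{\alpha}\mapsto f^{\alpha}$ is a well-defined algebra homomorphism $\K(W)\to\K(X)$ with no relations to check. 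When $w$ is a torus fixed point, $\rho$ is an isomorphism and this suffices; when $w$ is not fixed, the paper splits off a torus factor $W\cong U_{\tau,N_1}\times\mbb T_{N_2}$ and supplements the $f_i$ by general hyperplane sections $g_{t+1},\dots,g_d$ to account for the kernel of $\rho$. Your proposal does not distinguish these two cases, and the non-fixed-point case would require additional input beyond what you have sketched.
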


\begin{definition}[Strict \'etale-toric chart]\label{strict-chart-defn}
    Let $(U\subset X)$ be a strict toroidal embedding and $x\in X$ a closed point.
    The data consisting of $\pi\colon U_x\to W$ and $\Set{(W, C),w}$ as above is called a
    \emph{strict \'etale-toric chart} for $(U\subset X)$ near $x$.
\end{definition}

\begin{proof}[Proof of Theorem~\ref{strict-chart}]
    Let $\Set{(W, C),w}$ be a local toric model of $\Set{(X, D), x}$.
    As $W$ is a normal affine toric variety,
    it is given by the data $(N, \sigma)$ as in \S\ref{toric-couples-defn}. 
    Then $W =\spec \mbb K [\sigma^{\vee}\cap M]$,
    where $M$ is the dual lattice of $N$ and $\sigma^{\vee}\subset M_{\RR}$ is the dual cone of $\sigma$.
    Let $d \coloneqq \dim X$, then $M\cong \ZZ^d$ is isomorphic to the group of characters of $\mbb T_W$.  Denote by 
    \begin{equation}
        \phi\colon \wh{\mc{O}}_{W,w}\to \wh{\mc{O}}_{X,x} \label{iso-completion-starting-pf}
    \end{equation}
    the isomorphism of $\mbb K$-algebras that gives the local toric model. 

    \medskip

    In Steps 1 to 3, we construct strict \'etale-toric chart when $w\in W$ is a fixed point.

    \medskip

    \emph{Step 1}.
    Assume that $w\in W$ is a fixed point under the $\mbb T_W$-action.
    By \cite[Corollary 1.3.3]{CLS:toric}, such a fixed point is unique,
    $\dim \sigma = \dim N_{\RR}$, and $w$ corresponds to the cone $\sigma$
    in the orbit-cone correspondence \cite[Theorem 3.2.6]{CLS:toric}.
    Then by \cite[Lemma 1, page 60]{KKMB}, $\phi$ in \eqref{iso-completion-starting-pf} 
    induces an isomorphism of Abelian groups
    \[ \rho\colon  M\to \CDiv (X, D; x) \coloneqq \Set{\begin{array}{c}
    \text{group of Cartier divisors } \\ \text{at }x, \text{ supporting in }D
    \end{array} }. \]
    As $M\cong \ZZ^d$, take basis 
    $e_1,\dots,e_d$ of $M$, then $\Set{\rho(e_i)}_{i=1}^d$ 
    is a set of basis for $\CDiv (X, D; x)$.
    Fix a choice of local equation $f_i$ for every $\rho(e_i)$.
    Note that the image of $f_i$ in $\wh{\mcO}_{X,x}$ and $\phi(\chi^{e_i})$ differ by a unit in $\wh{\mc{O}}_{X,x}$,
    where $\chi^{e_i}$ is the character of $\mbb T_W$ corresponding to $e_i$.
    For an arbitrary $\alpha = \sum_{i=1}^d m_i e_i\in M$ with $m_i\in \ZZ$, set
    \[ f^{\alpha} \coloneqq \prod_{i=1}^d f_i^{m_i}. \]
    It is easy to see that the set of rational functions $\Set{f^{\alpha}} \subset \K(X)$ satisfies that
    \[ f^{\alpha+\beta} = f^{\alpha}\cdot f^{\beta} \text{ for any }\alpha,\beta\in M. \]
    By \cite[Proposition 1.2.17]{CLS:toric}, there is a finite set
    $\Set{\alpha_1,\dots, \alpha_s}$ generating the monoid $\sigma^{\vee}\cap M$.
    Then $\K[W]$ is generated  
    by characters $\chi^{\alpha_1}, \dots, \chi^{\alpha_s}$ as a $\mbb K$-algebra.
    Note that for every $j=1,\dots, s$, 
    the image $\rho(\alpha_j)$ is an effective Cartier divisor in $\CDiv(X, D;x)$ by \cite[Lemma 1, page 60]{KKMB}, 
    hence $f^{\alpha_1}, \dots, f^{\alpha_s}$ are regular functions 
    on an open neighbourhood $U_x$ of $x\in X$ (as they are defining equations
    of $\rho(\alpha_1), \dots, \rho(\alpha_s)$).
    Shrinking $U_x$ near $x$, we can assume that $U_x$ is affine.
    
    Recall that the function field $\K(W)$ of $W$ is equal to 
    \[ \mbb K (\chi^{e_1}, \chi^{e_2}, \dots, \chi^{e_d}). \]
    Thus, there is a well-defined homomorphism of $\mbb K$-algebras
    \[ \varphi^{\circ}\colon \K(W) \to \K(X), \,\,\chi^{\alpha}\mapsto f^{\alpha} \,\text{ for every }\,\alpha\in M, \]
    where $\K(X)$ is the function field of $X$.
    As $\K[W]$ is a subring of $\K(W)$, $\varphi^{\circ}$ restricts to an injective homomorphism of $\mbb K$-algebras
    \[ \varphi\colon \K[W]\to \K[U_x], \,\, \chi^{\alpha_j} \mapsto f^{\alpha_j} \,\text{ with }\,j=1,\dots, s, \]
    Let 
    \[ \psi\colon U_x\to W \] 
    be the morphism of varieties induced by $\varphi$.  Since $\varphi$ is injective, $\psi$ is dominant. 

    \medskip

    \emph{Step 2}.
    We show that $\psi$ is \'etale at $x$.
    Let $\mf{n}_w\subset \K[W]$ be the maximal ideal defining $w$.
    By \cite[Corollary 1.3.3]{CLS:toric}, $\mf{n}_w$ is the ideal generated by $\chi^{\alpha_1}, \dots, \chi^{\alpha_s}$.
    Denote by $\mf{m}_x\subset \K[U_x]$ the maximal ideal defining $x\in U_x$,
    then $\mf{m}_x$ consists of all the regular functions on $U_x$ vanishing at $x$.
    Again by \cite[Lemma 1, page 60]{KKMB}, $f^{\alpha_j}$ vanishes at $x$ for every $j=1\, \dots, s$.
    Thus, 
    \[ \varphi^{-1}(\mf{m}_x) = \mf{n}_w, \]
    which is equivalent to saying that $\psi(x) = w$.

    As $\chi^{\alpha_1}, \dots, \chi^{\alpha_s}$ generate $\mf{n}_w$,
    the images of $\chi^{\alpha_1}, \dots, \chi^{\alpha_s}$ in $\wh{\mcO}_{W,w}$ generate $\mf{n}_w\wh{\mcO}_{W,w}$.
    Since $\mcO_{W,w}\to \wh{\mcO}_{W,w}$ is injective, we also denote by $\chi^{\alpha_j}$ 
    the image of every $\chi^{\alpha_j}$ in $\wh{\mcO}_{W,w}$
    in the rest of the proof.  As $\phi$ in \eqref{iso-completion-starting-pf} is an isomorphism, 
    we see that $\phi(\chi^{\alpha_1}), \dots, \phi(\chi^{\alpha_s})$ generate $\mf{m}_x\wh{\mc{O}}_{X,x}$.
    By construction in Step 1, $f^{\alpha_j}$ and $\phi(\chi^{\alpha_j})$ differ by a unit in $\wh{\mc{O}}_{X,x}$
    for every $j=1, \dots, s$, say,
    \[ \phi(\chi^{\alpha_j}) = u_j\cdot f^{\alpha_j} \,\text{ with }\, u_j\in \wh{\mc{O}}_{X,x}^*. \]
    Then the set $\Set{f^{\alpha_1}, \dots, f^{\alpha_s}}$ also generates $\mf{m}_x\wh{\mc{O}}_{X,x}$.
    As $\mc{O}_{X,x}\to \wh{\mc{O}}_{X,x}$ is faithfully flat, $\mf{m}_x$
    is generated by $f^{\alpha_1}, \dots, f^{\alpha_s}$ by \cite[Theorem 7.5]{CA-Mat}.
    Thus, we can conclude that
    \[ \varphi (\mf{n}_w)\cdot \mcO_{X,x} = \mf{m}_x\mcO_{X,x}, \]
    which is equivalent to saying that $\psi\colon U_x\to W$ is unramified at $x$.
    In addition, recall that $\mc{O}_{W,w}\to \mc{O}_{X,x}$ is injective as $\psi\colon U_x\to W$ is dominant,
    then by \cite[Lemma 1.5, page 9]{etale-cohomology-weil}, we can deduce that $\psi$ is \'etale at $x$. 
    Up to shrinking $U_x$ near $x$, we can assume that $\psi\colon U_x\to W$ is an \'etale morphism
    of normal affine varieties. 

    \medskip

    \emph{Step 3}.
    We show that $\psi^{-1}(\mbb T_W) = U\cap U_x$ in this step.
    Note that the isomorphism $\wh{\varphi}\colon \wh{\mc{O}}_{W,w}\to \wh{\mc{O}}_{X,x}$ 
    of completion of local rings induced by $\varphi\colon \K [W]\to \K[U_x]$ 
    may not be the given isomorphism $\phi$ in \eqref{iso-completion-starting-pf}.  Let 
    \[ \phi_w\colon \wh{\mcO}_{W,w}\to \wh{\mcO}_{W,w} \] 
    be the isomorphism of completion of local rings, which is the composite
    of $\phi^{-1}$ and $\wh{\varphi}$.  Then
    \[ \phi_w(\chi^{\alpha_j}) = v_j\cdot \chi^{\alpha_j} \]
    for every $j=1,\dots, s$, where $v_j \coloneqq \phi^{-1}(u_j^{-1})\in \wh{\mcO}_{W,w}^*$.
    By Lemma~\ref{toric-boundary-ideal}, the ideal $I_C\subset \K[W]$ defining
    the toric boundary $C$ 
    is generated by $\chi^{m_1},\dots, \chi^{m_r}$, where $m_i\in \sigma^{\vee}\cap M$,
    and every $\chi^{m_i}$ vanishes on the whole $C$.  As the set $\Set{\alpha_1,\dots, \alpha_s}$ 
    generates $\sigma^{\vee}\cap M$, we see that
    \[ \phi_w(\chi^{m_i}) = \wt{v}_i\cdot \chi^{m_i} \]
    for every $i=1,\dots, r$, where $\wt{v}_i\in \wh{\mcO}_{W,w}^*$.
    Thus, $\phi_w$ maps the ideal $I_C\cdot \wh{\mcO}_{W,w}$ isomorphically to $I_C\cdot \wh{\mcO}_{W,w}$.
    Since $\phi$ in \eqref{iso-completion-starting-pf} maps 
    $I_C\cdot \wh{\mcO}_{W,w}$ isomorphically to $I_D\cdot \wh{\mcO}_{X,x}$,
    we can conclude that $\wh{\varphi}\colon \wh{\mcO}_{W,w}\to \wh{\mcO}_{X,x}$ 
    maps $I_C\cdot \wh{\mcO}_{W,w}$ isomorphically to the ideal $I_D\cdot \wh{\mcO}_{X,x}$.
    Again by \cite[Theorem 7.5]{CA-Mat}, the pullback $\psi^*I_C$ is equal to $I_D$ near $x$,
    hence up to shrinking $U_x$ near $x$, we get $\psi^{-1}(\mbb T_W) = U\cap U_x$.
    Thus, $\psi\colon U_x\to W$ is a strict \'etale-toric chart. 

    \medskip

    In Steps 4 and 5, we show that if $w\in W$ is not a fixed point, then $W$ has a torus factor
    locally around a general closed point in the orbit of $w$.

    \medskip

    \emph{Step 4}.
    Assume that $w\in W$ is not a fixed point of the $\mbb T_W$-action. 
    By the orbit-cone correspondence, there is a face $\tau$ of $\sigma$ such that $O(\tau) = \mbb T_W\cdot w$;
    see \cite[Theorem 3.2.6]{CLS:toric}.  For any other closed point $w'\in O(\tau)$,
    $\Set{(W, C),w'}$ is also a local toric model of $\Set{(X, D),x}$.  Thus,    
    we can assume that $w$ is a general closed point of $O(\tau)$.  Note that
    \[ \dim O(\tau) = d - \dim \tau. \]
    By \cite[Proposition 1.3, page 7]{oda-convex-bodies-toric}, 
    there is an $m\in \sigma^{\vee}\cap M$ such that $\tau = \sigma\cap H_m$,
    where $H_m \coloneqq \Set{u\in N_{\RR} \mid \langle m, u \rangle = 0} \subseteq N_{\RR}$.
    Also by \cite[Proposition 1.3]{oda-convex-bodies-toric}, $\tau$ is also a strongly convex
    rational polyhedral cone in $N_{\RR}$.
    Let $U_{\tau, N}$ be the normal affine toric variety associated to the data $(N, \tau)$,
    that is, $U_{\tau, N} = \spec \mbb K [\tau^{\vee}\cap M]$.  By \cite[Proposition 1.3.16]{CLS:toric}, 
    $\mbb K[\tau^{\vee}\cap M]$ is the localisation of $\mbb K[\sigma^{\vee}\cap M]$ at $\chi^m\in \K[W]$,
    hence $U_{\tau, N}$ is an affine open subset of $W$.
    
    It is evident that $m\in \sigma^{\vee}\cap M\cap \tau^{\perp}$.
    Let $V(\tau)$ be the Zariski closure of $O(\tau)$, which is a toric subvariety of $W$;
    see \cite[Proposition 3.2.7]{CLS:toric}.
    By \cite[page 53]{fulton-toric}, 
    \[ V(\tau) = \spec \mbb K [\sigma^{\vee}\cap M \cap \tau^{\perp}]. \]
    Thus, $\chi^m$ does not vanish identically on $V(\tau)$.
    Then $U_{\tau, N}$ contains an open subset of $O(\tau)$. 
    Since $w$ is a general closed point of $O(\tau)$, we see that $w\in U_{\tau, N}$.
    Furthermore, by the proof of \cite[Theorem 1.2.18]{CLS:toric},
    the torus $\mbb T_{U_{\tau, N}}$ of $U_{\tau, N}$ is also equal to $\spec \mbb K[M]$,
    that is, $\mbb T_{U_{\tau, N}} = \mbb T_W$. 
    Then $C_{\tau, N} \coloneqq C\cap U_{\tau, N}$ is the toric boundary of $U_{\tau, N}$.  
    Thus, $\Set{(U_{\tau, N}, C_{\tau, N}),w}$ is also a local toric model of $\Set{(X, D),x}$.
    Therefore, we can replace $W$ by $U_{\tau, N}$ and assume that $W$ is given by the data $(N, \tau)$. 

    \medskip

    \emph{Step 5}.
    Now we adapt the argument from \cite[page 41]{CLS:toric}.
    Set $t \coloneqq \dim \tau$.  Let $N_1\subseteq N$ be the smallest saturated sublattice containing the 
    generators of $\tau$, then $N/N_1$ is torsion free.  By \cite[Exercise 1.3.5]{CLS:toric},
    there is a sublattice $N_2\subseteq N$ such that $N = N_1\oplus N_2$.
    Note that $\rank N_1 = t$ and $\rank N_2 = d-t$.
    Denote by $M_1, M_2$ the dual lattices of $N_1, N_2$ respectively.
    Then $M = M_1\oplus M_2$.  It is evident that
    \[ \tau^{\vee}\cap M = (\tau^{\vee}\cap M_1)\oplus M_2, \]
    which can be written in terms of $\mbb K$-algebras as
    \[ \mbb K[\tau^{\vee}\cap M] = \mbb K[\tau^{\vee}\cap M_1]\otimes_{\mbb K} \mbb K[M_2]. \]
    Thus, we can conclude that
    \[ W = U_{\tau, N_1} \times_{\spec \mbb K} \mbb T_{N_2}, \]
    where $\mbb T_{N_2}$ is the torus associated to the lattice $N_2$, and $U_{\tau, N_1}$
    is the normal affine toric variety defined by the data $(N_1, \tau)$.
    Note that $\dim U_{\tau, N_1} = t$ and $\dim \mbb T_{N_2} = d-t$.
    By \cite[Corollary 1.3.3]{CLS:toric}, as $\tau$ has maximal dimension in $N_1$, 
    $U_{\tau, N_1}$ has a unique fixed point by which we denote $p_{\tau}$.
    Let $C_{\tau, N_1}$ be the toric boundary of $U_{\tau, N_1}$,
    that is, $C_{\tau, N_1}$ is the reduction of $U_{\tau, N_1}\setminus \mbb T_{N_1}$,
    where $\mbb T_{N_1}$ is the torus associated to the lattice $N_1$.  Then
    it is evident that
    \begin{equation}
        \mbb T_W = \mbb T_{N_1}\times_{\spec \mbb K} \mbb T_{N_2} \,\text{ and }\,
    C = C_{\tau, N_1}\times_{\spec \mbb K} \mbb T_{N_2}. \label{torus-and-boundary}
    \end{equation}
    Moreover, for any closed point $p'\in U_{\tau, N_1}$ that is not the fixed point $p_{\tau}$,
    and for any closed point $q\in \mbb T_{N_2}$, the orbit of $\Set{p'}\times_{\spec \mbb K} \Set{q}$
    under the $\mbb T_W$-action has dimension $>d-t$, hence for dimension reason, we have
    \[ O(\tau) = \Set{p_{\tau}}\times_{\spec \mbb K}\mbb T_{N_2}. \]
    We assume that $w\in O(\tau)$ is the point $\Set{p_{\tau}}\times_{\spec \mbb K} (1, 1,\dots, 1)$
    in the rest of the proof.

    \medskip

    In Steps 6 to 9, we construct strict \'etale-toric chart when $w\in W$ is not a fixed point.
    We keep the assumptions in Steps 4 and 5.

    \medskip
 
    \emph{Step 6}.
    In this step, we write out the maximal ideal $\mf{n}_w$ defining $w\in W$ explicitly.
    By \cite[Lemma 1, page 60]{KKMB}, there is a surjective homomorphism of Abelian groups
    \[ \rho\colon  M\to \CDiv (X,D; x). \]
    As $M = M_1\oplus M_2$, by \cite[Proposition 4.1.2]{CLS:toric}, 
    it is easy to see that $M_2$ is the kernel of $\rho$.
    Let $\Set{e_1, \dots, e_t}$ be a set of generators of $M_1$, and let $\Set{e_{t+1}, \dots, e_d}$
    be a set of generators of $M_2$. 
    Let $\chi^{e_1}, \dots, \chi^{e_t}$ be characters of $\mbb T_W$ corresponding to $e_1,\dots, e_t$.
    Fix a choice of local equation $f_i$ for every $\rho (e_i)$, $i=1,\dots, t$.
    Then $f_i$ and $\phi(\chi^{e_i})$ differ by a unit in $\wh{\mc{O}}_{X,x}$ for every $i=1,\dots, t$.
    As in Step 1, for every $\alpha = \sum_{i=1}^t m_i e_i \in M_1$ with $m_i\in \ZZ$, let
    \[ f^{\alpha} \coloneqq \prod_{i=1}^t f_i^{m_i}. \]
    Then the set $\Set{f^{\alpha}}_{\alpha\in M_1}$ also satisfies the relation that 
    $f^{\alpha + \beta} = f^{\alpha}\cdot f^{\beta}$
    for any $\alpha,\beta\in M_1$.
    Moreover, for every $\alpha\in M_1$, $f^{\alpha}$ is a defining equation 
    of the Cartier divisor $\rho(\alpha)$ at $x$.

    Let $\alpha_1, \dots, \alpha_s\in M_1$ be a set of generators of the monoid $\tau^{\vee}\cap M_1$.  Then
    by \cite[Corollary 1.3.3]{CLS:toric}, $p_{\tau}\in U_{\tau, N_1}$ is defined by the ideal 
    \begin{align*}
        I_{\tau, N_1} &\coloneqq \Set{ \chi^m \mid m\in (\tau^{\vee}\cap M_1)\setminus \Set{0} }
    \cdot \K[U_{\tau, N_1}] \\
     &= \chi^{\alpha_1}\cdot \K[U_{\tau, N_1}] + \cdots + \chi^{\alpha_s}\cdot \K[U_{\tau, N_1}].
    \end{align*}
    Then $O(\tau)$ is the closed subset of $W$ defined by the ideal 
    \[ I_{\tau, N} \coloneqq I_{\tau, N_1} \cdot \K[W]. \] 
    On the other hand, the point $(1,1, \dots, 1)\in \mbb T_{N_2}$ is defined by 
    the equations $\chi^{e_{t+1}} -1, \dots, \chi^{e_d} - 1$.  Then
    \[ \mf{n}_w \coloneqq \langle I_{\tau, N}, \chi^{e_{t+1}} -1, \dots, \chi^{e_d} - 1 \rangle \] 
    is the maximal ideal of $\K [W]$ defining $w\in W$.

    \medskip

    \emph{Step 7}.
    In this step, we construct a morphism of varieties $\varphi\colon U_x\to W$
    from an affine open subset $U_x$ of $x\in X$.
    We can assume that $X$ is affine, hence quasi-projective over $\spec \mbb K$.
    Let $H_{t+1}, \dots, H_d$ be sufficiently general hyperplane sections of $X$ passing through $x$
    so that every $H_j$ intersects $D$ transversally.
    If $X$ is embedded into a projective space $\mbb P$, we can take every $H_j$
    as the pullback of a general hyperplane of $\mbb P$.
    Then every $H_j$ is a Cartier divisor, hence we can take its defining equation $g_j$ with $j=t+1, \dots, d$.
    Let $I_D$ be the ideal defining $D$.  Then the closed point $x$ is defined by the maximal ideal
    \[ \mf{m}_x = \langle I_D, g_{t+1}, \dots, g_d \rangle\subset \K[X]. \]
    Recall that the function field $\K(W)$ of $W$ is equal to 
    \[ \mbb K (\chi^{e_1}, \dots, \chi^{e_t}, \chi^{e_{t+1}}, \dots, \chi^{e_d}). \]
    Define the homomorphism of $\mbb K$-algebras
    \[ \varphi^{\circ} \colon \K(W) \to \K(X) \text{ via } \chi^{\alpha}\mapsto f^{\alpha} 
    \text{ and } \chi^{e_j}\mapsto g_j + 1 \]
    for every $\alpha\in M_1$ and every $e_j$ with $j=t+1, \dots, d$.  As $\Set{\chi^{e_1}, \dots, \chi^{e_d}}$
    is a transcendence basis of the field $\K(W)$, we see that $\varphi^{\circ}$ is well-defined.
    Recall that $\Set{\chi^{\alpha_1}, \dots, \chi^{\alpha_s}}$ 
    is a set of generators of the ideal $I_{\tau, N}$, then
    every $\chi^{\alpha_k}$ with $k=1, \dots, s$ is a regular function on $W$.
    Thus, by \cite[Lemma 1, page 60]{KKMB},
    every $\rho(\alpha_k)$ with $k=1, \dots, s$ is an effective Cartier divisor in $\CDiv(X, D; x)$.
    As $f^{\alpha_k}$ is a defining equation of $\rho(\alpha_k)$,
    there is an affine open neighbourhood $U_x$ 
    of $x$ so that every $f^{\alpha_k}$ with $k=1,\dots, s$
    is a regular function on $U_x$.  By restricting $\varphi^{\circ}$ to the subring $\K[W]$, 
    we get an injective homomorphism of $\mbb K$-algebras
    \[ \varphi\colon \K[W]\to \K[U_x]. \]
    Let $\psi\colon U_x\to W$
    be the morphism of normal affine varieties induced by $\varphi$.

    \medskip
    
    \emph{Step 8}. 
    In this step, we show that $\psi\colon U_x\to W$ is \'etale at $x$.
    By the same argument in Step 2,
    we see that
    \[ \varphi^{-1}(\mf m_x) = \mf n_w, \]
    hence $\psi(x) = w$.  Recall that $O(\tau)$ is defined by the ideal $I_{\tau, N}$ 
    that is generated by $\chi^{\alpha_1}, \dots, \chi^{\alpha_s}$, 
    and that $\mf n_w$ is the maximal ideal
    \[ \langle \chi^{\alpha_1}, \dots, \chi^{\alpha_s}, \chi^{e_{t+1}} - 1, \dots, \chi^{e_d} -1 \rangle. \]
    As $f_i$ and $\phi(\chi^{e_i})$ differ by a unit in $\wh{\mcO}_{X,x}$ 
    for every $i=1,\dots, t$ (see Step 6), $f^{\alpha_j}$ and $\phi(\chi^{\alpha_j})$
    also differ by a unit in $\wh{\mcO}_{X,x}$ for every $j=1,\dots, s$.
    Since $\phi$ in \eqref{iso-completion-starting-pf} maps the maximal ideal 
    $\mf n_w\wh{\mcO}_{W,w}$ of $\wh{\mcO}_{W,w}$ to the maximal ideal of $\wh{\mcO}_{X,x}$, we see that 
    \[ \Set{\phi(\chi^{\alpha_1}), \dots, \phi(\chi^{\alpha_s}), g_{t+1}, \dots, g_d} \]
    generates the maximal ideal of $\mf m_x\wh{\mcO}_{X,x}$.  Thus, the set
    \[ \Set{f^{\alpha_1}, \dots, f^{\alpha_s}, g_{t+1}, \dots, g_d} \]
    also generates $\mf m_x\wh{\mcO}_{X,x}$.  Again by \cite[Theorem 7.5]{CA-Mat}, we can conclude that
    \[ \varphi (\mf{n}_w) \cdot \mcO_{X,x} = \mf{m}_x\mcO_{X,x}. \]
    In other words, $\psi$ is unramified at $x$.
    Again by \cite[Lemma 1.5, page 9]{etale-cohomology-weil}, we see that $\psi$ is \'etale at $x$.
    Then up to shrinking $U_x$ near $x$, we assume $\psi\colon U_x\to W$ is \'etale. 

    \medskip

    \emph{Step 9}.
    In this step, we show $\psi^{-1}(\mbb T_W) = U\cap U_x$.
    Denote by 
    \[ \wh{\varphi}\colon \wh{\mc{O}}_{W,w}\to \wh{\mc{O}}_{X,x} \]
    the induced homomorphism of completion of local rings.
    Notice that as remarked in Step 3,
    $\wh{\varphi}$ may not be the given isomorphism 
    $\phi\colon \wh{\mc{O}}_{W,w}\to \wh{\mc{O}}_{X,x}$ in \eqref{iso-completion-starting-pf}.
    Keep the notation in Step 5.  
    By Lemma~\ref{toric-boundary-ideal}, the toric boundary $C_{\tau, N_1}$ of $U_{\tau, N_1}$
    is defined by a finite set of characters
    \[ \chi^{m_1}, \dots, \chi^{m_r} \,\text{ with }\, 
    m_1,\dots, m_r\in \tau^{\vee}\cap M_1\subseteq \tau^{\vee}\cap M, \]
    where every $\chi^{m_i}$ with $i=1,\dots, r$ vanishes on the whole $C_{\tau, N_1}$.
    Then by \eqref{torus-and-boundary}, the toric boundary $C$ of $W$ is also defined by 
    the characters $\chi^{m_1}, \dots, \chi^{m_r}$.  As in Step 3, let
    \[ \phi_w\colon \wh{\mcO}_{W,w}\to \wh{\mcO}_{W,w} \]
    be the composite of $\phi^{-1}$ and $\wh{\varphi}$.  Then the same argument in Step 3
    shows that $\phi_w$ maps the ideal $I_C\cdot \wh{\mcO}_{W,w}$ isomorphically to $I_C\cdot \wh{\mcO}_{W,w}$,
    hence the pullback $\psi^*I_C$ is equal to $I_D$ near $x$.
    Therefore, up to shrinking $U_x$ near $x$,
    we can conclude that $\psi^{-1}(\mbb T_W) = U\cap U_x$.
\end{proof}


\subsection{\'Etale-toric charts for toroidal embeddings}

\begin{lemma}[cf. \protect{\cite[Remark 2.4]{denef2013remarkstoroidalmorphisms}}]\label{pass-to-strict}
    Let $(U\subset X)$ be a toroidal embedding and $x\in X$ a closed point.
    Then there is an \'etale morphism $\pi\colon X'\to X$ onto an open neighbourhood of $x$ 
    such that $(\pi^{-1}(U)\subset X')$ is a strict toroidal embedding.
\end{lemma}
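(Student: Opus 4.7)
The plan is to use Artin's algebraic approximation theorem to algebraize the formal isomorphism coming from a local toric model into an \'etale chart, exploiting the observation that a normal affine toric variety is itself already a strict toroidal embedding.

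Fix a local toric model $\Set{(W, C), w}$ of $\Set{(X, D), x}$, where $D$ denotes the reduction of $X \setminus U$, together with the $\mbb K$-algebra isomorphism $\phi \colon \wh{\mcO}_{X,x} \to \wh{\mcO}_{W,w}$ carrying $I_D \wh{\mcO}_{X,x}$ onto $I_C \wh{\mcO}_{W,w}$. The embedding $(\mbb T_W \subset W)$ is itself strict toroidal, since every irreducible component of $C$ is a torus-orbit closure in the normal toric variety $W$ and hence is itself a normal toric variety by \cite[Proposition 3.2.7]{CLS:toric}. Applying \cite{Artin} to $\phi$ produces an affine scheme $X'$ with a closed point $x'$ and \'etale morphisms
\[ p \colon (X', x') \to (X, x) \quad \text{and} \quad q \colon (X', x') \to (W, w), \]
together with identifications $\wh{\mcO}_{X,x} \cong \wh{\mcO}_{X',x'} \cong \wh{\mcO}_{W,w}$ realizing $\phi$. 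Under these identifications, the ideals $p^*I_D$ and $q^*I_C$ extend to the same ideal of $\wh{\mcO}_{X',x'}$.

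By faithful flatness of $\mcO_{X',x'} \to \wh{\mcO}_{X',x'}$ (cf. \cite[Theorem 7.5]{CA-Mat}), this forces $p^*I_D = q^*I_C$ inside $\mcO_{X',x'}$, and Noetherianity lets us shrink $X'$ Zariski-locally around $x'$ so that $p^{-1}(D) = q^{-1}(C)$ as closed subschemes; equivalently, $p^{-1}(U) = q^{-1}(\mbb T_W)$. Since $\mbb K$ is algebraically closed and $q$ is \'etale, the completion $\wh{\mcO}_{X',y}$ at every closed point $y \in X'$ is canonically identified with $\wh{\mcO}_{W,q(y)}$, so $\Set{(W, C), q(y)}$ serves as a local toric model of $(X', q^{-1}(C))$ at $y$. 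Hence $(p^{-1}(U) \subset X')$ is a toroidal embedding. Moreover, every irreducible component of $q^{-1}(C)$ is \'etale over an irreducible component of $C$, which is normal, so the former is normal as well, confirming strictness.

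The main delicacy is invoking a version of Artin approximation strong enough to algebraize the specific isomorphism $\phi$---that is, so that the matching of $I_D$ and $I_C$ is transported to $\wh{\mcO}_{X',x'}$ rather than merely the abstract existence of some isomorphism of completions being asserted, which is the subtlety flagged in the introduction. Once this is in hand, the remaining steps---descent of the ideal equality via faithful flatness, Zariski shrinking, and stability of the formal-local toric model under \'etale pullback---are routine.
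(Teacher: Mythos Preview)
Your approach is genuinely different from the paper's and, once patched, actually proves more: it directly yields an \'etale-toric chart $q\colon X'\to W$ with $p^{-1}(U)=q^{-1}(\mbb T_W)$, which is the content of Theorem~\ref{general-chart}. The paper instead argues purely via henselisation: it shows that each height-one prime of $\mcO_{X,x}^h$ over $I_D$ extends to a prime of the completion (using that $\mcO_{X,x}^h$ is excellent henselian), identifies that completion with a component of $C$ on the toric side, deduces normality, and then descends from $\mcO_{X,x}^h$ back to an \'etale neighbourhood. No Artin approximation is used. Your route is shorter in spirit but leans on a more delicate invocation of \cite{Artin}; the paper's route is more elementary but requires the separate construction of strict charts in Theorem~\ref{strict-chart}.

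There is, however, a genuine gap in your argument as written. Artin approximation does \emph{not} produce \'etale neighbourhoods whose induced isomorphism of completions equals the given $\phi$; it only produces one congruent to $\phi$ modulo an arbitrarily high power of the maximal ideal. Consequently your sentence ``Under these identifications, the ideals $p^*I_D$ and $q^*I_C$ extend to the same ideal of $\wh{\mcO}_{X',x'}$'' does not follow: if $\psi\coloneqq(\wh{q^\#})^{-1}\circ\wh{p^\#}$ agrees with $\phi$ only mod $\mf m^c$, then $\psi(I_D\wh{\mcO}_{X,x})$ need not equal $I_C\wh{\mcO}_{W,w}$, since the error terms in $\mf m^c$ have no reason to lie in $I_C$. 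You flag this delicacy at the end but do not resolve it.

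The fix is to apply Artin's theorem not to the bare isomorphism but to a system of polynomial equations over $\mcO_{X,x}$ that \emph{includes} the ideal-matching as part of the data: take as unknowns the images of a generating set of $\K[W]$ together with auxiliary coefficients expressing each generator of $I_C$ (resp.\ $I_D$) as an $\mcO_{X',x'}$-linear combination of generators of $p^*I_D$ (resp.\ $q^*I_C$). The formal isomorphism $\phi$ supplies a formal solution, and any \'etale-local solution then gives $q^*I_C=p^*I_D$ in $\mcO_{X',x'}$ \emph{exactly}, even though the induced $\psi$ merely approximates $\phi$. With this correction your argument goes through; the remaining steps (faithful-flat descent, shrinking, normality via \'etale pullback of the normal components of $C$) are fine.
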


\begin{proof}
    Denote by $D$ the reduced divisor that is the reduction of $X\setminus U$.
    Let $\Set{(W,C),w}$ be a local toric model of $\Set{(X,D),x}$.
    Let $I_D\subset \mc{O}_{X,x}$ (respectively, $I_C\subset \mcO_{W,w}$) 
    be the ideal corresponding to the divisor $D$ (respectively, $C$).
    Denote by $\mc{O}_{X,x}^h$ the henselisation of $\mc{O}_{X,x}$.
    By \cite[Th\'eor\`eme (18.6.6)]{EGA-IV-4},
    $\mcO_{X,x}^h$ is also a Noetherian local ring,
    whose completion $\widehat{\mcO_{X,x}^h}$ is canonically isomorphic to $\wh{\mcO}_{X,x}$.
    Let $\mf{p}\subset \mc{O}_{X,x}^h$ be a prime ideal of height one
    containing $I_D\mc{O}_{X,x}^h$, that is, $\mf p$ corresponds to
    an irreducible component of the inverse image of $D$ in $\spec \mcO_{X,x}^h$.  
    We claim that $\mf{p}\cdot \widehat{\mcO_{X,x}^h}$ is a prime ideal of $\widehat{\mcO_{X,x}^h}$.
    By \cite[Corollaire (18.7.6)]{EGA-IV-4}, $\mcO_{X,x}^h$ is excellent as $\mcO_{X,x}$ is so.
    Then by \cite[Proposition (7.8.6)]{EGA-IV-II}, 
    $\mc{O}_{X,x}^h/\mf p$ is also excellent;
    in particular, $\mcO_{X,x}^h/\mf p$ has geometrically regular formal fibres.
    Moreover, by \cite[Proposition (18.6.8)]{EGA-IV-4}, 
    $\mcO_{X,x}^h/\mf p$ is also henselian.  Thus, by \cite[Corollaire (18.9.2)]{EGA-IV-4},
    we can conclude that $ \wh{\mcO_{X,x}^h}/\mf p\cdot \wh{\mcO_{X,x}^h}$ is integral,
    so $\mf{p}\cdot \widehat{\mcO_{X,x}^h}$ is a prime ideal, i.e., the claim holds.
    Now, since every irreducible component of
    the inverse image of $C$ in $\spec \wh{\mc{O}}_{W,w}$ is normal, and since
    \[ \yhwidehat{\mcO_{X,x}^h/\mf p} = (\mcO_{X,x}^h/\mf p)\otimes_{\mcO_{X,x}^h}\wh{\mcO_{X,x}^h}
    = \wh{\mcO_{X,x}^h}/\mf p\cdot \wh{\mcO_{X,x}^h} \cong \wh{\mcO}_{W,w}/\mf q \]
    for a prime ideal of height one of $\wh{\mcO}_{W,w}$ containing $I_C\wh{\mcO}_{W,w}$, we see that
    $\yhwidehat{\mc{O}_{X,x}^h/\mf{p}}$ is normal.
    Then by \cite[Scholie (7.8.3)]{EGA-IV-II}, 
    $\mc{O}_{X,x}^h/\mf{p}$ is also normal;  
    in other words, every irreducible component of the inverse image of $D$
    to $\spec \mc{O}_{X,x}^h$ is normal.
    By construction of henselisation (see \cite[\S 18.6]{EGA-IV-4}), there is an \'etale neighbourhood 
    $\pi\colon X'\to X$ of $x$ such that $(\pi^{-1}(U)\subset X')$ is a strict toroidal embedding.  
\end{proof}


\begin{theorem}\label{general-chart}
    Let $(U\subset X)$ be a toroidal embedding, and
    let $D$ be the reduced divisor that is the reduction of $X\setminus U$.
    Then for every closed point $x\in X$, and for every local toric model 
    $\Set{(W, C), w}$ of $\Set{(X, D), x}$, there exist 
    \'etale morphisms $p\colon V\to X$ and $q\colon V\to W$
    \[\xymatrix{
      & V\ar[rd]^q\ar[ld]_p \\
    X &  & W 
    }\]
    such that $p^{-1}(U) = q^{-1}(\mbb T_W)$ on $V$, and $(p^{-1}(U)\subset V)$ is a strict toroidal embedding.
\end{theorem}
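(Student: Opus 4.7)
The plan is to reduce to the strict case already handled by Theorem~\ref{strict-chart}, using the passage-to-strict Lemma~\ref{pass-to-strict}.

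First, apply Lemma~\ref{pass-to-strict} to the toroidal embedding $(U\subset X)$ at the closed point $x$. This yields an \'etale morphism $\pi\colon X'\to X$ onto an open neighbourhood of $x$ such that $(U'\subset X')$ is a strict toroidal embedding, where $U'\coloneqq \pi^{-1}(U)$. Pick any closed point $x'\in X'$ lying over $x$. Let $D'$ be the reduced divisor on $X'$ that is the reduction of $X'\setminus U'$; by construction $D' = \pi^{-1}(D)_{\red}$ near $x'$.

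Next, I claim $\Set{(W,C),w}$ is also a local toric model of $\Set{(X',D'),x'}$. Since $\mbb K$ is algebraically closed and $x,x'$ are closed points, both residue fields equal $\mbb K$, and $\pi$ \'etale implies the induced homomorphism $\wh{\mc O}_{X,x}\to \wh{\mc O}_{X',x'}$ is an isomorphism of $\mbb K$-algebras (cf.\ \cite[Chap.~0, (17.6.3)]{EGA-IV-1}, or the standard fact that \'etale morphisms induce isomorphisms on completions at points with trivial residue field extension). Composing this isomorphism with the given local toric model isomorphism $\phi\colon \wh{\mc O}_{X,x}\to \wh{\mc O}_{W,w}$ produces an isomorphism $\wh{\mc O}_{X',x'}\to \wh{\mc O}_{W,w}$. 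Moreover this isomorphism sends the ideal of $D'$ to the ideal of $C$, because the ideal sheaf of $D'$ pulls back from that of $D$ (as $D$ is reduced and $\pi$ is \'etale, hence flat, so $\pi^{-1}(D)$ is already reduced near $x'$, cf.\ the argument in Lemma~\ref{pass-to-strict}), and that of $D$ maps to the ideal of $C$ under $\phi$.

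Now apply Theorem~\ref{strict-chart} to the strict toroidal embedding $(U'\subset X')$ at $x'$ with local toric model $\Set{(W,C),w}$. This yields an open neighbourhood $V\subseteq X'$ of $x'$ together with an \'etale morphism $q\colon V\to W$ such that $U'\cap V = q^{-1}(\mbb T_W)$. Define
\[ p\colon V\hookrightarrow X' \xrightarrow{\pi} X, \]
which is \'etale as the composite of the open immersion $V\hookrightarrow X'$ (\'etale) and $\pi$ (\'etale). Then
\[ p^{-1}(U) = \pi^{-1}(U)\cap V = U'\cap V = q^{-1}(\mbb T_W), \]
establishing the equality of the two preimages. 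Finally, $(p^{-1}(U)\subset V)$ is a strict toroidal embedding because it is the restriction to the open subscheme $V\subseteq X'$ of the strict toroidal embedding $(U'\subset X')$, and both the toroidal and strictness (normality of boundary components) properties are preserved under restriction to open subsets.

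The argument is essentially a bookkeeping assembly; the only mildly delicate point is verifying that the local toric model transfers from $x$ to $x'$, which rests on \'etale morphisms inducing isomorphisms on completed local rings at closed points (since the ground field is algebraically closed).
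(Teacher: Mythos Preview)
Your proof is correct and follows essentially the same approach as the paper: apply Lemma~\ref{pass-to-strict} to pass to a strict toroidal \'etale neighbourhood, observe that the local toric model transfers along the \'etale morphism (since completions at closed points over $\mbb K$ agree), and then invoke Theorem~\ref{strict-chart}. The paper's proof is slightly terser, asserting the transfer of the local toric model in a single clause, whereas you spell out the completion-isomorphism argument and the reducedness of the pulled-back boundary; but the structure is identical.
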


\begin{definition}[\'Etale-toric chart]
    Let $(U\subset X)$ be a toroidal embedding
    and $x\in X$ a closed point.
    The data consisting of $p\colon V\to X$, $q\colon V\to W$ and $\Set{(W, C), w}$ as above
    is called an \emph{\'etale-toric chart} for $(U\subset X)$ near $x$.
\end{definition}

\begin{proof}[Proof of Theorem~\ref{general-chart}]
    By Lemma~\ref{pass-to-strict}, there is an \'etale morphism $p\colon V\to X$
    onto an open neighbourhood of $x$ such that $(p^{-1}(U)\subset V)$ is a strict toroidal embedding.
    Let $v\in V$ be a closed point 
    mapping to $x$, and let $E$ be the reduced divisor that is the reduction of $V\setminus p^{-1}(U)$.
    Then $\Set{(W, C), w}$ is also a local toric model of 
    $\Set{(V, E), v}$ as $p$ is \'etale.  By Theorem~\ref{strict-chart},
    up to shrinking $V$ around $v$, there exists an \'etale morphism
    $q\colon V\to W$, which is a strict \'etale-toric chart for 
    $(p^{-1}(U)\subset V)$ near $v$.
\end{proof}


\subsection{Toroidal morphisms}\label{toroidal-morphisms}

Let $(X,D)$ and $(Y,E)$ be couples, 
and let $f\colon X\to Y$ be a morphism of varieties. 
Let $x\in X$ be a closed point and $y=f(x)$. We say $(X,D)\to (Y,E)$ 
is \emph{toroidal at $x$} if there exist local 
toric models $\Set{(W,C),w}$ and $\Set{(V,B),v}$ of 
$\Set{(X,D), x}$ and $\Set{(Y,E), y}$ respectively, 
and a toric morphism $g\colon W\to V$ of normal affine toric varieties 
so that we have a commutative diagram 
\begin{equation}
    \begin{aligned}\label{formal-defn-diagram}
    \xymatrix{
    \widehat{\mathcal{O}}_{{X},{x}}\ar[r] & \widehat{\mathcal{O}}_{{W},w}\\
    \widehat{\mathcal{O}}_{{Y},{y}} \ar[u] \ar[r] & \widehat{\mathcal{O}}_{{V},v} \ar[u]
    }\end{aligned}
\end{equation}
where the vertical maps are induced by the given morphisms $f$ and $g$, and the horizontal maps are 
isomorphisms induced by the local toric models; 
see \cite[Definition 1.3]{weak:s-stable:reduction}.
We say that the morphism
$f\colon (X, D) \to (Y, E)$ is \emph{toroidal} 
if it is toroidal at every closed point of $X$. 

\begin{lemma}\label{toroidal-morphism-couples}
    Let $(X,D)$ and $(Y,E)$ be couples, and let $f\colon X\to Y$ be a morphism of varieties. 
    If $f\colon (X, D)\to (Y, E)$ is a toroidal morphism, then $f\colon (X, D)\to (Y, E)$
    is a morphism of couples, that is, $f^{-1}(E)\subseteq D$; see \S\ref{morphism-couple-defn}.
\end{lemma}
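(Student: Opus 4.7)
The plan is to reduce the desired set-theoretic inclusion $f^{-1}(E)\subseteq D$ to the trivial analogous statement for the local toric models in the definition of toroidality. Since both sides are closed subsets of $X$, it suffices to verify the containment on closed points. Fix a closed point $x\in X$ with $y=f(x)\in E$, and pick local toric models $\Set{(W,C),w}$ of $\Set{(X,D),x}$ and $\Set{(V,B),v}$ of $\Set{(Y,E),y}$ together with a toric morphism $g\colon W\to V$ fitting into the commutative diagram~\eqref{formal-defn-diagram}. Because the right vertical arrow of \eqref{formal-defn-diagram} is a local homomorphism induced by $g$, the commutativity of the diagram forces $g(w)=v$.

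The heart of the argument is the elementary set-theoretic fact $g^{-1}(B)\subseteq C$. Indeed, by definition a toric morphism restricts to a group homomorphism of tori $\mbb T_W\to \mbb T_V$, so $\mbb T_W\subseteq g^{-1}(\mbb T_V)$, and taking complements gives $g^{-1}(B)\subseteq W\setminus \mbb T_W=C$.

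To transfer this conclusion back to $(X,D)$, I would use the criterion that for a closed point $x'$ on a variety $X'$ and a reduced divisor $D'$, one has $x'\in D'$ if and only if $I_{D'}\wh{\mc O}_{X',x'}$ is a proper ideal of $\wh{\mc O}_{X',x'}$; this is just the equivalence ``$I_{D'}\mc O_{X',x'}$ proper $\Leftrightarrow$ contained in the unique maximal ideal'', combined with the faithful flatness of $\mc O_{X',x'}\to\wh{\mc O}_{X',x'}$. By the definition of local toric model, the horizontal isomorphisms of~\eqref{formal-defn-diagram} carry $I_D\wh{\mc O}_{X,x}$ onto $I_C\wh{\mc O}_{W,w}$ and $I_E\wh{\mc O}_{Y,y}$ onto $I_B\wh{\mc O}_{V,v}$, so the criterion yields $x\in D\Leftrightarrow w\in C$ and $y\in E\Leftrightarrow v\in B$. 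Chaining these: $y\in E$ gives $v\in B$, so $w\in g^{-1}(B)\subseteq C$, so $x\in D$, as required.

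The argument is essentially bookkeeping; there is no serious obstacle. The only points deserving care are (i) observing that $g(w)=v$ is forced by the shape of diagram~\eqref{formal-defn-diagram}, and (ii) invoking faithful flatness of $\mc O_{X,x}\to\wh{\mc O}_{X,x}$ to convert membership of a closed point in a divisor into properness of the corresponding ideal in the completed local ring, which is the form in which the local toric model hypothesis is stated.
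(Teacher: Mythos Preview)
Your proof is correct and follows essentially the same approach as the paper: reduce to the toric case via the diagram~\eqref{formal-defn-diagram}, where the inclusion $g^{-1}(B)\subseteq C$ is immediate from the fact that toric morphisms restrict to torus maps. The paper's proof is a terse two sentences (``clear for toric couples'' and ``follows immediately from the diagram''); you have simply unpacked the second step by making explicit the faithful-flatness argument that translates divisor membership through the completed local rings, and by noting that $g(w)=v$ is forced by the shape of the diagram.
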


\begin{proof}
    The result is clear if $(X, D)$ and $(Y, E)$ are toric couples and if $X\to Y$
    is a toric morphism of toric varieties.  Then the result for general toroidal morphisms
    follows immediately from the diagram~\eqref{formal-defn-diagram}.
\end{proof}

Let $f\colon (X, D)\to (Y, E)$ be a toroidal morphism of couples, 
and let $U_X \coloneqq X\setminus \supp D$ and $U_Y \coloneqq Y\setminus\supp E$.
By Lemma~\ref{toroidal-morphism-couples}, $(U_X\subset X)\to (U_Y\subset Y)$
is a well-defined morphism of embeddings (see \S\ref{embedding-morphism-defn}), 
which we also denote it by $f$. 
Hence we call $f\colon (U_X\subset X) \to (U_Y\subset Y)$
a \emph{toroidal morphism of toroidal embedding}.

Similar to toroidal embeddings (see Theorem~\ref{general-chart}), toroidal morphisms
of toroidal embeddings also admit \'etale-toric charts, 
which will be treated in \S\ref{etale-charts-toroidal-morphisms-section}.


\section{Logarithmic geometry}\label{log-geo-section}

In this section, we fix our conventions about logarithmic geometry.
We follow standard literature for logarithmic geometry, such as
\cite{Kato89, Kato-toric, Ogus-log-geo, gabber2018foundationsringtheory}.
When studying logarithmic schemes in \'etale topology, we mainly refer to \cite{Ogus-log-geo}.
While \cite{gabber2018foundationsringtheory} also treats logarithmic schemes in
the usual Zariski topology, hence in this case, we refer to \cite{gabber2018foundationsringtheory}.

\subsection{Small \'etale sites and localisations}\label{et-Zar}
Let $X$ be a scheme.
We denote by $\Et (X)$ (respectively, by $\zar(X)$) the \emph{small \'etale site} of $X$
(respectively, the \emph{Zariski site} of $X$),
and by $X_{\et}$ (respectively, by $X_{\zar}$) 
the topos of sheaves on $\Et (X)$ (respectively, on $\zar(X)$); cf. \cite[Chap. 2]{Olsson-book}.
A \emph{geometric point of $X$} (or a \emph{geometric point of $\Et(X)$}) refers to 
a morphism $\spec k\to X$ of schemes from the spectrum of an algebraically closed field $k$.
If $x$ is a scheme-theoretic point of $X$, we
denote by $\ol{x}$ the geometric point $\spec \ol{k(x)}\to X$,
where $k(x)$ is the residue field of $X$ at $x$.

For a sheaf $\mc{F}$ on $\Et (X)$, denote by $\mc{F}^{\zar}$
the image of $\mc{F}$ via the projection 
\[ \varepsilon\colon X_{\et}\to X_{\zar} \]
from the \'etale to the Zariski topos.
Let $\ol{x}$ be a geometric point of $X$. 
We denote by $\mc{F}_{\ol{x}}$ the localisation of $\mc{F}$ at $\ol{x}$ in the \'etale topology,
and by $\mc{F}_x$ (more precisely, $\mc{F}^{\zar}_x$)
the localisation of $\mc{F}^{\zar}$ at $x$ in Zariski topology.
Hence $\mc{O}_{X,x}$ is the usual localisation of $X$ at $x$, while
$\mc{O}_{X, \ol{x}}$ (more precisely, $\mc{O}_{\Et (X), \ol{x}}$) is the strict henselisation of $\mc{O}_{X,x}$;
see \cite[\S I.4]{etale-cohomology}.


\subsection{Sheaf of Grothendieck groups}\label{sheaf-gp}

For the general theory of \emph{monoids}, we refer readers to \cite[Chap. I]{Ogus-log-geo}.
Let $X$ be a scheme and $\mcM$ a sheaf of monoids on $\Et(X)$.  
Then the \emph{sheaf of Grothendieck groups of $\mcM$} is the sheaf $\mcM^{\gp}$ associated to the presheaf
\[ U\mapsto \mc{M}(U)^{\gp}, \]
where $U\to X$ is an \'etale neighbourhood, and $\mcM(U)^{\gp}$ is the Grothendieck group of $\mcM(U)$; 
see \cite[page 10]{Ogus-log-geo} and \cite[\S 4.8.35]{gabber2018foundationsringtheory}.  
The sheaf of Grothendieck groups of a sheaf of monoids on 
the Zariski site $\zar(X)$ is defined in the same way.

Let $\mcM$ be a sheaf of monoids on $\Et(X)$ (respectively, on $\zar(X)$), then there is a canonical morphism
of sheaves of monoids
\[ \lambda_{\mcM}\colon \mcM\to \mcM^{\gp}. \]
For $U\to X$ in $\Et(X)$ (respectively, in $\zar(X)$), 
the canonical homomorphism $\mcM(U)^{\gp}\to \mcM^{\gp}(U)$ is usually not an isomorphism;
see the proof of \cite[II.1.1.3]{Ogus-log-geo}.


\subsection{Log-structures}

Let $X$ be a scheme.  A \emph{prelog-structure} on $\Et(X)$ (respectively, on $\zar(X)$) 
is a homomorphism of sheaves of monoids
\[ \alpha\colon \mcM\to \mcO_{\Et(X)} \,\text{ (respectively, } \alpha\colon \mcM \to \mc{O}_X), \]
where the monoid structure on $\mcO_{\Et(X)}$ (respectively, on $\mcO_X$)
is given by the multiplications of local sections.
A \emph{log-structure} on $\Et(X)$ (respectively, on $\zar(X)$) is a prelog-structure $\alpha$ as above such that
\[ \alpha^{-1}(\mc{O}_{\Et(X)}^*) \to \mc{O}_{\Et (X)}^* \,\,(\text{respectively, } \alpha^{-1}(\mcO_X^*)\to \mcO_X^*)\]
is an isomorphism; see \cite[\S III.1.1]{Ogus-log-geo}.
The log-structure is usually denoted by $\mc{M}$ without mentioning the homomorphism $\alpha$,
which should not lead to confusions in the context.  
If the topology of $X$ (that is, $\Et(X)$ or $\zar(X)$) 
is clear from the context, we just say that $\mcM$ is a log-structure on $X$.
Moreover, for a scheme $X$, the inclusion 
\[ \mc{O}_{\Et (X)}^* \hookrightarrow \mc{O}_{\Et (X)} \,\text{ (respectively, }
\mcO_X^*\hookrightarrow \mcO_X) \]
is called the \emph{trivial log-structure} on $\Et(X)$ (respectively, on $\zar(X)$).  
We always give $\spec \mbb K$ the trivial log-structure.


\subsection{Log-schemes and log-morphisms}\label{log-scheme-defn}

A \emph{log-scheme} $(X, \mc{M})$ is a scheme $X$ endowed with a log-structure
$\mc{M}$ on $\Et(X)$ or $\zar(X)$; see \cite[\S III.1.2]{Ogus-log-geo} and \cite[\S 12.2]{gabber2018foundationsringtheory}.
If the underlying scheme $X$ of a log-scheme $(X, \mc{M})$
is variety, we call $(X, \mc{M})$ a \emph{log-variety}.
For the definition of \emph{integral} or \emph{saturated} log-structures, see \cite[page 185]{Ogus-log-geo}
(in particular, \cite[II.1.1.3]{Ogus-log-geo}).

For definition of \emph{morphisms of log-schemes}, see \cite[\S III.1.2]{Ogus-log-geo}.  
When emphasising that the morphism belongs to the category of log-schemes, 
we also call a morphism of log-schemes a \emph{log-morphism}.
If 
\[ f\colon (X, \mc{M}_X)\to (Y, \mc{M}_Y) \] 
is a log-morphism of log-schemes,
we usually denote by 
\[ \ul{f}\colon X\to Y \] 
the morphism of the underlying schemes.

\begin{definitionnotation}
    Denote by 
    \[ \logsch_{\et} \,\,(\text{respectively, } \logsch_{\zar}) \] 
    the category of log-schemes whose log-structures are defined on small \'etale sites 
    (respectively, on Zariski sites). The categories $\logsch_{\et}$ and $\logsch_{\zar}$ admit fibre products; 
    see \cite[III.2.1.2]{Ogus-log-geo}.
\end{definitionnotation}

\begin{definitionnotation}
    Recall that algebraic schemes are defined over the algebraically closed field $\mbb K$ of characteristic zero.
    We always equip $\spec \mbb K$ with the trivial log-structure.
    Denote by
    \[ \loget \,\,(\text{respectively, }  \logzar) \]
    the category of log-schemes $(X, \mcM)$, where $X$ is an algebraic scheme over $\mbb K$, and
    $\mcM$ is a log-structure on $\Et(X)$ (respectively, on $\zar(X)$).
\end{definitionnotation}


\subsection{Zariski log-structures}\label{zariski-log-structures-defn-sec}

For \emph{direct image} and \emph{inverse image} of log-structures, see \cite[page 272]{Ogus-log-geo}
and \cite[\S 12.1.6]{gabber2018foundationsringtheory}.
Let $X$ be a scheme, and let $\varepsilon\colon X_{\et}\to X_{\zar}$ be the canonical projection of topoi;
see \S\ref{et-Zar}.  Let $\mcZ$ be a log-structure on $\zar(X)$. 
For every \'etale morphism $\pi\colon X'\to X$, let
\[ \mcM_{X'} \coloneqq \pi^*_{\log}(\mcZ) \]
be the inverse image of the log-structure $\mcZ$ on $\zar(X')$.  Then the assignment 
\[ X'\mapsto \mcM_{X'}(X') \]
defines a sheaf $\mcM$ on $\Et(X)$, and $\mcM\to \mcO_{\Et(X)}$ is a log-structure; 
see \cite[III.1.4.1 (1)]{Ogus-log-geo}.  We denote this log-structure on $\Et(X)$ by 
\[ \varepsilon^*_{\log}(\mcZ). \]
Note that $\varepsilon^*_{\log}(\mcZ)$ is denote by $\varepsilon^*(\mcZ)$ 
on \cite[page 3]{niziol06} and called the 
\emph{pullback of $\mcZ$}.

\begin{definition}[cf. \protect{\cite[page 3]{niziol06}}]\label{zar-log-structure}
    Let $(X, \mcM)$ be a log-scheme, where $\mcM$ is a log-structure on $\Et(X)$.
    We say that $\mcM$ is \emph{Zariski} if $\mcM = \varepsilon^*_{\log}(\mcM^{\zar})$,
    where $\mcM^{\zar}$ is the image of $\mcM$ via $\varepsilon\colon X_{\et}\to X_{\zar}$,
    that is, the restriction of $\mcM$ to $\zar(X)$.
\end{definition}


\subsection{Log-scheme associated to monoids}\label{log-monoids}

Let $P$ be a monoid and $e\colon P\to \mbb{K}[P]$ the monoid algebra; see \cite[\S I.3.1]{Ogus-log-geo}.
Denote by $\mathsf{A}_{\mbb K, P}$ (or $\mathsf{A}_P$ for simplicity) the log-scheme 
whose underlying scheme is $\AAA_P \coloneqq \spec (\mbb{K}[P])$
and the log-structure $\mc{M}_P$ on $\Et(\AAA_P)$ is induced by the homomorphism $P\hookrightarrow \mbb{K}[P]$;
in other words,
\[ \mathsf{A}_P = (\AAA_P, \mcM_P), \]
which is also denote by 
\[ \mathsf{A}_P \coloneqq \spec (P\to \mbb K[P]) \]
on \cite[page 275]{Ogus-log-geo}.
For more details of this construction, we refer readers to \cite[(1.5)]{Kato89},
\cite[\S III.1.2]{Ogus-log-geo} and \cite[\S 12.1.15]{gabber2018foundationsringtheory}.
We denote by $\mathsf{A}_P^{\zar}$ the log-scheme $(\AAA_P, \mcZ_P)$, 
where $\mcZ_P$ is the log-structure on $\zar(\AAA_P)$ induced by the homomorphism $P\hookrightarrow \mbb K[P]$.
Then $\mcZ_P$ is the restriction of $\mcM_P$ to $\zar(\AAA_P)$.
Moreover, by \cite[\S 12.2.2]{gabber2018foundationsringtheory},
\[ \mcM_P = \varepsilon^*_{\log}(\mcZ_P), \]
where $\varepsilon\colon \AAA_{P,\et}\to \AAA_{P,\zar}$ is the canonical projection of topoi.

Let $P$ be a fine monoid, and let $\mathsf{A}_P^*$ be the maximal open subset of $\AAA_P$
on which the log-structure of $\mathsf{A}_P$ is trivial; cf. \cite[III.1.2.8]{Ogus-log-geo}.
Then by \cite[page 280]{Ogus-log-geo}, $\mathsf{A}_P^*$ is the underlying scheme 
of $\mathsf{A}_{P^{\gp}}$, that is, the scheme $\AAA_{P^{\gp}} = \spec \mbb K[P^{\gp}]$.


\subsection{Charts for log-schemes and log-morphisms}
For \emph{charts of log-schemes}, see \cite[\S II.2.1, \S III.1.2]{Ogus-log-geo}.
For \emph{charts of log-morphisms}, see \cite[III.1.2.6]{Ogus-log-geo}.
Note that \cite[\S 12.1]{gabber2018foundationsringtheory} 
treats uniformly charts of log-schemes and log-morphisms
for small \'etale sites and Zariski sites.

For the definition of \emph{coherent} log-structures, see \cite[II.2.1.5]{Ogus-log-geo} and \cite[(2.1)]{Kato89}
for small \'etale sites; see \cite[12.1.17]{gabber2018foundationsringtheory} for both
small \'etale sites and Zariski sites.
Note that coherent log-structures are defined by using charts.
A log-structure is called \emph{fine} if it is coherent and integral; see \S\ref{log-scheme-defn}.

If a log-structure $\mcM$ on $\Et(X)$ or $\zar(X)$ for a scheme $X$ is coherent 
(respectively, integral, respectively, fine), we say that
$(X, \mcM)$ is a \emph{coherent log-scheme} (respectively, an \emph{integral log-scheme}, 
respectively, a \emph{fine log-scheme}).
If a log-structure $\mcM$ on $\Et(X)$ or $\zar(X)$ for a scheme $X$ is fine and saturated, we say that
$\mcM$ is \emph{fs} or $(X, \mcM)$ is an \emph{fs log-scheme}.

For fine log-structures on small \'etale sites, charts exist \'etale-locally, and they can be constructed 
by applying \cite[Lemma (2.10)]{Kato89}.
For fs log-structures on Zariski sites, charts exist Zariski-locally, and such charts can be
constructed by \cite[(1.6)]{Kato-toric}.
In general, \cite[12.1.35]{gabber2018foundationsringtheory} treats the existence of local charts
for fine log-structures on small \'etale sites and Zariski sites in a uniform way.

\begin{lemma}\label{trivial-fibre-product}
    Let $(X, \mcM_X), (Y, \mcM_Y), (Z, \mcM_Z)$ be log-schemes admitting log-morphisms
    \[ (X, \mcM_X)\to (Z, \mcM_Z)\,\text{ and }\,(Y, \mcM_Y)\to (Z, \mcM_Z), \]
    where the log-structures are defined on either small \'etale sites or Zariski sites.  Let
    \[ (W, \mcM_W) \coloneqq (X, \mcM_X)\times_{(Z, \mcM_Z)}(Y, \mcM_Y) \]
    be the fibre product in the category of log-schemes.  If the log-structures $\mcM_X, \mcM_Y, \mcM_Z$ are trivial,
    then $\mcM_W$ is also the trivial log-structure.
\end{lemma}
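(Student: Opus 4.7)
The plan is to verify that the scheme-theoretic fibre product $W_0 \coloneqq X\times_Z Y$, equipped with the trivial log-structure $\mcO_{W_0}^*\hookrightarrow \mcO_{W_0}$, satisfies the universal property of the fibre product in the ambient category of log-schemes (either $\logsch_{\et}$ or $\logsch_{\zar}$). Uniqueness of fibre products will then force the canonical isomorphism $(W,\mcM_W)\cong (W_0,\mcO_{W_0}^*)$, whence $\mcM_W$ is trivial.

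The key observation I would record first is that pullback preserves triviality of log-structures. Namely, for any morphism of schemes $\ul{h}\colon T\to X$, the prelog-structure $\ul{h}^{-1}\mcO_X^*\to \mcO_T$ factors through $\mcO_T^*$, so the associated log-structure is the trivial log-structure $\mcO_T^*\hookrightarrow \mcO_T$ on $T$. As a consequence, if $(X,\mcM_X)$ carries the trivial log-structure, giving a log-morphism $h\colon (T,\mcM_T)\to (X,\mcM_X)$ amounts to giving a morphism of underlying schemes $\ul{h}\colon T\to X$ together with a homomorphism of log-structures $\mcO_T^*\to \mcM_T$; but the latter datum is forced to be the canonical inclusion $\mcO_T^*\hookrightarrow \mcM_T$ which is already part of the log-structure on $T$, hence is uniquely determined. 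Thus for any log-scheme $(T,\mcM_T)$,
\[ \homo_{\logsch}((T,\mcM_T),(X,\mcO_X^*)) \;=\; \homo(T,X), \]
and analogously for $Y$, $Z$, and $W_0$.

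Applying this identification to all four schemes at once, I obtain
\[ \homo_{\logsch}((T,\mcM_T),(X,\mcO_X^*))\times_{\homo_{\logsch}((T,\mcM_T),(Z,\mcO_Z^*))}\homo_{\logsch}((T,\mcM_T),(Y,\mcO_Y^*)) \]
\[ =\; \homo(T,X)\times_{\homo(T,Z)}\homo(T,Y) \;=\; \homo(T,W_0) \;=\; \homo_{\logsch}((T,\mcM_T),(W_0,\mcO_{W_0}^*)), \]
which is precisely the universal property of the fibre product evaluated at an arbitrary test log-scheme $(T,\mcM_T)$. Hence $(W_0,\mcO_{W_0}^*)$ represents the fibre product functor in the category of log-schemes, and by uniqueness it is canonically isomorphic to $(W,\mcM_W)$, completing the proof.

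No genuine obstacle arises: the entire argument is a formal unwinding of the universal property combined with the elementary fact that the inverse image of a trivial log-structure is again trivial, and the reasoning applies verbatim in both $\logsch_{\et}$ and $\logsch_{\zar}$.
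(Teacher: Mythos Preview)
Your argument is correct, but it takes a different route from the paper's. The paper observes that the trivial monoid $\mathbf{e}$ (the one-element monoid) is a chart for each of $\mcM_X,\mcM_Y,\mcM_Z$, and then invokes the explicit construction of fibre products in \cite[III.2.1.2]{Ogus-log-geo}: a chart for $\mcM_W$ is given by the pushout $\mathbf{e}\oplus_{\mathbf{e}}\mathbf{e}=\mathbf{e}$, so $\mcM_W$ is trivial. Your approach instead bypasses the explicit construction entirely and verifies the universal property directly, using the (correct) observation that $\homo_{\logsch}((T,\mcM_T),(X,\mcO_X^*))=\homo(T,X)$ whenever the target carries the trivial log-structure. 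Your argument is slightly longer but more self-contained, as it does not require knowing how fibre products are built via charts; the paper's argument is a one-liner once that machinery is in hand.
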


\begin{proof}
    Let $\mb{e}$ be the trivial monoid, that is, the monoid consisting of a single element.
    Then $\mb e\to \mcO_{\Et(X)}$ is a chart for $\mcM_X$;
    similarly, $\mb e\to \mcO_{\Et(Y)}, \mb e \to \mcO_{\Et(Z)}$ are charts for $\mcM_Y, \mcM_Z$.
    It is evident that the induced $\mb e\to \mcO_{\Et(W)}$ is a chart for $\mcM_W$, hence $\mcM_W$ is also trivial.
    The proof for Zariski sites is the same.
\end{proof}


\begin{lemma}\label{formal-log-regular-schemes}
    Let $(X, \mcM)$ be an fs log-scheme in $\logzar$.
    Let $x\in X$ and $\wh{X}\coloneqq \spec \whtOO_{X,x}$.
    Denote by $\rho\colon \wh{X}\to X$ the canonical morphism, and by $\wh{\mcM} \coloneqq \rho^*_{\log}\mcM$.
    Then $(\wh{X}, \wh{\mcM})$ is an fs log-scheme in $\logsch_{\zar}$.
\end{lemma}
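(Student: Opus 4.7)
The plan is to reduce to the existence of local charts and then pull back such a chart to $\wh{X}$.

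First, I would invoke the existence of Zariski-local fs charts. Since $(X,\mcM)$ is an fs log-scheme in $\logzar$, by \cite[(1.6)]{Kato-toric} (or \cite[12.1.35]{gabber2018foundationsringtheory} specialised to the Zariski topology), there exist a Zariski open neighbourhood $U \subseteq X$ of $x$, a fine saturated monoid $P$, and a chart $P \to \mcO_X(U)$ for $\mcM|_U$. In particular, the log-structure $\mcM|_U$ is isomorphic to the log-structure associated to the prelog-structure $\ul{P} \to \mcO_U$, where $\ul P$ denotes the constant sheaf with value $P$ on $\zar(U)$.

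Next, I would verify that $\rho \colon \wh{X} \to X$ factors through $U$. The morphism $\rho$ factors as $\wh{X} \to \spec \mcO_{X,x} \to X$, and the image of $\spec \mcO_{X,x} \to X$ consists of the generalisations of $x$ in $X$. Since $U$ is a Zariski open neighbourhood of $x$, every generalisation of $x$ lies in $U$, so $\rho$ lands in $U$. Hence the chart $P \to \mcO_X(U)$ pulls back along $\rho$ to a homomorphism $P \to \whtOO_{X,x}$.

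Finally, I would use the functoriality of charts under pullback of log-structures; see \cite[III.1.2.6]{Ogus-log-geo} or \cite[\S 12.1.15]{gabber2018foundationsringtheory}. Explicitly, the inverse image log-structure $\rho^*_{\log}(\mcM|_U)$ on $\zar(\wh X)$ is the log-structure associated to the prelog-structure $\rho^{-1}(\ul P) \to \mcO_{\wh X}$, and the latter is (canonically identified with) $\ul P \to \mcO_{\wh X}$ coming from the composite $P \to \mcO_X(U) \to \whtOO_{X,x}$. Therefore $P \to \whtOO_{X,x}$ is a chart for $\wh{\mcM} = \rho^*_{\log}\mcM$. Since $P$ is fine and saturated, $\wh{\mcM}$ is an fs log-structure on $\zar(\wh X)$, whence $(\wh X, \wh{\mcM}) \in \logsch_{\zar}$ is fs.

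The only subtle point I anticipate is bookkeeping: confirming that Kato--Kato's Zariski chart existence applies in this exact generality (fs, Zariski, Noetherian local situation) and that the pullback of a chart in the Zariski topology gives a chart of the inverse image log-structure on $\zar(\wh X)$—both of which are standard, so no serious obstacle is expected.
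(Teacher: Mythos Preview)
Your proposal is correct and follows essentially the same approach as the paper: take a Zariski-local fs chart from \cite[(1.6)]{Kato-toric}, observe that $\rho$ factors through the chosen neighbourhood $U$, and pull back the chart to $\wh{X}$ to conclude that $\wh{\mcM}$ is fs. The paper phrases the pullback step via strictness of the log-morphisms $(\wh{X},\wh{\mcM})\to(U,\mcM|_U)\to\mathsf{A}_P$ and cites \cite[II.2.3.6]{Ogus-log-geo} for the final implication, but this is the same argument as yours.
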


\begin{proof}
    By \cite[(1.6)]{Kato-toric}, there exist a Zariski open neighbourhood $U$ of $x$, a sharp fs monoid $P$ and
    a homomorphism of monoids $\varphi\colon P\to \mcM|_U$,
    which is a chart for $\mcM$ on $U$.
    Note that $\rho$ factors through the open immersion $U\hookrightarrow X$.
    As the log-morphisms $(\wh{X}, \wh{\mcM})\to (U, \mcM|_U)$ 
    and $(U, \mcM|_U)\to \mathsf{A}_P$ are strict,
    we see that the induced homomorphism $P\to \wh{\mcO}_{X,x}$ is a chart for $\wh{\mcM}$.
    By \cite[II.2.3.6]{Ogus-log-geo}, $\wh{\mcM}$ is an fs log-structure on $\zar(\wh{X})$.
\end{proof}


\subsection{Comparison of charts in \'etale and Zariski sites}
The following result shows that an integral log-structure admitting a global chart must be Zariski 
(see Definition~\ref{zar-log-structure}),
hence we can always assume such log-structures are Zariski after passing to some \'etale neighbourhoods.

\begin{lemma}\label{et-to-zar-charts}
    Let $(X, \mcM)$ be an integral log-scheme, where $\mcM$ is a log-structure on $\Et(X)$.
    Let $a\colon (X, \mcM)\to \mathsf{A}_P$ be a chart for $(X, \mcM)$ subordinate to an integral monoid $P$.
    Then the log-structure $\mcM$ is Zariski.  Moreover, the restriction of $a$ to $\zar(X)$,
    \[ a^{\zar}\colon (X, \mcM^{\zar})\to \mathsf{A}_P^{\zar}, \]
    is a chart for $(X, \mcM^{\zar})$ subordinate to $P$.
\end{lemma}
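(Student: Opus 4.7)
The plan is to exhibit a Zariski log-structure $\mcN$ on $\zar(X)$ with $\varepsilon^*_{\log,X}\mcN \cong \mcM$, where $\varepsilon_X\colon X_{\et}\to X_{\zar}$ denotes the canonical projection of topoi, and then to identify $\mcN$ with $\mcM^{\zar}$. The chart $a$ corresponds to a morphism of schemes $\ul{a}\colon X\to \AAA_P = \spec\mbb K[P]$ together with an isomorphism of log-structures $\mcM\cong \ul{a}^*_{\log,\et}\mcM_P$, and by \S\ref{log-monoids} we have $\mcM_P = \varepsilon^*_{\log,\AAA_P}\mcZ_P$. A natural candidate is therefore $\mcN \coloneqq \ul{a}^*_{\log,\zar}\mcZ_P$, the Zariski inverse image of $\mcZ_P$ along the morphism of schemes $\ul{a}$.

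\textbf{Key identification.} The central step is the base-change compatibility
\[ \ul{a}^*_{\log,\et}\bigl(\varepsilon^*_{\log,\AAA_P}\mcZ_P\bigr) \cong \varepsilon^*_{\log,X}\bigl(\ul{a}^*_{\log,\zar}\mcZ_P\bigr), \]
which follows from functoriality of inverse image log-structures along the commutative square of morphisms of ringed topoi involving $X_{\et}$, $X_{\zar}$, $\AAA_{P,\et}$ and $\AAA_{P,\zar}$. Combined with the chart isomorphism $\mcM\cong \ul{a}^*_{\log,\et}\mcM_P$, this yields $\mcM\cong \varepsilon^*_{\log,X}\mcN$. Applying the direct image functor $(\cdot)^{\zar} = \varepsilon_{X,*}$ gives $\mcM^{\zar}\cong \varepsilon_{X,*}\varepsilon^*_{\log,X}\mcN$; I would then verify that the adjunction unit $\mcN\to \varepsilon_{X,*}\varepsilon^*_{\log,X}\mcN$ is an isomorphism, so that $\mcN\cong \mcM^{\zar}$ and consequently $\mcM\cong \varepsilon^*_{\log,X}\mcM^{\zar}$, making $\mcM$ Zariski in the sense of Definition~\ref{zar-log-structure}. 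By construction, the induced homomorphism $P\to \mcN(X) = \mcM^{\zar}(X)$ underlying the restriction $a^{\zar}$ is then precisely a chart for $(X, \mcM^{\zar})$ subordinate to $P$.

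\textbf{Main obstacle.} The technical heart of the argument lies in verifying the base-change isomorphism above and the isomorphy of the adjunction unit $\mcN\to \varepsilon_{X,*}\varepsilon^*_{\log,X}\mcN$. Both reduce to tracking the sheafifications of the monoid pushouts that define associated log-structures, under the pullback and pushforward functors of topoi. Integrality of $P$ is essential here: it guarantees that the local pushouts underlying the log-structure are well-behaved, so that the sections over Zariski opens computed in the \'etale topology agree with those predicted by the Zariski associated log-structure, which is exactly what is needed to identify $\mcN$ with $\mcM^{\zar}$.
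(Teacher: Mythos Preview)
Your outline is essentially correct and, in fact, reconstructs the content of the references the paper invokes: the paper's own proof consists entirely of the citations \cite[III.1.4.1 (2)]{Ogus-log-geo} and \cite[Lemma 2.4]{niziol06}, with no independent argument. So there is nothing to compare at the level of strategy; you are filling in what the paper defers to the literature.

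A few remarks on the technical points you flag as obstacles. The base-change identification
\[
\ul{a}^*_{\log,\et}\bigl(\varepsilon^*_{\log,\AAA_P}\mcZ_P\bigr)\;\cong\;\varepsilon^*_{\log,X}\bigl(\ul{a}^*_{\log,\zar}\mcZ_P\bigr)
\]
is straightforward once unwound: both sides are the log-structure on $\Et(X)$ associated to the constant prelog-structure $P\to\mcO_{\Et(X)}$ coming from the chart, and this is exactly \cite[III.1.4.1 (1)]{Ogus-log-geo} (cf.\ also Lemma~\ref{zar-charts-to-et-charts} in the paper). The genuinely delicate step is the adjunction unit $\mcN\to\varepsilon_{X,*}\varepsilon^*_{\log,X}\mcN$. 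Here integrality enters as you say: for an integral monoid $P$, the associated log-structure at a point is the honest amalgamated sum $P\oplus_{\alpha^{-1}\mcO^*}\mcO^*$, which sits inside $P^{\gp}\oplus_{\alpha^{-1}\mcO^*}\mcO^*$ and hence is computed without further sheafification; comparing this description at a Zariski point $x$ and at a geometric point $\ol{x}$ over it, together with $\varepsilon_{X,*}\mcO_{\Et(X)}^*=\mcO_X^*$, gives the required isomorphism on stalks. This is precisely the mechanism behind \cite[III.1.4.1 (2)]{Ogus-log-geo}, so your sketch is on target.
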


\begin{proof}
    This is \cite[III.1.4.1 (2)]{Ogus-log-geo}.  
    This is also proved for fs log-schemes by \cite[Lemma 2.4]{niziol06} (and its proof).
\end{proof}

Conversely, a chart for an integral log-structure on Zariski site is also
a chart for the induced log-structure on small \'etale site.

\begin{lemma}\label{zar-charts-to-et-charts}
    Let $(X, \mcZ)$ be an integral log-scheme, where $\mcZ$ is an integral log-structure on $\zar(X)$.
    Assume that $p\colon P\to \mcZ(X)$ is a chart for $(X, \mcZ)$ subordinate to an integral monoid $P$.
    Then the homomorphism $p\colon P\to \mcZ(X)$ is a also chart for 
    $\varepsilon^*_{\log}(\mcZ)$ (see \S\ref{zariski-log-structures-defn-sec}),
    where $\varepsilon\colon X_{\et}\to X_{\zar}$ is the canonical projection of topoi.
\end{lemma}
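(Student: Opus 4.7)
The plan is to reformulate the chart condition in terms of associated log-structures and then apply the pullback functor $\varepsilon^*_{\log}$ from Zariski to \'etale log-structures. Concretely, the chart $p\colon P\to \mcZ(X)$ subordinate to $P$ is equivalent to the assertion that $\mcZ$ is isomorphic to the log-structure associated to the prelog-structure $P_X\to \mcO_X$ determined by $p$, where $P_X$ denotes the constant sheaf on $\zar(X)$ with value $P$. Equivalently, the induced log-morphism $f\colon (X,\mcZ)\to \mathsf{A}_P^{\zar}$ in $\logsch_{\zar}$ is strict, i.e. $\ul{f}^*_{\log}(\mcZ_P)\cong \mcZ$, where $\ul{f}\colon X\to \AAA_P$ is the scheme morphism induced by $p$.

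Next, I would apply $\varepsilon^*_{\log}$. From \S\ref{log-monoids} we have $\mcM_P = \varepsilon^*_{\log}(\mcZ_P)$, so pulling $f$ back through $\varepsilon\colon X_{\et}\to X_{\zar}$ produces a log-morphism $(X, \varepsilon^*_{\log}\mcZ)\to \mathsf{A}_P$ in $\logsch_{\et}$ with the same underlying scheme morphism $\ul{f}$. The claim that $p$ is an \'etale chart is precisely the strictness of this pulled-back morphism, i.e. $\ul{f}^*_{\log}(\mcM_P)\cong \varepsilon^*_{\log}(\mcZ)$. The crucial step is the compatibility
\[ \ul{f}^*_{\log}\circ\varepsilon^*_{\log}\;\cong\;\varepsilon^*_{\log}\circ \ul{f}^*_{\log} \]
for the evident commutative square of topoi. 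Granted this, one computes
\[ \ul{f}^*_{\log}(\mcM_P) \;=\; \ul{f}^*_{\log}\varepsilon^*_{\log}(\mcZ_P)\;\cong\; \varepsilon^*_{\log}\ul{f}^*_{\log}(\mcZ_P)\;\cong\; \varepsilon^*_{\log}(\mcZ), \]
where the last isomorphism uses the Zariski strictness obtained from the given chart.

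The main obstacle is establishing the commutation of $\varepsilon^*_{\log}$ with the scheme-theoretic inverse image of log-structures along $\ul{f}$. I would verify this by identifying both sides with the log-structure on $\Et(X)$ associated to the common prelog-structure $\varepsilon^{-1}\ul{f}^{-1}(P_{\AAA_P})\to \mcO_{\Et(X)}$, using that $\varepsilon^{-1}$ sends the Zariski constant sheaf $P_{\AAA_P}$ to the \'etale constant sheaf $P_{\Et(\AAA_P)}$, together with the universal property of the associated log-structure (both inverse images are defined as the associated log-structure of a suitable pulled-back prelog-structure, so they are canonically isomorphic). Once this routine but slightly delicate compatibility is in place, the lemma follows immediately.
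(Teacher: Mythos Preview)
Your proposal is correct and is essentially the standard unwinding of the references the paper invokes; the paper itself does not give an independent argument but simply cites \cite[III.1.1.4, III.1.4.1 (1)]{Ogus-log-geo}. Your reduction to the commutation $\ul{f}^*_{\log}\circ\varepsilon^*_{\log}\cong\varepsilon^*_{\log}\circ\ul{f}^*_{\log}$ and the verification via the universal property of associated log-structures is exactly the content behind those citations, so there is nothing to add.
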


\begin{proof}
    This is \cite[III.1.1.4, III.1.4.1 (1)]{Ogus-log-geo}.   
\end{proof}

Moreover, the following result shows that many properties of \emph{Zariski} log-structures are preserved by 
restricting to the Zariski site.

\begin{lemma}[=\protect{\cite[12.2.22]{gabber2018foundationsringtheory}}]\label{preserved-properties-zar-to-et}
    Let $(X, \mcM)$ be a log-scheme such that the log-structure $\mcM$ on $\Et(X)$ is Zariski, 
    and let $\mb P$ be one of the following properties:
    \[ \text{integral, coherent, fine, and fs.} \]
    Then the induced log-structure $\mcM^{\zar}$ on $\zar(X)$ satisfies $\mb P$ 
    if and only if $\mcM$ satisfies $\mb P$.
\end{lemma}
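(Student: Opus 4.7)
The plan is to reduce all four properties to stalkwise assertions on the characteristic monoid $\overline{\mcM}$, leveraging the following key stalk comparison for a Zariski log-structure: for every scheme-theoretic point $x \in X$ with geometric point $\ol{x}$ above it, the canonical map on characteristic monoids
\[
    \overline{\mcM}^{\zar}_x \to \overline{\mcM}_{\ol{x}}
\]
is an isomorphism. To see this, unwind $\mcM = \varepsilon^*_{\log}(\mcM^{\zar})$ so that $\mcM_{\ol{x}}$ is the log-structure associated to the prelog-structure $\mcM^{\zar}_x \to \mcO_{X,\ol{x}}$, and observe that the preimage of $\mcO^*_{X,\ol{x}}$ in $\mcM^{\zar}_x$ equals the preimage of $\mcO^*_{X,x}$ (a section of $\mcO_{X,x}$ is a unit iff it becomes a unit in the faithfully flat local extension $\mcO_{X,\ol{x}}$), which is $(\mcM^{\zar}_x)^*$; quotienting by units on both sides yields the same sharp monoid.

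With this in hand, integrality and saturation are straightforward. A log-structure is integral (respectively, saturated) exactly when its characteristic monoid is integral (respectively, saturated) at every stalk, and by the stalk isomorphism this stalkwise condition holds for $\mcM$ on $\Et(X)$ if and only if it holds for $\mcM^{\zar}$ on $\zar(X)$.

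For coherence, one direction is immediate: a Zariski chart $P \to \mcM^{\zar}|_U$ also serves as an \'etale chart for $\mcM|_U = \varepsilon^*_{\log}(\mcM^{\zar})|_U$ by (the monoid-version of) Lemma~\ref{zar-charts-to-et-charts}, whose argument only invokes the universal property of the associated log-structure. Conversely, suppose $\mcM$ is coherent and fix $x \in X$. \'Etale-locally around $\ol{x}$ we have a chart by a finitely generated monoid, so $\overline{\mcM}_{\ol{x}}$ is finitely generated; by the stalk isomorphism so is $\overline{\mcM}^{\zar}_x$. Lift a finite set of generators to elements of $\mcM^{\zar}_x$, extend them to sections over some Zariski neighbourhood $U$ of $x$, and let $Q$ be the free commutative monoid on these generators; this yields a homomorphism $Q \to \mcM^{\zar}|_U$. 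After possibly shrinking $U$, one checks at every stalk $y \in U$ using the comparison $\overline{\mcM}^{\zar}_y \cong \overline{\mcM}_{\ol{y}}$ that this is indeed a chart for $\mcM^{\zar}|_U$. The remaining cases combine formally: fine equals integral plus coherent, and fs equals fine plus saturated.

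The main obstacle is the implication $\mcM$ coherent $\Rightarrow$ $\mcM^{\zar}$ coherent. The stalk comparison only controls fibres pointwise, whereas coherence is an open-local condition; one must therefore promote pointwise generation of $\overline{\mcM}^{\zar}_x$ to a genuine chart on an open neighbourhood. This will use the Noetherian hypothesis on $X$ together with the fact that the assertion ``a given monoid homomorphism induces an isomorphism of associated log-structures'' is itself open on $X$ once one knows the target stalks are finitely generated.
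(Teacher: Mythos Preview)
The paper does not provide its own proof of this lemma; it simply records the statement as \cite[12.2.22]{gabber2018foundationsringtheory}. So there is nothing in the paper to compare against, and I will just assess your argument on its own terms.

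Your stalk comparison $\overline{\mcM}^{\zar}_x \cong \overline{\mcM}_{\ol{x}}$ is the right engine, and it dispatches the integral and saturated cases cleanly; together with coherence this also covers fine and fs. The direction ``$\mcM^{\zar}$ coherent $\Rightarrow$ $\mcM$ coherent'' is likewise fine.

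The genuine gap is in the converse coherence direction, and it is more serious than you indicate. Your construction takes $Q$ to be the \emph{free} monoid on lifts of generators of $\overline{\mcM}^{\zar}_x$. But a homomorphism $\beta\colon Q\to M$ is a chart only when $Q/\beta^{-1}(M^*)\to \overline{M}$ is an \emph{isomorphism}, not merely a surjection. For instance, the diagonal $\NN^2\to\NN$ surjects onto $\overline{\NN}=\NN$ yet is not a chart. So your $Q$ need not chart $\mcM^{\zar}$ even at the single point $x$, before one ever worries about spreading out. The sentence ``being a chart is open once target stalks are finitely generated'' then has no base case to spread from.

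A way to repair this: do not use a free $Q$. Observe instead that $\varepsilon^*_{\log}$ is fully faithful from Zariski log-structures to \'etale log-structures (because $\varepsilon_*\varepsilon^*_{\log}=\mathrm{id}$), so for any prelog-structure $P\to\mcO_U$ coming from a Zariski map $P\to\mcM^{\zar}|_U$, the condition ``$P$ is a chart for $\mcM^{\zar}$ at $y$'' is \emph{equivalent} to ``$P$ is a chart for $\mcM$ at $\ol{y}$''. Now take $P$ to be the monoid appearing in an \'etale chart for the coherent $\mcM$ near $\ol{x}$, and use the standard lifting of charts along the exact surjection $\mcM^{\zar}_x\to\overline{\mcM}^{\zar}_x$ (as in \cite[II.2.3.7]{Ogus-log-geo} or \cite[(1.6)]{Kato-toric}) to produce $P\to\mcM^{\zar}_x$, hence $P\to\mcM^{\zar}|_U$ on a Zariski open $U$. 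By the equivalence just noted, checking this is a chart reduces to the \'etale side, where coherence of $\mcM$ gives openness of the chart locus.
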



\subsection{Compactifying log-structures}\label{compact-log-struc-defn}
Let $U$ be a nonempty Zariski open subset of a scheme $X$,
and let $j\colon U\to X$ be the open immersion.  The direct image log-structure
\[ \alpha_{U/X}\colon \mc{M}_{U/X}\coloneqq j_*^{\log}(\mc{O}_{\Et (U)}^*) \to \mc{O}_{\Et (X)} \]
is called the \emph{compactifying log-structure} associated to the open immersion $j$.
The sheaf $\mc{M}_{U/X}$ is just the sheaf of local sections of $\mc{O}_{\Et (X)}$ 
whose restrictions to $\Et (U)$
are invertible; see \cite[\S III.1.6]{Ogus-log-geo}.
The log-structure $\alpha_{U/X}$ is the trivial log-structure if $U = X$.
We can also associate 
the open immersion $j\colon U\to X$ the \emph{Zariski compactifying log-structure}, that is,
the sheaf of local sections of $\mc{O}_X$ whose restrictions to $U$ are invertible.

Given an open immersion $U\to X$, we always denote by
$\mc{M}_{U/X}$ the compactifying log-structure on $\Et(X)$, and by
$\mc{M}_{U/X}^{\zar}$ the Zariski compactifying log-structure.
It is evident that $\mc{M}_{U/X}^{\zar}$ is the image 
of $\mc{M}_{U/X}$ via the projection $X_{\et} \to X_{\zar}$; see \S\ref{et-Zar}.


\subsection{Log-structure associated to toroidal embeddings}

Let $(U\subset X)$ be a toroidal embedding.
By saying the \emph{log-structure associated to $(U\subset X)$},
we mean the compactifying log-structure $\mc{M}_{U/X}$ on $\Et (X)$.
On the other hand, let $(U_X\subset X)$ and $(U_Y\subset Y)$ be toroidal embeddings,
and let $\ul{f}\colon X\to Y$ be a morphism of varieties
such that $\ul{f}(U_X)$ is contained in $U_Y$.
By \cite[III.1.6.2]{Ogus-log-geo}, there is a uniquely determined 
log-morphism of log-varieties $f\colon (X, \mc{M}_{U_X/X}) \to (Y, \mc{M}_{U_Y/Y})$
corresponding to $\ul{f}$.  By slightly abusing of terminologies, we also say that the morphism 
of embeddings $\ul{f}\colon (U_X\subset X)\to (U_Y\subset Y)$ (see \S\ref{embedding-morphism-defn})
is a log-morphism of log-varieties.


\subsection{Log-regularity}

Let $X$ be a scheme and $\mc{M}$ a log-structure on $\Et(X)$ such that
$(X, \mc{M})$ is an fs log-scheme.  
For \emph{log-regularity} of $(X, \mc{M})$, we refer readers to
\cite[Definition 2.2]{niziol06}, \cite[12.5.23]{gabber2018foundationsringtheory} and
\cite[\S III.1.11]{Ogus-log-geo}.
Moreover, if $\mc{N}$ is an fs log-structure on $\zar(X)$,
then the \emph{log-regularity} of $(X, \mc{N})$ is defined in \cite[(2.1)]{Kato-toric}.

For an algebraic scheme $X$ (see \S\ref{schemes-varieties-defn}),
we say that $(X, \mcZ)$ is log-regular in $\logzar$ if $\mcZ$ is a log-structure on $\zar(X)$
and if $(X, \mcZ)$ is log-regular in the sense of \cite[(2.1)]{Kato-toric};
we say that $(X, \mcM)$ is log-regular in $\loget$ if $\mcM$ is a log-structure on $\Et(X)$
and if $(X, \mcM)$ is log-regular in the sense of \cite[Definition 2.2]{niziol06}.

Let $\mc{M}$ be an fs log-structure on $\Et(X)$ for a scheme $X$, 
and let $\mc{M}^{\zar}$ be its image via the canonical projection $X_{\et}\to X_{\zar}$; see \S\ref{et-Zar}.
Then by \cite[Lemmas 2.3, 2.4]{niziol06},
if $(X, \mc{M})$ admits a global chart, it is log-regular in the sense of \cite[Definition 2.2]{niziol06}
if and only if $(X, \mc{M}^{\zar})$ is log-regular in the sense of \cite[(2.1)]{Kato-toric}.
More generally, for Zariski log-structures, we have the following result from \cite{gabber2018foundationsringtheory}.

\begin{lemma}[cf. \protect{\cite[12.5.37]{gabber2018foundationsringtheory}}]\label{log-regular-two-tops}
    Let $X$ be a scheme and $\mcM$ an fs Zariski log-structure on $\Et(X)$.
    Then $(X, \mcM^{\zar})$ is log-regular in the sense of \cite[(2.1)]{Kato-toric}
    if and only if $(X, \mcM)$ is log-regular in the sense of \cite[Definition 2.2]{niziol06}.
\end{lemma}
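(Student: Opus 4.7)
The plan is to reduce to the case of a log-scheme admitting a global chart and then invoke the known equivalence from \cite[Lemmas 2.3, 2.4]{niziol06}. The key observation is that both notions of log-regularity are local conditions and that, for an fs Zariski log-structure, charts can be produced Zariski-locally via \cite[(1.6)]{Kato-toric}, and such charts remain charts for the associated \'etale log-structure by Lemma~\ref{zar-charts-to-et-charts}.

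First, I would note that log-regularity at a (scheme-theoretic or geometric) point is tested on local rings and stalks of the characteristic monoid. Thus both conditions, Kato's on $(X, \mcM^{\zar})$ and Nizio\l's on $(X, \mcM)$, are stable under passing to Zariski open neighbourhoods, and it suffices to verify the equivalence \'etale-locally around each point $x\in X$. By \cite[(1.6)]{Kato-toric} applied to the fs Zariski log-structure $\mcM^{\zar}$, I can shrink $X$ to a Zariski open neighbourhood $U$ of $x$ on which there exist a sharp fs monoid $P$ and a chart $\varphi\colon P\to \mcM^{\zar}(U)$ for $(U, \mcM^{\zar}|_U)$ subordinate to $P$.

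Next, since $\mcM = \varepsilon^*_{\log}(\mcM^{\zar})$ by the Zariski hypothesis, Lemma~\ref{zar-charts-to-et-charts} shows that the same homomorphism $\varphi\colon P\to \mcM^{\zar}(U) = \mcM(U)$ is also a chart for $(U, \mcM|_U)$ subordinate to $P$. Thus after replacing $X$ by $U$, both log-schemes $(X, \mcM^{\zar})$ and $(X, \mcM)$ admit a global chart subordinate to the same fs monoid $P$. In this situation, the equivalence of Kato's log-regularity for $(X, \mcM^{\zar})$ and Nizio\l's log-regularity for $(X, \mcM)$ is exactly the content of \cite[Lemmas 2.3, 2.4]{niziol06}, concluding the argument.

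The main subtlety is the comparison of stalks in the two topologies: the characteristic monoid $\overline{\mcM}_{\ol{x}}$ on $\Et(X)$ need not coincide with $\overline{\mcM^{\zar}}_x$ in general, and the relevant local rings (the Zariski local ring versus its strict henselisation) differ. However, once a common chart subordinate to an fs monoid $P$ is chosen, the monoid-theoretic data appearing in both log-regularity conditions factor through $P$ in a uniform way, and the strict henselisation is faithfully flat over the Zariski local ring with geometrically regular fibres, so regularity and the dimension identity in the two definitions transfer to one another. This is precisely what the cited lemmas of \cite{niziol06} encode, and is the only non-formal input needed.
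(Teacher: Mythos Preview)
Your proposal is correct and follows precisely the approach the paper itself hints at in the paragraph immediately preceding the lemma: the paper observes that \cite[Lemmas 2.3, 2.4]{niziol06} settle the case of a global chart, and then defers the general Zariski case to \cite[12.5.37]{gabber2018foundationsringtheory} without writing out a proof. Your argument simply fills in the localization step (using \cite[(1.6)]{Kato-toric} to produce Zariski-local fs charts for $\mcM^{\zar}$, and Lemma~\ref{zar-charts-to-et-charts} to see these are also charts for $\mcM$), thereby reducing to the globally-charted case already noted in the text. One small point worth making explicit: to apply \cite[(1.6)]{Kato-toric} to $\mcM^{\zar}$ you need $\mcM^{\zar}$ to be fs, which follows from Lemma~\ref{preserved-properties-zar-to-et} since $\mcM$ is Zariski and fs by hypothesis.
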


Because of Lemma~\ref{log-regular-two-tops},
if the Grothendieck topology of $X$ for the log-structure 
$\mc{M}$ is clear from the context, we can just say that the log-scheme $(X, \mc{M})$ is log-regular
without specifying that we are using \cite[Definition 2.2]{niziol06} or \cite[(2.1)]{Kato-toric}.

For both the small \'etale sites and Zariski sites,
the following result shows that
log-regularity is an \'etale-local property.

\begin{lemma}\label{log-regular-etale-local}
    Let $(X, \mcM)$ be an fs log-scheme, where $\mcM$ is a log-structure on $\Et(X)$
    (respectively, on $\zar(X)$).  Let $f\colon Y\to X$ be a surjective \'etale morphism of schemes, and
    let $\mcN$ be the inverse image $f^*_{\log}\mcM$ of the log-structure $\mcM$ to $\Et(X)$
    (respectively, to $\zar(X)$).  Then $(X, \mcM)$ is log-regular 
    if and only if $(Y, \mcN)$ is log-regular.
\end{lemma}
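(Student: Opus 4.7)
The plan is to reduce log-regularity to a pointwise condition and then verify that every invariant entering the definition is preserved by the étale morphism $f\colon Y\to X$. Recall that, in both topologies, an fs log-scheme $(X,\mcM)$ is log-regular if and only if at every (geometric, in the étale case; scheme-theoretic, in the Zariski case) point $x$ of $X$ the following two conditions hold: (a) the quotient $\mcO_{X,x}/I_x$ is a regular local ring, where $I_x$ is the ideal generated by the image of $\mcM_x\setminus \mcO_{X,x}^*$ in $\mcO_{X,x}$, and (b) one has the dimension equality $\dim \mcO_{X,x} = \dim \mcO_{X,x}/I_x + \rank \ol{\mcM}_x^{\gp}$ (replacing $x$ with a geometric point $\ol x$ in the étale case).

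The key ingredients I would collect are the following. For any $y\in Y$ mapping to $x=f(y)$: since $f$ is étale, $\mcO_{Y,y}\to \mcO_{X,x}$ is flat of relative dimension zero, hence $\dim \mcO_{Y,y} = \dim \mcO_{X,x}$ and $\mcO_{Y,y}$ is regular iff $\mcO_{X,x}$ is regular. More generally, because flat base change preserves regularity when the fibre is regular, $\mcO_{Y,y}/I_y$ is regular iff $\mcO_{X,x}/I_x$ is regular, where the ideal pulls back along the faithfully flat local homomorphism $\mcO_{X,x}\to \mcO_{Y,y}$, i.e.\ $I_y = I_x\cdot \mcO_{Y,y}$. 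Since $\mcN = f^*_{\log}\mcM$, the stalks of the characteristic monoid sheaf are preserved: $\ol{\mcN}_y\cong \ol{\mcM}_x$ (and similarly at geometric points), so $\rank \ol{\mcN}_y^{\gp} = \rank \ol{\mcM}_x^{\gp}$. Taking these facts together, condition (a) and condition (b) at $y$ are equivalent to the same conditions at $x$.

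For the forward direction, assume $(X,\mcM)$ is log-regular; then for any (geometric) point of $Y$ the two conditions at its image hold by hypothesis, and transport along $f$ via the equivalences above. For the converse, assume $(Y,\mcN)$ is log-regular; given any (geometric) point of $X$, surjectivity of $f$ yields a (geometric) point of $Y$ lying over it, and the same equivalences transport the conditions back. The Zariski statement is handled in the same way, using that $f$ induces, at corresponding points, a faithfully flat homomorphism of Zariski local rings with trivial fibre and that Zariski stalks of $\mcM$ and its inverse image $\mcN$ agree up to units.

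The main obstacle is bookkeeping between topologies: in the étale case one has to work with strict henselisations, so the verification that $I_{\ol y} = I_{\ol x}\cdot \mcO_{Y,\ol y}$ and that $\mcO_{X,\ol x}\to \mcO_{Y,\ol y}$ is a (faithfully) flat extension preserving regularity of quotients requires invoking the compatibility of $f^*_{\log}$ with stalks at geometric points, together with the standard preservation of regularity under flat local homomorphisms with geometrically regular fibres (which here are trivial fibres, so this is immediate). Once these compatibilities are spelled out, the equivalence of the two log-regularity statements is formal.
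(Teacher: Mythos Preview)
Your argument is correct. For the étale case, both you and the paper reach the same conclusion, though the paper is more economical: since $f$ is étale, $f^{-1}\mcM$ is already a log-structure and equals the restriction $\mcM|_{\Et(Y)}$, so the stalks of $\mcN$ and $\mcM$ at corresponding geometric points are literally identical (indeed $\mcO_{X,\ol x}\to\mcO_{Y,\ol y}$ is an isomorphism of strictly henselian local rings, not merely a faithfully flat extension), and Nizio{\l}'s definition of log-regularity is phrased pointwise at geometric points---so nothing further is needed. Your faithfully-flat bookkeeping is not wrong, just redundant in this case.

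For the Zariski case your approach genuinely differs from the paper's. You verify Kato's conditions directly at Zariski points, using that the local map $\mcO_{X,x}\to\mcO_{Y,y}$ is étale (hence flat of relative dimension zero), that $I_y = I_x\cdot\mcO_{Y,y}$, that regularity ascends along étale maps and descends along faithfully flat local maps, and that $\ol{\mcN}_y\cong\ol{\mcM}_x$. The paper instead lifts both log-structures to the étale site via $\varepsilon^*_{\log}$, invokes Lemma~\ref{log-regular-two-tops} to identify Zariski and étale log-regularity for Zariski log-structures, and then applies the étale case just established. Your route is more self-contained and avoids the comparison machinery between the two topologies; the paper's route is shorter once that machinery is available and keeps the étale viewpoint as primary.
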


\begin{proof}
    Assume that $\mcM$ is a log-structure on $\Et(X)$.  
    Note that $f^*_{\log}\mcM$ is the log-structure associated to the prelog-structure
    (see \cite[III.1.1.5]{Ogus-log-geo})
    \[ f^{-1}\mcM \to f^{-1}(\mcO_{\Et(X)}) \to \mcO_{\Et(Y)}. \]
    As $f$ is \'etale, $f^{-1}\mcM$ is just the restriction
    of $\mcM$ to $\Et(Y)$ by \cite[\S II.2, II.3.1 (a), page 68]{etale-cohomology},
    hence $f^{-1}\mcM \to \mcO_{\Et(Y)}$ is already a log-structure.
    Thus, $\mcN = f^{-1}\mcM = \mcM|_{\Et(Y)}$.
    Then in this case, the result follows immediately from \cite[Definition 2.2]{niziol06}.

    Now assume that $\mcM$ is a log-structure on $\zar(X)$.
    Let $\varepsilon\colon X_{\et}\to X_{\zar}$ be the natural projection of topoi
    (see \S\ref{et-Zar}), and let
    (see \S\ref{zariski-log-structures-defn-sec})
    \[ \wt{\mcM} \coloneqq \varepsilon^*_{\log} \mcM. \]
    Note that by Lemma~\ref{preserved-properties-zar-to-et}, $(X, \wt{\mcM})$
    is also an fs log-scheme.
    Then by Lemma~\ref{log-regular-two-tops}, $(X, \wt{\mcM})$ is log-regular
    if and only if $(X, \mcM)$ is log-regular.  Recall that by
    the construction in \S\ref{zariski-log-structures-defn-sec},
    the restriction of $\wt{\mcM}$ to $\zar(Y)$ is just the log-structure $\mcN$.
    Moreover, as $f$ is \'etale, the restriction of $\wt{\mcM}$ to $\Et(Y)$ is
    the inverse image $\wt{\mcN}$ of $\mcN$ from $\zar(Y)$ to $\Et(Y)$.
    Again by Lemma~\ref{log-regular-two-tops}, $(Y, \wt{\mcN})$ is log-regular if and only if
    $(Y, \mcN)$ is log-regular.  By the result of the \'etale case as above,
    $(X, \wt{\mcM})$ is log-regular if and only if $(Y, \wt{\mcN})$ is log-regular.
    Thus, we can conclude that $(X, \mcM)$ is log-regular if and only if $(Y, \mcN)$ is log-regular.
\end{proof}


\subsection{Log-regularity and compactifying log-structures}\label{log-regular-compactifying-section}

The log-structure of a log-regular log-scheme $(X, \mc{M})$ is compactifying;
indeed, the canonical homomorphism $\mc{M}_{X^*/X}\to \mc{M}$ (see \cite[III.1.6.2]{Ogus-log-geo}) is an isomorphism, where
\[ X^* \coloneqq \Set{ x\in X\mid \mc{M}_{\ol{x}} = \mc{O}_{X, \ol{x}}^* } \]
is the maximal Zariski open subset of $X$ on which the log-structure $\mc{M}$ is trivial.
This is proved for log-structures on Zariski sites in \cite[(11.6)]{Kato-toric}, 
and for general log-schemes in \cite[Proposition 2.6]{niziol06} and \cite[III.1.11.12]{Ogus-log-geo};
see also \cite[12.5.54]{gabber2018foundationsringtheory} for a uniform treatment
of the two topologies.

Log-regularity of a log scheme also implies nice singularities of the underlying scheme as 
the following result shows.

\begin{lemma}[=\protect{\cite[12.5.29]{gabber2018foundationsringtheory}}]\label{log-regular-normal}
    Let $(X, \mcM)$ be a log-regular log-scheme, where $\mcM$ is a log-structure on $\Et(X)$ or $\zar(X)$.
    Then the underlying scheme $X$ is normal and Cohen-Macaulay.
\end{lemma}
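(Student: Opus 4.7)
The plan is to reduce the statement to a local question about completions and then invoke Kato's structure theorem for log-regular rings.

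First, both normality and Cohen-Macaulay are Zariski-local properties that can be checked at each stalk $\mcO_{X,x}$, and they are also étale-local (normality because Serre's conditions $(R_1)$ and $(S_2)$ are preserved and reflected under essentially étale base change, Cohen-Macaulay because depth and dimension are). Hence the cases $\mcM\in \Et(X)$ and $\mcM\in \zar(X)$ reduce to the same statement at stalks; in particular, by Lemma~\ref{log-regular-etale-local} one may pass freely between $X$ and an étale cover. Since $\mcO_{X,x}\to \wh{\mcO}_{X,x}$ is faithfully flat, both properties further descend from the completion: for Cohen-Macaulay this uses that depth and dimension are preserved under faithfully flat local maps, and for normality one uses Serre's $(R_1)+(S_2)$ criterion together with faithfully flat descent of these conditions. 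Thus it suffices to prove that $\wh{\mcO}_{X,x}$ is normal and Cohen-Macaulay for every $x\in X$.

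Next, I would apply Kato's structure theorem, \cite[Theorem (3.2)]{Kato-toric}: for a log-regular log-scheme $(X,\mcM)$, there is a (non-canonical) isomorphism
\[
\wh{\mcO}_{X,x}\;\cong\;R[[\ol{\mcM}_x]],
\]
where $\ol{\mcM}_x$ is the sharp fs characteristic monoid at $x$ and $R$ is a complete regular local ring with residue field $k(x)$; equivalently, $\wh{\mcO}_{X,x}$ is the completion of $\spec \mbb K[\ol{\mcM}_x]\times \AAA^r$ at a closed point for some $r\ge 0$. The affine toric variety $\spec \mbb K[\ol{\mcM}_x]$ is normal by \cite[Theorem 1.3.5]{CLS:toric} and Cohen-Macaulay by Hochster's theorem \cite[Theorem 9.2.9]{CLS:toric}; these properties are preserved under formal power series extensions (depth adds up and Serre's conditions are stable under $R\mapsto R[[t]]$) and under completion at a maximal ideal of an excellent ring. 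Therefore $\wh{\mcO}_{X,x}$ is normal and Cohen-Macaulay, and the claim follows by descent.

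The principal obstacle is Kato's structure theorem itself, whose proof is a nontrivial Cohen-ring type argument and requires care with the coefficient ring versus the residue field and with splitting the characteristic monoid into the local ring after completion. Once that theorem is accepted as a black box, the remaining steps are routine commutative algebra together with classical facts about affine toric varieties.
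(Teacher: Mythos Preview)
The paper does not supply its own proof of this lemma; it is stated as a direct citation of \cite[12.5.29]{gabber2018foundationsringtheory}, so there is no ``paper's proof'' to compare against. Your outline is essentially Kato's original argument from \cite[(4.1)]{Kato-toric}: pass to completions, invoke the structure theorem \cite[(3.2)]{Kato-toric}, and then use that fs monoid algebras are normal and Cohen--Macaulay (Hochster). This is the standard route and is correct.

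One small caveat: your ``equivalently, $\wh{\mcO}_{X,x}$ is the completion of $\spec \mbb K[\ol{\mcM}_x]\times \AAA^r$ at a closed point'' is only valid in the equicharacteristic situation (and with $\mbb K$ as in the paper's standing hypotheses). The lemma as stated is for arbitrary log-regular log-schemes, so in mixed characteristic Kato's \cite[(3.2)]{Kato-toric} gives a slightly different description of the completion (involving a Cohen ring and a distinguished element mapping to a uniformiser). Your first formulation, $\wh{\mcO}_{X,x}\cong R[[\ol{\mcM}_x]]$ with $R$ complete regular local, is closer to what one needs in general, and the normality/Cohen--Macaulay verification for such rings still goes through (this is exactly what \cite[(4.1)]{Kato-toric} does). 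If you only intend the lemma in the paper's setting over $\mbb K$, the simplified version suffices; otherwise drop the ``equivalently'' clause.
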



\subsection{Log-smooth and log-\'etale log-morphisms}

For the notions of \emph{log-smooth} and \emph{log-\'etale} log-morphisms between log-schemes,
we refer readers to \cite[\S IV.3]{Ogus-log-geo} and \cite[\S 12.3]{gabber2018foundationsringtheory}.
When saying an \emph{\'etale morphism} (or that \emph{a morphism is \'etale}),
we mean an \'etale morphism of schemes in the usual sense, not a log-\'etale log-morphism.

The following result is proved in \cite[(8.3)]{Kato-toric} for log-structures on Zariski sites,
and in \cite[IV.3.5.1]{Ogus-log-geo} for log-structures on small \'etale sites,
which says that a log-variety is log-regular if and only if the canonical log-morphism to $\spec \mbb K$ is log-smooth.

\begin{lemma}\label{log-regular-equi-smooth}
    Let $(X, \mcM)$ be a log-scheme, where $X$ is an algebraic scheme (see \S\ref{schemes-varieties-defn})
    and $\mcM$ is an fs log-structure 
    on $\Et(X)$ (respectively, on $\zar(X)$).  Then $(X, \mcM)$ is log-regular in $\loget$ 
    (respectively, in $\logzar$) if and only if 
    $(X, \mcM)\to \spec \mbb K$ is log-smooth in $\loget$ (respectively, in $\logzar$).
\end{lemma}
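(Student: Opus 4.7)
The plan is to observe that the statement is a direct packaging of two classical theorems in logarithmic geometry, one for each topology, so that the proof reduces to verifying that our hypotheses match those of the cited references and then invoking them.

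First I would treat the Zariski case. With $\mcM$ an fs log-structure on $\zar(X)$ and $X$ algebraic over $\mbb K$, we are in the exact setting of \cite[(8.3)]{Kato-toric}. Taking the base to be $\spec \mbb K$ endowed with its trivial log-structure (which is fs and log-regular in Kato's sense, since $\mbb K$ is a field), that theorem yields precisely the biconditional we want: log-smoothness of $(X,\mcM)\to \spec \mbb K$ in $\logzar$ is equivalent to log-regularity of $(X,\mcM)$ in the sense of \cite[(2.1)]{Kato-toric}. Since $X$ is Noetherian (as an algebraic scheme over $\mbb K$) and $\mcM$ is fs, the hypotheses of Kato's theorem are all satisfied.

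Next I would treat the \'etale case by the same route, substituting \cite[IV.3.5.1]{Ogus-log-geo} for \cite[(8.3)]{Kato-toric}. The base $\spec \mbb K$ with its trivial log-structure is fs and log-regular in the sense of \cite[Definition 2.2]{niziol06}, and since $X$ is Noetherian and $\mcM$ is an fs log-structure on $\Et(X)$, Ogus's theorem directly supplies the claimed equivalence in $\loget$.

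The only real concern, and the one requiring the most care, is that the notion of log-regularity varies slightly in formulation across \cite{Kato-toric}, \cite{niziol06}, \cite{Ogus-log-geo}, and \cite{gabber2018foundationsringtheory}. This compatibility has already been addressed in the paper, most directly by Lemma~\ref{log-regular-two-tops} and the conventions recorded in \S\ref{log-regular-compactifying-section}. Thus no further obstacle remains, and the main task is simply to assemble the correct citation in each topology.
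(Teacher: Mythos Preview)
Your proposal is correct and matches the paper's approach exactly: the paper does not give an independent proof of this lemma but simply records it as a citation to \cite[(8.3)]{Kato-toric} for the Zariski case and \cite[IV.3.5.1]{Ogus-log-geo} for the \'etale case, precisely as you do. Your additional remark about reconciling the various definitions of log-regularity via Lemma~\ref{log-regular-two-tops} is a reasonable bit of extra care, but the paper does not spell this out either.
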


By the charts description of log-smooth log-morphisms
(see \cite[Proposition (3.4)]{Kato89}, \cite[IV.3.1.8]{Ogus-log-geo} and \cite[12.3.34]{gabber2018foundationsringtheory}), 
Lemma~\ref{log-regular-equi-smooth} implies that toric varieties are log-regular as follows.
This is very well-know; we include a sketchy proof for clarification.

\begin{lemma}\label{toric-log-regular}
    Let $W_{\sigma}$ be a normal affine toric variety given by $(N,\sigma)$; see \S\ref{toric-couples-defn}.  Let
    $M_{\sigma} \coloneqq \sigma^{\vee}\cap M$, where $M$ is the dual lattice of $N$, 
    and $\sigma^{\vee}$ is the dual cone of $\sigma$ in $M_{\RR} \coloneqq M\otimes_{\ZZ}\RR$.  
    Denote by $\mbb T_{\sigma}$ the torus of $W_{\sigma}$.
    Then 
    \begin{itemize}
        \item [\emph{(1)}] the log-scheme (see \S\ref{log-monoids})
    \[ \mathsf{A}_{M_{\sigma}} \coloneqq \spec (M_{\sigma} \to \mbb{K} [M_{\sigma}]) \]
    is isomorphic to the log-scheme $(W_{\sigma}, \mc{M}_{\mbb{T}_{\sigma}/ W_{\sigma}})$ (see \S\ref{compact-log-struc-defn}), and
        \item [\emph{(2)}] the log-scheme $\mathsf{A}_{M_{\sigma}}$ is a log-regular log-variety in $\loget$.
    \end{itemize}
    Moreover, let $\mc{M}_{\mbb T_{\sigma}/W_{\sigma}}^{\zar}$ be the Zariski compactifying log-structure
    associated to the open embedding $\mbb T_{\sigma}\hookrightarrow W_{\sigma}$ (see \S\ref{compact-log-struc-defn}), then
    $(X, \mc{M}_{\mbb T_{\sigma}/W_{\sigma}}^{\zar})$ is log-regular in $\logzar$.
\end{lemma}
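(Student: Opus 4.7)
My plan is to handle the two enumerated claims in turn and then deduce the moreover statement by invoking Lemma~\ref{log-regular-two-tops}.

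For \emph{(1)}, I would show directly that the homomorphism $M_\sigma \hookrightarrow \mathbb{K}[M_\sigma] = \mathbb{K}[W_\sigma]$ is a global chart for the compactifying log-structure $\mathcal{M}_{\mathbb{T}_\sigma/W_\sigma}$. Since $M_\sigma = \sigma^\vee \cap M$, every character $\chi^m$ with $m \in M_\sigma$ is a regular function on $W_\sigma$ whose restriction to $\mathbb{T}_\sigma = \spec \mathbb{K}[M]$ is a unit (its inverse being $\chi^{-m}$), hence lies in the sections of $\mathcal{M}_{\mathbb{T}_\sigma/W_\sigma}$. Using the fact from \S\ref{log-monoids} that $\mathsf{A}_{M_\sigma}^* = \spec \mathbb{K}[M_\sigma^{\gp}] = \spec \mathbb{K}[M] = \mathbb{T}_\sigma$, together with the universal property of compactifying log-structures (\cite[III.1.6.2]{Ogus-log-geo}), the canonical map from the log-structure associated to $M_\sigma \to \mathbb{K}[M_\sigma]$ to $\mathcal{M}_{\mathbb{T}_\sigma/W_\sigma}$ is an isomorphism.

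For \emph{(2)}, by Lemma~\ref{log-regular-equi-smooth} it suffices to show that the structure map $\mathsf{A}_{M_\sigma} \to \spec \mathbb{K}$ (with trivial log-structure on the base) is log-smooth. I would invoke the chart criterion of Kato (\cite[Theorem (3.5)]{Kato89}, also \cite[IV.3.1.8]{Ogus-log-geo}): the map admits the chart given by the monoid homomorphism $\mathbf{e} \to M_\sigma$ from the trivial monoid, and log-smoothness reduces to the condition that $(M_\sigma/\mathbf{e})^{\gp} = M_\sigma^{\gp} = M$ is a finitely generated free abelian group whose torsion order is invertible in $\mathbb{K}$. Since $M$ is a lattice and $\mathbb{K}$ has characteristic zero, this is immediate.

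Finally, for the moreover statement, the chart $M_\sigma \to \mathbb{K}[M_\sigma]$ constructed in the proof of (1) is already a Zariski chart, so the log-structure $\mathcal{M}_{\mathbb{T}_\sigma/W_\sigma}$ is Zariski in the sense of Definition~\ref{zar-log-structure}, with Zariski restriction $\mathcal{M}_{\mathbb{T}_\sigma/W_\sigma}^{\zar}$ (compare Lemma~\ref{et-to-zar-charts}). Then Lemma~\ref{log-regular-two-tops} transfers the log-regularity established in (2) from the \'etale topology to the Zariski topology, giving log-regularity of $(W_\sigma, \mathcal{M}_{\mathbb{T}_\sigma/W_\sigma}^{\zar})$ in $\logzar$. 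The only genuinely non-routine step is identifying the two log-structures in part (1); the rest is a direct application of charts and the cited equivalences.
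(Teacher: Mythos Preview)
Your argument for (2) and the ``moreover'' clause is correct and matches the paper's approach: Kato's chart criterion with the trivial monoid on the base, then Lemma~\ref{log-regular-equi-smooth}, and finally Lemma~\ref{log-regular-two-tops} (together with Lemma~\ref{et-to-zar-charts}) for the Zariski statement.

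The gap is in your treatment of (1). The universal property \cite[III.1.6.2]{Ogus-log-geo} only produces a canonical morphism of log-structures from $\mcM_{M_\sigma}$ to the compactifying log-structure $\mcM_{\mbb T_\sigma/W_\sigma}$; it does not tell you this map is an isomorphism. Concretely, you have shown that every $\chi^m$ with $m\in M_\sigma$ lies in $\mcM_{\mbb T_\sigma/W_\sigma}$, which gives injectivity (both sheaves sit inside $\mcO_{W_\sigma}$), but you have not argued surjectivity: why is every local section of $\mcO_{W_\sigma}$ that is invertible on $\mbb T_\sigma$ \'etale-locally of the form $u\cdot\chi^m$ with $u$ a unit and $m\in M_\sigma$? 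This is a genuine statement about toric varieties (essentially that toric Cartier divisors supported on the boundary are principal for characters) and is not a formal consequence of the definitions you cite.

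The paper resolves this by reversing the order of the two parts: it first proves log-regularity of $\mathsf{A}_{M_\sigma}$ via \cite[IV.3.1.9]{Ogus-log-geo} and Lemma~\ref{log-regular-equi-smooth}, identifies the trivial locus as $\spec\mbb K[M_\sigma^{\gp}]=\mbb T_\sigma$ via \cite[III.1.2.10]{Ogus-log-geo}, and only then invokes the nontrivial fact \cite[III.1.11.12]{Ogus-log-geo} that the log-structure of a log-regular log-scheme is automatically the compactifying log-structure of its trivial locus. This last result is precisely what supplies the missing surjectivity. Your proof becomes correct if you swap the order and appeal to \cite[III.1.11.12]{Ogus-log-geo} after establishing (2).
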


\begin{proof}
    First note that the underlying scheme $\AAA_{M_{\sigma}}$ of $\mathsf{A}_{M_{\sigma}}$ is just $W_{\sigma}$.
    As $W_{\sigma}$ is a normal toric variety, by \cite[Theorem 1.3.5]{CLS:toric},
    $M_{\sigma}$ is an fs monoid with $M_{\sigma}^{\gp}$ torsion free.
    Moreover, by \cite[Proposition 1.1.14]{CLS:toric}, the character lattice of the torus $\mbb{T}_{\sigma}$
    is $M_{\sigma}^{\gp} \cong M$, so the open subscheme
    $\spec (\mbb{K}[M_{\sigma}^{\gp}])$ of $\AAA_{M_{\sigma}}$ 
    is isomorphic to $\mbb{T}_{\sigma}\cong \mbb{G}_{\mbb K}^d$, 
    where $d \coloneqq \rank M = \dim W_{\sigma}$.  
    By \cite[IV.3.1.9]{Ogus-log-geo}, $\mathsf{A}_{M_{\sigma}}\to \spec \mbb K$ is log-smooth
    in $\loget$, hence by Lemma~\ref{log-regular-equi-smooth},
    $\mathsf{A}_{M_{\sigma}}$ is a log-regular log-variety in $\loget$.  
    In addition, by \cite[III.1.2.10]{Ogus-log-geo}, $\spec (\mbb{K}[M_{\sigma}^{\gp}])$
    is the maximal open subscheme of $\AAA_{M_{\sigma}}$ on which the log-structure
    of $\mathsf{A}_{M_{\sigma}}$ is trivial.  Thus, \cite[III.1.11.12]{Ogus-log-geo} 
    shows that the log-structure of $\mathsf{A}_{M_{\sigma}}$
    is isomorphic to the compactifying log-structure $\mc{M}_{\mbb{T}_{\sigma}/W_{\sigma}}$.  
    This proves (1) and (2).  This also shows that $M_{\sigma}\to \mbb K[M_{\sigma}]$
    is a global chart for the log-variety $(W_{\sigma}, \mc{M}_{\mbb{T}_{\sigma}/ W_{\sigma}})$.
    Hence $(X, \mc{M}_{\mbb T_{\sigma}/W_{\sigma}}^{\zar})$ is log-regular 
    in $\logzar$ by Lemmas~\ref{et-to-zar-charts}, \ref{preserved-properties-zar-to-et} 
    and \ref{log-regular-two-tops}.
\end{proof}


The following result shows that for Zariski log-structures, log-smoothness
of log-morphisms on small \'etale sites is equivalent to that on Zariski sites.

\begin{lemma}[cf. \protect{\cite[12.3.27]{gabber2018foundationsringtheory}}]\label{log-sm-et-zar}
    Let $f\colon (X, \mc{M}_X)\to (Y, \mcM_Y)$ be a log-morphism of integral log-schemes
    on small \'etale sites.
    Assume that the log-structures $\mc{M}_X, \mcM_Y$ are Zariski, that is,
    $\mcM_X, \mcM_Y$ are the inverse images of of $\mcM_X^{\zar}, \mcM_Y^{\zar}$ 
    respectively; see Definition~\ref{zar-log-structure}.  Let
    \[ f^{\zar}\colon (X, \mcM_X^{\zar}) \to (Y, \mcM_Y^{\zar}) \]
    be the induced log-morphism of log-schemes on Zariski sites.  Then
    $f$ is log-smooth if and only if $f^{\zar}$ is log-smooth.
\end{lemma}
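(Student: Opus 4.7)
The plan is to match both sides of the equivalence against a single chart criterion for log-smoothness, namely: a log-morphism of fine log-schemes $f\colon (X,\mcM_X)\to(Y,\mcM_Y)$ is log-smooth at $x\in X$ if and only if, after passing to an \'etale neighbourhood of $x$ in $X$ and of $f(x)$ in $Y$, there exist compatible charts $Q\to\mcM_Y$ and $P\to\mcM_X$ subordinate to fine monoids together with a homomorphism $\theta\colon Q\to P$ such that (i) the kernel and the torsion part of the cokernel of $\theta^{\gp}\colon Q^{\gp}\to P^{\gp}$ have order invertible in the residue field of $x$ (automatic in characteristic zero), and (ii) the induced morphism of underlying schemes $X\to Y\times_{\mathsf{A}_Q}\mathsf{A}_P$ is \'etale; see \cite[Proposition~(3.4)]{Kato89}, \cite[IV.3.1.8]{Ogus-log-geo}, and \cite[12.3.34]{gabber2018foundationsringtheory}. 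The same criterion applies verbatim on $\zar$. Conditions (i) and (ii) are either monoidal or scheme-theoretic, hence they do not see the difference between the two topologies.

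Suppose first that $f^{\zar}$ is log-smooth. I would fix $x\in X$ and take a chart $\theta\colon Q\to P$ on some Zariski neighbourhood of $x$ satisfying (i) and (ii) on $\zar$. By Lemma~\ref{zar-charts-to-et-charts}, the same homomorphisms $Q\to\mcM_Y^{\zar}$ and $P\to\mcM_X^{\zar}$ remain charts for $\mcM_Y=\varepsilon^*_{\log}\mcM_Y^{\zar}$ and $\mcM_X=\varepsilon^*_{\log}\mcM_X^{\zar}$ respectively, so $\theta$ supplies a chart for $f$. Conditions (i) and (ii) are intrinsic to $\theta$ and to the underlying scheme morphism, hence they persist, and $f$ is log-smooth at $x$.

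For the converse, assume $f$ is log-smooth and fix a scheme-theoretic point $x\in X$. The chart criterion for $f$ yields an \'etale neighbourhood $\pi\colon U\to X$ of $x$, an \'etale neighbourhood $V\to Y$ of $f(x)$ to which $f|_U$ factors, and charts $Q\to\mcM_Y|_V$ and $P\to\mcM_X|_U$ together with a homomorphism $\theta\colon Q\to P$ satisfying (i) and (ii). Being Zariski is preserved under \'etale inverse image of log-structures (as $\varepsilon^*_{\log}$ commutes with \'etale pullback), so $\mcM_X|_U$ and $\mcM_Y|_V$ are still Zariski; hence Lemma~\ref{et-to-zar-charts} identifies $\theta$ with a chart for the induced $f^{\zar}|_U\colon (U,\mcM_X^{\zar}|_U)\to(V,\mcM_Y^{\zar}|_V)$ that again satisfies (i) and (ii). Since log-smoothness on $\zar$ is itself \'etale-local on the underlying schemes (by the same chart criterion), this gives log-smoothness of $f^{\zar}$ at $x$.

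The substantive content is packaged in Lemmas~\ref{et-to-zar-charts} and~\ref{zar-charts-to-et-charts} together with the chart criterion for log-smoothness; I expect the main care---rather than any mathematical depth---to lie in verifying that the Zariski property of $\mcM_X,\mcM_Y$ survives after shrinking to the \'etale neighbourhood produced by the chart criterion, so that Lemma~\ref{et-to-zar-charts} is applicable, and in checking that the monoidal and \'etaleness conditions translate cleanly between the two topologies.
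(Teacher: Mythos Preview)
The paper does not supply a proof of this lemma; it simply records the statement and defers to \cite[12.3.27]{gabber2018foundationsringtheory}. Your approach via the chart criterion is the natural way to unpack that reference, and the direction $f^{\zar}$ log-smooth $\Rightarrow$ $f$ log-smooth is clean as written.

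In the converse direction one step deserves more care than you give it. You produce a chart $\theta\colon Q\to P$ satisfying (i) and (ii) on an \emph{\'etale} neighbourhood $U\to X$, and then assert that ``log-smoothness on $\zar$ is itself \'etale-local on the underlying schemes (by the same chart criterion).'' But the chart criterion for $\logzar$ in \cite[12.3.34]{gabber2018foundationsringtheory} asks for charts on \emph{Zariski} neighbourhoods, so it does not by itself furnish \'etale descent; invoking it here is close to circular. Two ways to close this: either (a) observe that since $\mcM_X^{\zar}$ and $\mcM_Y^{\zar}$ are fine, they admit charts Zariski-locally (e.g.\ \cite[12.1.35]{gabber2018foundationsringtheory}), and rerun the construction of \cite[IV.3.3.1]{Ogus-log-geo} starting from a Zariski chart on $Y$ near $y$---because $\mcM_X$ is Zariski, the extension step that normally forces an \'etale neighbourhood can be carried out on a Zariski neighbourhood of $x$; or (b) argue directly from the infinitesimal lifting definition that for Zariski log-structures the functor $\varepsilon^*_{\log}$ is fully faithful, so a lift in $\loget$ against a strict thickening coming from $\logzar$ descends to a lift in $\logzar$. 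Either route is short, but some sentence of argument is needed beyond a bare appeal to the chart criterion.
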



\section{Log-regularity and toroidal embeddings}\label{log-reg-toroidal-equi-section}

In this section, we show that log-regularity of a log-variety $(X, \mcM)$
is equivalent to giving a toroidal structure on $X$.

\subsection{\'Etale-local charts for toroidal embeddings}
First we show that if $(U\subset X)$ is a toroidal embedding, then
every \'etale-toric chart for $(U\subset X)$ (see Theorem~\ref{general-chart}) gives an \'etale-local chart 
for the log-scheme $(X, \mc{M}_{U/X})$.

\begin{lemma}\label{etale-pullback-chart}
    Let $\ul{f}\colon X\to Y$ be an \'etale morphism of schemes.
    Let $U\subseteq X$ and $V\subseteq Y$ be nonempty Zariski open subsets of $X$ and $Y$ respectively such that 
    \[ U = \ul{f}^{-1}(V). \]
    Then the induced morphism of log-schemes
    \[ f\colon (X, \mc{M}_{U/X})\to (Y, \mc{M}_{V/Y}) \]
    is strict, that is, the induced homomorphism
    \[ \ul{f}^*_{\log}(\mc{M}_{V/Y})\to \mc{M}_{U/X} \]
    of log-structures is an isomorphism.
\end{lemma}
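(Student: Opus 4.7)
The plan is to prove strictness directly at the level of sheaves on the small \'etale site, by showing that the sheaf-theoretic inverse image $\ul{f}^{-1}\mcM_{V/Y}$ already coincides with $\mcM_{U/X}$; no further passage to an associated log-structure is then required.

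First I would recall that since $\ul{f}\colon X\to Y$ is \'etale, composition with $\ul{f}$ realises $\Et(X)$ as a full subcategory of $\Et(Y)$: every $U'\to X$ which is \'etale remains \'etale over $Y$ via the composite $U'\to X\to Y$. Consequently, for any sheaf $\mcF$ on $\Et(Y)$ and any $U'\to X$ in $\Et(X)$, the \'etale inverse image satisfies $(\ul{f}^{-1}\mcF)(U') = \mcF(U'\to Y)$, with no sheafification needed. This identification is compatible with the two structure sheaves, since $\mcO_{\Et(Y)}(U'\to Y) = \Gamma(U',\mcO_{U'}) = \mcO_{\Et(X)}(U'\to X)$.

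Next I would compute, for $U'\to X$ in $\Et(X)$,
\[ (\ul{f}^{-1}\mcM_{V/Y})(U') \,=\, \mcM_{V/Y}(U'\to Y) \,=\, \set{a\in \mcO_Y(U'): a|_{U'\times_Y V}\in \mcO^*}. \]
By the hypothesis $U=\ul{f}^{-1}(V)$ and associativity of fibre products,
\[ U'\times_Y V \,=\, U'\times_X (X\times_Y V) \,=\, U'\times_X U, \]
so the right-hand side above is exactly $\set{a\in \mcO_X(U'): a|_{U'\times_X U}\in \mcO^*} = \mcM_{U/X}(U')$, naturally in $U'$.

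Hence $\ul{f}^{-1}\mcM_{V/Y}$ coincides with $\mcM_{U/X}$ as subsheaves of $\mcO_{\Et(X)}$. Since $\mcM_{U/X}$ is already a log-structure, the associated log-structure of the prelog-structure $\ul{f}^{-1}\mcM_{V/Y}\to \mcO_{\Et(X)}$, which is by definition $\ul{f}^*_{\log}\mcM_{V/Y}$, coincides with $\mcM_{U/X}$, proving the required isomorphism. The only delicate point is the first step identifying the \'etale inverse image with restriction along composition; this is a standard consequence of the adjunction defining $\ul{f}^{-1}$ together with the fact that the underlying continuous functor $\Et(Y)\to \Et(X)$ is just base change by $\ul{f}$, and the rest of the argument is an unravelling of the definitions of compactifying log-structures and inverse image log-structures.
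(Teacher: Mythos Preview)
Your proof is correct and follows essentially the same approach as the paper: both identify the \'etale inverse image $\ul{f}^{-1}\mcM_{V/Y}$ with the restriction of $\mcM_{V/Y}$ along the \'etale morphism, verify that this restriction coincides with $\mcM_{U/X}$, and conclude that the associated log-structure is already $\mcM_{U/X}$. The only difference is that the paper cites \cite[\S II.2, II.3.1 (a), page 68]{etale-cohomology} for the restriction identification, whereas you unpack the computation explicitly via the fibre-product identity $U'\times_Y V = U'\times_X U$.
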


\begin{proof}
    As $U = \ul{f}^{-1}(V)$, by \cite[III.1.6.2]{Ogus-log-geo}, there is a canonical log-morphism of 
    log-schemes $f\colon (X, \mc{M}_{U/X})\to (Y, \mc{M}_{V/Y})$ induced by $\ul{f}$.
    Recall that $\ul{f}^*_{\log}(\mc{M}_{V/Y})$
    is the log-structure associated to the prelog-structure (see \cite[III.1.1.5]{Ogus-log-geo})
    \[ \ul{f}^{-1}(\mc{M}_{V/Y}) \to \ul{f}^{-1}(\mc{O}_{\Et(Y)}) \to \mc{O}_{\Et(X)}. \]
    However, as $X\to Y$ is \'etale,
    $\ul{f}^{-1}(\mc{M}_{V/Y})$ is just the restriction of $\mc{M}_{V/Y}$
    to $(X\to Y)$ by \cite[\S II.2, II.3.1 (a), page 68]{etale-cohomology}, 
    so $\mc{M}_{U/X}\cong \ul{f}^{-1}(\mc{M}_{V/Y})$.  Therefore,
    $\ul{f}^{-1}(\mc{M}_{V/Y})\to \mc{O}_{\Et(X)}$ is isomorphic to $\mc{M}_{U/X}\to \mc{O}_{\Et(X)}$.
    In particular, $\ul{f}^{-1}(\mc{M}_{V/Y})\to \mc{O}_{\Et(X)}$ is already a log-structure, hence $f$ is strict.
\end{proof}


\begin{lemma}\label{toroidal-charts}
    Let $(U\subset X)$ be a toroidal embedding.  Let 
    \[\xymatrix{
     & (U'\subset X')\ar[rd]^q\ar[ld]_p & \\
    (U\subset X) & & (\mbb{T}_{\sigma}\subset W_{\sigma})
    }\]
    be an \'etale-toric chart for $(U\subset X)$ (see Theorem~\ref{general-chart}), that is,
    \begin{itemize}
        \item $(U'\subset X')$ is a strict toroidal embedding,
        \item $W_{\sigma}$ is a normal affine toric variety given by $(N, \sigma)$,
        \item $\mbb{T}_{\sigma}$ is the torus of $W_{\sigma}$,
        \item $p\colon X'\to X$ and $q\colon X'\to W_{\sigma}$ are \'etale morphisms of varieties, and
        \item $p^{-1}(U)$ and $q^{-1}(\mbb T_{\sigma})$ are equal to $U'$.
    \end{itemize}
    Let $M_{\sigma} \coloneqq \sigma^{\vee}\cap M$, where $M$ is the dual lattice of $N$, 
    and $\sigma^{\vee}$ is the dual cone of $\sigma$ in $M_{\RR} \coloneqq M\otimes_{\ZZ}\RR$.  
    Let (see \S\ref{log-monoids})
    \[ \mathsf{A}_{M_{\sigma}} \coloneqq \spec (M_{\sigma} \to \mbb{K} [M_{\sigma}]). \]
    Then the log-morphism of log-schemes induced by $q$,
    \[ (X', \mc{M}_{U'/X'}) \to \mathsf{A}_{M_{\sigma}}, \]
    is an \'etale-local chart for the log-scheme $(X, \mc{M}_{U/X})$.
\end{lemma}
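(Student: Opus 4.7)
The plan is to unpack the definition of an \'etale-local chart and verify its two ingredients: that $p\colon X'\to X$ is an \'etale neighbourhood identifying $\mc{M}_{U'/X'}$ with the pullback of $\mc{M}_{U/X}$, and that the log-morphism $(X',\mc{M}_{U'/X'})\to \mathsf{A}_{M_\sigma}$ induced by $q$ is a chart for $\mc{M}_{U'/X'}$.

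First I would invoke Lemma~\ref{etale-pullback-chart} for the \'etale morphism $p\colon X'\to X$, using the hypothesis $U'=p^{-1}(U)$, to conclude that the induced log-morphism $p\colon (X',\mc{M}_{U'/X'})\to (X,\mc{M}_{U/X})$ is strict, i.e.\ $p^*_{\log}\mc{M}_{U/X}\cong \mc{M}_{U'/X'}$. This reduces the problem to showing that the $q$-induced morphism is a chart for $\mc{M}_{U'/X'}$ on the \'etale neighbourhood $X'\to X$.

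Next I would use Lemma~\ref{toric-log-regular}(1) to identify the log-scheme $\mathsf{A}_{M_\sigma}$ with $(W_\sigma,\mc{M}_{\mbb T_\sigma/W_\sigma})$, noting that the canonical homomorphism $M_\sigma\to \mbb K[M_\sigma]$ is a global chart for this log-scheme. A second application of Lemma~\ref{etale-pullback-chart}, this time to the \'etale morphism $q\colon X'\to W_\sigma$ with $U'=q^{-1}(\mbb T_\sigma)$, shows that the log-morphism $q\colon (X',\mc{M}_{U'/X'})\to (W_\sigma,\mc{M}_{\mbb T_\sigma/W_\sigma})$ is strict, so $q^*_{\log}\mc{M}_{\mbb T_\sigma/W_\sigma}\cong \mc{M}_{U'/X'}$.

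Since strict pullback of a chart is again a chart, combining the global chart $M_\sigma\to \mc{M}_{\mbb T_\sigma/W_\sigma}$ on $\mathsf{A}_{M_\sigma}$ with the strict log-morphism $q$ produces the required chart $M_\sigma\to \mc{M}_{U'/X'}(X')$; equivalently, $(X',\mc{M}_{U'/X'})\to \mathsf{A}_{M_\sigma}$ is a chart for $(X',\mc{M}_{U'/X'})$, and together with $p\colon X'\to X$ it constitutes an \'etale-local chart for $(X,\mc{M}_{U/X})$. I do not anticipate a genuine obstacle: the entire argument amounts to two applications of Lemma~\ref{etale-pullback-chart} together with the toric computation already recorded in Lemma~\ref{toric-log-regular}, so the only point requiring care is that the charted open sets and torus preimages match, which is exactly what the definition of an \'etale-toric chart supplies via the equality $p^{-1}(U)=q^{-1}(\mbb T_\sigma)=U'$.
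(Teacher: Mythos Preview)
Your proposal is correct and follows essentially the same approach as the paper: both arguments consist of two applications of Lemma~\ref{etale-pullback-chart}, one for $p$ and one for $q$, to conclude that the log-morphisms $(X',\mc{M}_{U'/X'})\to (X,\mc{M}_{U/X})$ and $(X',\mc{M}_{U'/X'})\to \mathsf{A}_{M_\sigma}$ are strict. Your version is slightly more explicit in invoking Lemma~\ref{toric-log-regular}(1) to identify the log-structure of $\mathsf{A}_{M_\sigma}$ with the compactifying one, which is indeed needed to apply Lemma~\ref{etale-pullback-chart} to $q$; the paper leaves this implicit.
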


\begin{proof}
    By Lemma~\ref{etale-pullback-chart}, the log-morphisms 
    $(X', \mc{M}_{U'/X'}) \to (X,\mcM_{U/X})$ and $(X', \mc{M}_{U'/X'})\to \mathsf{A}_{M_{\sigma}}$
    between log-schemes are strict, hence the strict log-morphism 
    $(X', \mc{M}_{U'/X'})\to \mathsf{A}_{M_{\sigma}}$ is an \'etale-local chart for $(X, \mc{M}_{U/X})$.
\end{proof}


\subsection{Equivalence of log-regularity and toroidal embeddings}
Now we show the equivalence of log-regularity and toroidal embeddings 
on both the small \'etale sites and Zariski sites.
This is outlined in \cite[(1.7), (1.8)]{Kato-toric}.

\begin{theorem}\label{toroidal-to-log-reg}
    Let $(U\subset X)$ be a toroidal embedding, and let $\mc{M}_{U/X}$ 
    be the associated compactifying log-structure on $\Et(X)$.  
    Then $(X, \mc{M}_{U/X})$ is a log-regular log-variety
    in $\loget$.  Moreover, if $(U\subset X)$ is a strict toroidal embedding,
    then $\mcM_{U/X}$ is Zariski, and $(X, \mc{M}_{U/X}^{\zar})$ is log-regular in $\logzar$, where
    $\mc{M}_{U/X}^{\zar}$ is the Zariski compactifying log-structure on $\zar(X)$; see \S\ref{compact-log-struc-defn}.
\end{theorem}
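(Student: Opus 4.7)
The strategy is to reduce log-regularity of $(X, \mc{M}_{U/X})$ to the known log-regularity of the standard log-toric model $\mathsf{A}_{M_\sigma}$ via the \'etale-toric chart existence theorems established earlier.

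For the first assertion, by Theorem~\ref{general-chart}, every closed point $x \in X$ admits an \'etale-toric chart $p_x\colon V_x \to X$, $q_x\colon V_x \to W_{\sigma_x}$ with $p_x^{-1}(U) = q_x^{-1}(\mbb T_{\sigma_x})$. Setting $V \coloneqq \bigsqcup_x V_x$, the combined morphism $V \to X$ is surjective \'etale. By Lemma~\ref{toroidal-charts} (and Lemma~\ref{etale-pullback-chart}), the induced log-morphism $(V_x, \mc{M}_{p_x^{-1}(U)/V_x}) \to \mathsf{A}_{M_{\sigma_x}}$ is strict with underlying \'etale morphism, hence log-\'etale. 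Since $\mathsf{A}_{M_{\sigma_x}} \to \spec \mbb K$ is log-smooth by Lemmas~\ref{toric-log-regular} and~\ref{log-regular-equi-smooth}, and log-smoothness is stable under composition, $(V_x, \mc{M}_{p_x^{-1}(U)/V_x}) \to \spec \mbb K$ is log-smooth, so by Lemma~\ref{log-regular-equi-smooth} the log-variety $(V_x, \mc{M}_{p_x^{-1}(U)/V_x})$ is log-regular in $\loget$. Consequently $(V, \mc{M}_V)$ is log-regular, and applying Lemma~\ref{log-regular-etale-local} to the surjective \'etale morphism $V \to X$ yields log-regularity of $(X, \mc{M}_{U/X})$.

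For the strict case, I would invoke Theorem~\ref{strict-chart} in place of Theorem~\ref{general-chart}: for every closed point $x$ there is a Zariski open neighborhood $U_x \subset X$ and an \'etale morphism $\pi_x\colon U_x \to W_\sigma$ with $\pi_x^{-1}(\mbb T_\sigma) = U \cap U_x$. By Lemmas~\ref{etale-pullback-chart} and~\ref{toric-log-regular}(1), $\pi_x$ induces a strict log-morphism $(U_x, \mc{M}_{U \cap U_x/U_x}) \to \mathsf{A}_{M_\sigma}$. Since $\mathsf{A}_{M_\sigma}$ carries the global chart $M_\sigma \to \mbb K[M_\sigma]$ subordinate to an fs monoid, Lemma~\ref{et-to-zar-charts} shows that the log-structure of $\mathsf{A}_{M_\sigma}$ is Zariski. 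Because being Zariski is preserved under \'etale pullback (the functor $\varepsilon^*_{\log}$ of \S\ref{zariski-log-structures-defn-sec} commutes with restriction along \'etale morphisms), $\mc{M}_{U/X}|_{U_x} = \mc{M}_{U \cap U_x/U_x}$ is Zariski on each $U_x$; as the $\{U_x\}$ form a Zariski open cover and the property is local, $\mc{M}_{U/X}$ is Zariski on all of $X$. Combining this with the first assertion and invoking Lemma~\ref{log-regular-two-tops} yields log-regularity of $(X, \mc{M}_{U/X}^{\zar})$ in $\logzar$.

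The main subtlety I anticipate is the verification that being Zariski is preserved under \'etale pullback and satisfies Zariski descent; this is not stated as a separate lemma in the paper, but follows from the sheaf-theoretic construction of $\varepsilon^*_{\log}$ in \S\ref{zariski-log-structures-defn-sec} together with the description of inverse images of log-structures along \'etale morphisms. Apart from this and the bookkeeping required to translate between the log-smoothness and log-regularity viewpoints via Lemma~\ref{log-regular-equi-smooth}, the argument is a direct assembly of the chart existence theorems with the log-regularity of $\mathsf{A}_{M_\sigma}$.
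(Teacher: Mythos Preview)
Your proposal is correct and follows essentially the same approach as the paper: use the \'etale-toric chart existence theorems to reduce to the log-regularity of $\mathsf{A}_{M_\sigma}$ (Lemma~\ref{toric-log-regular}), then invoke the \'etale-local nature of log-regularity (Lemma~\ref{log-regular-etale-local}) and, in the strict case, Lemma~\ref{log-regular-two-tops}. Your small detour through log-smoothness in the first part is harmless --- it is in fact how Lemma~\ref{toric-log-regular} is proved --- and the paper simply skips straight to ``$\mathsf{A}_{M_\sigma}$ is log-regular, log-regularity is \'etale-local, done.''

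The one place you take an unnecessary detour is in the strict case. You apply Lemma~\ref{et-to-zar-charts} to $\mathsf{A}_{M_\sigma}$ and then argue that ``being Zariski'' is preserved under \'etale pullback and satisfies Zariski descent --- the subtlety you flag at the end. This works, but it is avoidable: the strict \'etale-toric chart $\pi_x\colon U_x \to W_\sigma$ is itself (via Lemma~\ref{toroidal-charts}) a chart for $(U_x, \mcM_{U/X}|_{U_x})$ subordinate to the fs monoid $M_\sigma$, so Lemma~\ref{et-to-zar-charts} applies \emph{directly} to $(U_x, \mcM_{U/X}|_{U_x})$ and shows that $\mcM_{U/X}|_{U_x}$ is Zariski. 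Since the $U_x$ form a Zariski open cover of $X$, the conclusion that $\mcM_{U/X}$ is Zariski follows immediately. This is exactly what the paper does (it also remarks that the same conclusion follows from \cite[III.1.6.5]{Ogus-log-geo}, since the components of $X\setminus U$ are normal, hence geometrically unibranch). So your anticipated subtlety evaporates once you notice that the chart is already on $U_x$, not just on $\mathsf{A}_{M_\sigma}$.
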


\begin{proof}
    Let $(U\subset X)$ be a toroidal embedding.  
    By Theorem~\ref{general-chart}, there is an \'etale-toric chart 
    for $(U\subset X)$ near any closed point of $X$ as in Lemma~\ref{toroidal-charts}.
    Keep the notations in Lemma~\ref{toroidal-charts}.
    Since $\mathsf{A}_{M_{\sigma}}$ is log-regular by Lemma~\ref{toric-log-regular},
    and since being log-regular is an \'etale-local property by Lemma~\ref{log-regular-etale-local}, 
    we see that $(X, \mc{M}_{U/X})$ is log-regular in $\loget$.
    
    Now assume that $(U\subset X)$ is a strict toroidal embedding.  
    By Theorem~\ref{strict-chart}, it admits a strict \'etale-toric chart $q\colon U_x\to W_{\sigma}$
    near every closed point $x\in X$, 
    that is, the morphism $p$ in Lemma~\ref{toroidal-charts} is the identity map onto $U_x$.
    By Lemma~\ref{et-to-zar-charts}, we see that $\mcM_{U/X}$ is Zariski.
    This also follows from \cite[III.1.6.5]{Ogus-log-geo} as the components of $X\setminus U$ are normal 
    whence geometrically unibranch; see \cite[Chap. 0, (23.2.1)]{EGA-IV-1}.
    Finally, by Lemma~\ref{log-regular-two-tops}, we can deduce that 
    $(X, \mc{M}_{U/X}^{\zar})$ is log-regular in $\logzar$. 
\end{proof}


Recall that log-regular structures are compactifying; see \S\ref{log-regular-compactifying-section}.
The following result shows that such a compactifying log-structure 
must be induced by a toroidal embedding, that is, 
the converse of Theorem~\ref{toroidal-to-log-reg} also holds.

\begin{theorem}\label{toroidal-regular}
    Let $(X, \mc{M})$ be a log-regular log-variety,
    where $\mcM$ is a log-structure on $\Et(X)$ (respectively, on $\zar(X)$).
    Then $(X^*\subset X)$ is a toroidal embedding (respectively, a strict toroidal embedding), 
    where $X^*$ is the maximal Zariski open subset of $X$ on which $\mc{M}$ is trivial.  
\end{theorem}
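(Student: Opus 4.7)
My plan is to use Kato's structure theorem for log-regular log-schemes, which says that at every closed point the completed local ring of $X$ is isomorphic to that of a normal affine toric variety at a closed point, in a way that identifies the log-ideal with the ideal of the toric boundary. Once such a formal-local toric model is exhibited at every closed point, the claim that $(X^*\subset X)$ is a toroidal embedding is immediate from the definition in \S\ref{rel-toroidal-couple}.

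First I would fix a closed point $x\in X$ and pass to the completion $\wh{\mcO}_{X,x}$ in the Zariski case, or to $\wh{\mcO}_{X,\ol{x}}$ in the \'etale case. In either setting, by \cite[(3.2)]{Kato-toric} (and its \'etale-topology counterpart in \cite[III.1.11.1]{Ogus-log-geo}), log-regularity at $x$ produces an fs sharp monoid $P$, essentially the characteristic monoid $\ol{\mcM}_x$, with $P^{\gp}$ torsion free, a non-negative integer $r$, and an isomorphism of $\mbb K$-algebras
\[
\wh{\mcO}_{X,x}\;\cong\;\wh{\mbb K[Q]}_{\mf{m}_w},
\]
where $Q \coloneqq P\oplus \NN^{r}$ is again an fs monoid with $Q^{\gp}$ torsion free, $W \coloneqq \spec \mbb K[Q]$ is a normal affine toric variety, and $w\in W$ is the torus-fixed closed point. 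This produces the data $\Set{(W,C),w}$, with $C$ the toric boundary of $W$, as a candidate local toric model at $x$.

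Next I would verify that the isomorphism above matches the defining ideals of $X\setminus X^*$ and of $C$. By \S\ref{log-regular-compactifying-section}, log-regularity forces $\mcM$ to coincide with the compactifying log-structure $\mcM_{X^*/X}$, and on the toric side Lemma~\ref{toric-log-regular} identifies the log-structure of $\mathsf{A}_Q$ with the compactifying log-structure of $\mbb T_W\hookrightarrow W$. Because Kato's isomorphism is constructed to be strict for the log-structures, it pulls the ideal of $C$ back to the ideal of $X\setminus X^*$. This completes the local toric model and establishes the \'etale case: $(X^*\subset X)$ is a toroidal embedding.

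For the Zariski case I must upgrade ``toroidal'' to ``strict toroidal,'' i.e., verify that every irreducible component of $X\setminus X^*$ is normal. Because $\mcM$ is a log-structure on $\zar(X)$, the formal-local model takes place at the scheme-theoretic point $x$ rather than at a geometric point, so the irreducible components of $X\setminus X^*$ through $x$ correspond bijectively to the height-one primes of $\wh{\mcO}_{X,x}$ containing $I(x,\mcM)$; under Kato's isomorphism they in turn correspond to the irreducible components of $C$ through $w$, which are themselves normal toric subvarieties. Invoking Lemma~\ref{local-irreducibility} to descend integrality and normality from the completion to the local rings of these components, I conclude that every irreducible component of $X\setminus X^*$ is normal, so $(X^*\subset X)$ is a strict toroidal embedding. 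The main obstacle, as I see it, is not extracting the toric model itself, which is a direct appeal to Kato's theorem, but rather the bookkeeping needed to pad the characteristic monoid by an $\NN^{r}$ factor so that $\dim W$ matches $\dim\mcO_{X,x}$, and checking that the padded formal isomorphism remains strict for the log-structures---this strictness is precisely what links the formal-local picture to the global compactifying log-structure $\mcM_{X^*/X}$ and what forces the strict/non-strict dichotomy to match the Zariski/\'etale dichotomy in the statement.
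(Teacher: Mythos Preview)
Your overall plan---exhibit a formal-local toric model at every closed point via Kato's structure theorem \cite[(3.2)]{Kato-toric}---is the right idea and is close to the paper's argument. However, there is a genuine gap in the execution, precisely at the point you flag as ``the main obstacle.''

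The error is the choice of $w$ as the torus-fixed point of $W=\spec\mbb K[Q]$ with $Q=P\oplus\NN^r$. At that point \emph{all} components of the toric boundary $C$ pass through $w$, including the $r$ coordinate hyperplanes coming from the $\NN^r$ factor. Kato's isomorphism sends the generators $T_1,\dots,T_r$ of the $\NN^r$ summand to lifts $t_1,\dots,t_r$ of a regular system of parameters of $\mcO_{X,x}/I(x,\mcM)$; these $t_i$ are transverse to the log-structure and do \emph{not} lie in the ideal of $D=X\setminus X^*$. Consequently the ideal $I_C\wh{\mcO}_{W,w}$ does not correspond to $I_D\wh{\mcO}_{X,x}$, and $\{(W,C),w\}$ is not a local toric model in the sense of \S\ref{rel-toroidal-couple}. (A minimal example: $X=\mbb A^2$, $D=\{x_1=0\}$, $x=(0,0)$; here $P=\NN$, $r=1$, $W=\mbb A^2$, and at the origin $I_C=(x_1x_2)\neq(x_1)=I_D$.) Your claim that ``Kato's isomorphism is strict for the log-structures'' is true only if $W$ carries the log-structure pulled back from $\mathsf A_P$, but then its non-trivial locus is a proper subset of $C$. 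The fix is to move $w$ to a point where the $\NN^r$-coordinates are units, e.g.\ $w=(p_\tau,1,\dots,1)$; the paper accomplishes this by the shift $\rho\colon T_i\mapsto T_i+1$ in Step~4.

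There is also a secondary gap in your strictness argument for the Zariski case. You assert a bijection between irreducible components of $D$ through $x$ and height-one primes of $\wh{\mcO}_{X,x}$ containing $I_D\wh{\mcO}_{X,x}$, but this bijection is equivalent to each component being unibranch at $x$---essentially what you are trying to prove. Lemma~\ref{local-irreducibility} goes in the wrong direction for this step (it passes normality \emph{to} the completion, not from it); what you need is \cite[Scholie (7.8.3)]{EGA-IV-II}, and even then you must first rule out that a single component $D_i$ acquires multiple branches in $\wh{\mcO}_{X,x}$. The paper sidesteps this entirely: rather than working only at the level of completions, it promotes the chart $P\to\mcO_X$ (which in the Zariski case is defined on a Zariski neighbourhood) to an actual \'etale morphism $q\colon Y\to W$ with $Y\setminus Y^*=q^{-1}(C)$, and then normality of the components of $Y\setminus Y^*$ follows immediately because \'etale pullbacks of normal varieties are normal. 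This \'etale-chart construction is the substantive content beyond Kato's formal statement, and it is what makes both the matching of boundary ideals and the strictness conclusion transparent.
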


\begin{proof}
    \emph{Step 1}.
    Let $(X, \mc{M})$ be a log-regular log-variety, where $\mcM$ is a log-structure on $\Et(X)$.
    For an arbitrary geometric point $\ol{x}$ of $X$, 
    since charts of log-structures exist \'etale-locally by \cite[(2.10)]{Kato89},
    let $Y\to X$ be an \'etale neighbourhood of $\ol{x}$
    such that the restriction $\mc{M}_Y$ of $\mc{M}$ to $Y$ admits a global chart.
    Denote by $\mc{M}_Y^{\zar}$ the restriction of $\mc{M}_Y$ to $\zar(Y)$.
    By Lemma~\ref{et-to-zar-charts},
    the log-structure $\mc{M}_Y$ is Zariski, that is, $\mc{M}_Y = \varepsilon^* \mc{M}_Y^{\zar}$,
    where $\varepsilon\colon Y_{\et}\to Y_{\zar}$ is the natural projection of topoi; see \S\ref{et-Zar}. 

    Let $y\in Y$ be an arbitrary closed point of $Y$.
    Denote by $P \coloneqq \mc{M}_{Y,y}^{\zar}/\mc{O}_{Y,y}^*$ the sharpening of $\mc{M}_{Y,y}^{\zar}$.
    By \cite[(1.6)]{Kato-toric}, we have that
    \begin{itemize}
        \item $P$ is an fs monoid,
        \item $P^{\gp}$ is torsion free (see \cite[I.1.3.5]{Ogus-log-geo}),
        \item there is an open neighbourhood $U\subseteq Y$ of $y$ and a homomorphism
              \[ \varphi\colon P\to \mc{M}_Y^{\zar}|_U  \]
              such that the induced composite morphism $P\xrightarrow{\varphi_y} \mc{M}_{Y,y}^{\zar}\to P$ is the identity, and
        \item there is an open neighbourhood $V\subseteq U$ of $y$ such that $\mc{M}_Y^{\zar}|_V$ is 
        the log-structure associated to the prelog-structure 
              \[ P\xrightarrow{\varphi|_V} \mc{M}_Y^{\zar}|_V \to \mc{O}_V. \]
    \end{itemize}
    Replacing $Y$ by $V$, we can assume that the homomorphism $\varphi$ is defined on the whole $Y$ and 
    that all the properties listed above are satisfied. 
    In particular, $\varphi\colon P\to \mc{M}_Y^{\zar}$ is a chart for the log-variety $(Y, \mc{M}_Y^{\zar})$
    (that is \emph{neat} at $y$; see \cite[\S II.2.3]{Ogus-log-geo}).  Denote by 
    \begin{equation}
        \varphi_Y\colon P\to \mc{O}_Y \label{map-on-OY}
    \end{equation}
    the composite homomorphism
    $P\xrightarrow{\varphi} \mc{M}_Y^{\zar} \to \mc{O}_Y$.  
    As $\varphi$ is a chart, $\varphi_Y$ induces a strict log-morphism of log-schemes
    \begin{equation}
        a\colon (Y, \mcM_Y^{\zar})\to \mathsf{A}_P^{\zar}, \label{chart-on-Y}
    \end{equation}
    where $\mathsf{A}_P^{\zar}$ is the log-variety whose log-structure is 
    the restriction of the log-structure of $\mathsf{A}_P$
    to $\zar(\AAA_P)$; see \S\ref{log-monoids}.  
    In particular, the morphism of underlying varieties
    \begin{equation}
        \ul{a}\colon Y\to \AAA_P, \label{proj-to-AAAP}
    \end{equation}
    is the morphism of varieties induced by \eqref{map-on-OY}. 
    Denote by $Y^*$ the maximal open subset of $Y$ on which $\mcM_Y^{\zar}$ is trivial.
    As $a$ is strict, by \cite[III.1.2.10, III.1.2.11]{Ogus-log-geo},
    $Y^*$ is the inverse image of $\spec \mbb K[P^{\gp}]\subset \AAA_P$ under $\ul{a}$.  

    For the Zariski case, if $\mcM$ is a log-structure on $\zar(X)$,
    by \cite[(1.6)]{Kato-toric}, we can assume that $Y\to X$ is an open immersion.
    Thus, for both the cases of small \'etale sites and Zariski sites,
    it suffices to show that the $(Y^*\subset Y)$ as above is a strict toroidal embedding. 

    \medskip
    
    \emph{Step 2}.
    Let $I(y, \mcM_Y^{\zar})$ be the ideal of $\mc{O}_{Y,y}$ generated 
    by the image of $\mc{M}_{Y,y}^{\zar}\setminus \mc{O}_{Y,y}^*$ in $\mc{O}_{Y,y}$.
    By definition of log-regularity \cite[(2.1)]{Kato-toric}, 
    $\mc{O}_{Y,y}/I(y, \mcM_Y^{\zar})$ is a regular local ring.
    Let $t_1', \dots, t_r'$ be a regular system of parameters of $\mc{O}_{Y,y}/I(y, \mcM_Y^{\zar})$, then
    \[ r = \dim \mc{O}_{Y,y} - \rank_{\ZZ} (P^{\gp}). \]
    Note that here we have applied the fact 
    $(\mc{M}_{Y,y}^{\zar})^{\gp}/\mcO_{Y,y}^*\cong (\mc{M}_{Y,y}^{\zar}/\mcO_{Y,y}^*)^{\gp} = P^{\gp}$,
    which is a result of \cite[I.1.3.5]{Ogus-log-geo}.

    Let $t_1,\dots, t_r$ be a set of preimages of $t_1', \dots, t_r'$ in $\mc{O}_{Y,y}$.
    Shrinking $Y$ near $y$, we can assume that $t_1,\dots, t_r$ are regular functions on the whole $Y$.
    By \cite[(3.2)]{Kato-toric}, the $\mbb K$-morphism of complete local rings induced by $\varphi_Y$ in \eqref{map-on-OY},
    \begin{equation}
        \wh{\psi}\colon \mbb K [[P]][[T_1,\dots, T_r]] \to \wh{\mc{O}}_{Y, y},\,T_i\mapsto t_i, \label{map-on-completion}
    \end{equation}
    is an isomorphism.  Note that the ring $\mbb K[[P]]$ is defined by
    \[ \mbb K [[P]] \coloneqq \varprojlim_{n\in \NN} \frac{\mbb K[P]}{I_P^n}, \]
    where $I_P$ is the maximal ideal of $\mbb K[P]$ 
    generated by $P\setminus P^*$; see \cite[(1.1)]{Kato-toric}.  Thus, $\wh{\mc{O}}_{Y,y}$
    is isomorphic to the completion of 
    \[ \mbb K[P][T_1, \dots, T_r] = \mbb K [P]\otimes_{\mbb K} \mbb K [T_1, \dots, T_r]
    \cong \mbb K [P\oplus \NN^r] \]
    with respect to the maximal ideal 
    \begin{equation}
        \mf m \coloneqq I_P\cdot \mbb K [P\oplus \NN^r] + (T_1, \dots, T_r)\cdot \mbb K [P\oplus \NN^r], \label{max-ideal}
    \end{equation}
    where $P\oplus \NN^r$ is direct sum of the monoids $P$ and $\NN^r$; cf. \cite[I.1.1.5]{Ogus-log-geo}. 
    As $P$ and $\NN^r$ are fs monoids, $P\oplus \NN^r$ is also an fs monoid.  Then
    \[ (P\oplus \NN^r)^{\gp} = P^{\gp}\oplus \ZZ^r \]
    is a torsion free finitely generated Abelian group, hence a lattice.  
    By \cite[Theorem 1.3.5]{CLS:toric},
    \[ W \coloneqq \spec \mbb K [P\oplus \NN^r] \]
    is a normal affine toric variety (of dimension $\dim Y$) with torus $\mbb T_W = \spec \mbb K [P^{\gp}\oplus \ZZ^r]$. 

    \medskip

    \emph{Step 3}.
    We can assume $Y$ is affine up to shrinking near $y$.  Denote by 
    \[ \psi\colon \mbb K[P\oplus \NN^r]\to \mbb K[Y] \]
    the homomorphism induced by $\varphi_Y$ and $\wh{\psi}$ 
    (see \eqref{map-on-OY} and \eqref{map-on-completion}), that is,
    $\psi$ and $\varphi_Y$ coincide on $P$, and $\psi$ maps every $T_i$ to $t_i$ as $\wh{\psi}$ does.
    Recall that $\mf{m}$ is the maximal ideal of $\mbb K [P\oplus \NN^r]$ in \eqref{max-ideal}, 
    which defines a closed point $w_0\in W$.  By \eqref{map-on-completion}, we have that
    \begin{itemize}
        \item the extended ideal $\psi(\mf m)\cdot \mbb K[Y]$ is contained in the maximal ideal $\mf{n}_y\subset \mbb K[Y]$, which defines the closed point $y\in Y$, and
        \item $\psi(\mf m)\cdot \wh{\mcO}_{Y,y}$ and $\mf{n}_y\wh{\mcO}_{Y,y}$ coincide in $\wh{\mcO}_{Y,y}$, which is the maximal ideal of $\wh{\mcO}_{Y,y}$.
    \end{itemize}
    Then by \cite[Theorem 7.5]{CA-Mat}, we can conclude that
    \begin{equation}
        \psi(\mf m)\cdot \mbb K[Y] = \mf{n}_y \label{maximal-ideals-equal}
    \end{equation}
    up to shrinking $Y$ near $y$.  Denote by
    \[ q_0\colon Y\to W \]
    the morphism of affine varieties corresponding to $\psi$.  Then \eqref{maximal-ideals-equal} shows that
    $q_0(y)=w_0$, and that $q_0$ is unramified at $y$.  Moreover, by \eqref{map-on-completion},
    and by \cite[(3), page 46]{CA-Mat}, $q_0$ is flat at $y$.
    Thus, up to shrinking $Y$ near $y$, we see that $q_0\colon Y\to W$ is \'etale.  

    \medskip

    \emph{Step 4}.
    Let $\rho\colon W\to W$ be the isomorphism of varieties induced by the homomorphism
    \[ \mbb K[P]\otimes_{\mbb K} \mbb K[T_1, \dots, T_r] \to \mbb K[P]\otimes_{\mbb K} \mbb K[T_1, \dots, T_r],\,\,T_i\mapsto T_i+1\text{ for }1\le i\le r,  \]
    which is identity on $\mbb K[P]$.  Denote by
    \[ q\colon Y\to W \]
    the composite morphism $Y\xrightarrow{q_0}W\xrightarrow{\rho} W$, which is also \'etale.  Then
    \[ q(y) = \rho(w_0) = w \]
    is a closed point contained in the open subset 
    $W^{\circ} \coloneqq \spec \mbb K[P]\times_{\spec \mbb K} \spec \mbb K[\ZZ^r]$ of $W$.
    Up to shrinking $Y$ near $y$, we can assume that the image $q(Y)$ is contained in $W^{\circ}$.
    
    Moreover, recall that $\psi$ and $\varphi_Y$ agree on $P$, 
    hence the morphism of varieties $\ul{a}$ in \eqref{proj-to-AAAP}
    is equal to the composite morphism 
    \[ Y\xrightarrow{q} W \to \spec \mbb K[P], \]
    where the second morphism is the canonical projection 
    \[ \spec \mbb K [P]\times_{\spec \mbb K} \spec \mbb K[\NN^r] \to \spec \mbb K[P]. \]
    Since $Y^*$ is the inverse image of $\spec \mbb K[P^{\gp}]$ (see Step 1),
    $Y^*$ is the inverse image of 
    \[ \spec \mbb K[P^{\gp}\oplus \ZZ^r] = W^{\circ} \cap \big(\spec \mbb K [P^{\gp}]\times_{\spec \mbb K}\spec \mbb K[\NN^r]\big) \]
    under the morphism $q\colon Y\to W$.  In other words, we have
    \[ Y^* = q^{-1}(\mbb T_W). \]
    Since $W$ is a normal toric variety, every irreducible component of $W\setminus \mbb T_W$
    is normal; see \cite[Proposition 3.2.7]{CLS:toric}.  As $q\colon Y\to W$
    is \'etale, we see that every irreducible component of $Y\setminus Y^*$ is also normal.
    Moreover, it is evident that $q\colon Y\to W$ is a strict \'etale-toric chart for $(Y^*\subset Y)$
    near $y$ in the sense of Definition~\ref{strict-chart-defn}.
    Thus, we can conclude that $(Y^*\subset Y)$ is a strict toroidal embedding.
\end{proof}


\section{Log-smoothness and toroidal morphisms}\label{etale-charts-toroidal-morphisms-section}

In this section, we show the equivalence between log-smooth log-morphisms 
and dominant toroidal morphisms.
As a result, we show that dominant toroidal morphisms 
admit \'etale-toric charts as in \cite[\S 5.3]{ACP-tropical-curves}.
The results are proved in the sequence:
\begin{itemize}
    \item dominant toroidal morphisms are log-smooth (see Theorems~\ref{toroidal-to-sm-zar}, \ref{toroidal-to-sm-et}),
    \item log-smooth log-morphisms admit \'etale-local charts (see Theorem~\ref{etale-chart-log-sm-thm}), which implies the equivalence between dominant toroidal morphisms and log-smoothness, and
    \item dominant toroidal morphisms admit \'etale-toric charts as in \cite[\S 5.3]{ACP-tropical-curves} (see Theorems~\ref{etale-chart-toroidal-morphism-thm}, \ref{etale-chart-toroidal-morphism-thm-zar}). 
\end{itemize}


\subsection{Ring of dual numbers}

Let $R$ be a ring and $M$ an $R$-module.
The \emph{ring of dual numbers on $M$} is the set $R\oplus M$ with componentwise addition 
and multiplication given by 
\[ (r, m)\cdot (r', m') \coloneqq (rr', r.m' + r'.m), \]
where $r,r'\in R$ and $m,m'\in M$.  
We denote this ring by $R[M]$ or $R\ltimes M$; see \cite{tri-abelian-book}.
The multiplicative identity of $R[M]$ is $(1, 0)$.
It is evident that $R[M]$ is an $R$-algebra with structure homomorphism $R\to R[M],\,r\mapsto (r, 0)$. 
The \emph{augmentation} $\pi\colon R[M]\to R$ is the ring homomorphism given by $(r, m)\mapsto r$.
The $R$-module $M$ can identified with the kernel of $\pi$, which is also denoted by $M$.
Then $M$ is an ideal of $R[M]$ satisfying $M^2 = 0$.

\begin{lemma}\label{dual-num-complete}
    Let $R$ be a complete local ring and $M$ a finite $R$-module.
    Then the ring of dual numbers $R[M]$ is also a complete local ring, and
    the canonical homomorphism $R\to R[M]$ of $R$-algebras is local.
\end{lemma}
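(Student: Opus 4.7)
The plan is to identify the unique maximal ideal of $R[M]$, verify that $R[M]$ is Noetherian, and then deduce completeness from the fact that $M^{2}=0$ makes the $\mathfrak{n}$-adic filtration on $R[M]$ split in terms of the $\mathfrak{m}$-adic filtrations on $R$ and on $M$. Let $\mathfrak{m}\subset R$ denote the maximal ideal, and set $\mathfrak{n}\coloneqq \mathfrak{m}\oplus M$, which is the preimage of $\mathfrak{m}$ under the augmentation $\pi\colon R[M]\to R$; the residue ring $R[M]/\mathfrak{n}\cong R/\mathfrak{m}$ is a field. Since $M$ is a nilpotent ideal of $R[M]$ (as $M^{2}=0$), it lies in every maximal ideal, so the maximal ideals of $R[M]$ correspond bijectively to those of $R[M]/M\cong R$; as $R$ is local, $\mathfrak{n}$ is the unique maximal ideal. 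Noetherianness follows because $R[M]=R\oplus M$ is a finite $R$-module (this is where the finiteness hypothesis on $M$ enters), so every ideal of $R[M]$ is a finitely generated $R$-submodule.

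For completeness, I would compute the powers of $\mathfrak{n}$. Using $M^{2}=0$, a short induction gives
\[ \mathfrak{n}^{k}\;=\;\mathfrak{m}^{k}\oplus \mathfrak{m}^{k-1}M \qquad\text{for every }k\ge 1, \]
so that the inverse system of quotients splits as $R$-modules:
\[ R[M]/\mathfrak{n}^{k}\;\cong\; R/\mathfrak{m}^{k}\,\oplus\, M/\mathfrak{m}^{k-1}M. \]
Passing to the inverse limit, the $\mathfrak{n}$-adic completion of $R[M]$ becomes $\widehat{R}\oplus \widehat{M}$, where $\widehat{\;\cdot\;}$ denotes $\mathfrak{m}$-adic completion. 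Since $R$ is complete, $\widehat{R}=R$; and since $M$ is finitely generated over the complete Noetherian local ring $R$, it is automatically $\mathfrak{m}$-adically complete (a standard consequence of Artin--Rees), so $\widehat{M}=M$. Therefore $R[M]$ is $\mathfrak{n}$-adically complete.

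Finally, the canonical $R$-algebra map $R\to R[M]$, $r\mapsto (r,0)$, sends $\mathfrak{m}$ into $\mathfrak{n}$, hence is local. The only delicate step is the filtration formula for $\mathfrak{n}^{k}$ together with the invocation of completeness for finitely generated modules over complete Noetherian local rings; the remaining checks are entirely formal.
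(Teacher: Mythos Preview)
Your argument is correct and follows essentially the same route as the paper: identify the maximal ideal $\mathfrak{m}\oplus M$, prove by induction that $\mathfrak{n}^{k}=\mathfrak{m}^{k}\oplus\mathfrak{m}^{k-1}M$, split the inverse limit accordingly, and use completeness of $R$ and of the finite module $M$. The only differences are cosmetic---you add the Noetherianness remark and spell out the Artin--Rees justification for $\widehat{M}=M$, which the paper leaves implicit.
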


\begin{proof}
    Denote by $\mf n$ the maximal ideal of $R$.
    It is evident that $\mf{n}[M] \coloneqq \mf{n}\oplus M$ is the unique maximal ideal of $R[M]$. 
    Then $R[M]$ is a local ring, and $R\to R[M]$ is a local homomorphism.
    We claim that $\mf{n}[M]^k \cong \mf{n}^k\oplus \mf{n}^{k-1}M$ as $R[M]$-modules 
    for every $k\in \NN$.  Indeed, for an arbitrary $(a, b)\in \mf{n}[M]$,
    and for $(a', b')\in \mf{n}^k\oplus \mf{n}^{k-1}M$, we see that
    \[ (a, b)\cdot (a',b') = (aa', a.b'+a'.b) \in \mf{n}^{k+1}\oplus \mf{n}^kM. \]
    Hence $\mf{n}[M]^{k+1}\subseteq \mf{n}^{k+1}\oplus \mf{n}^kM$.
    Conversely, it is evident that $\mf{n}^{k+1}\oplus 0\subseteq \mf{n}[M]^{k+1}$ and that
    $0\oplus \mf{n}^kM = (\mf{n}^k\oplus 0)\cdot (0\oplus M) \subseteq \mf{n}[M]^{k+1}$,
    whence $\mf{n}^{k+1}\oplus \mf{n}^kM \subseteq \mf{n}[M]^{k+1}$.  
    Thus, the claim is verified by induction on $k$.  Then
    \begin{equation}
    \begin{aligned}\label{iso-R-modules}
        \varprojlim_{k\in \NN} \dfrac{R[M]}{\mf{n}[M]^k} 
    &= \varprojlim_{k\in \NN} \dfrac{R\oplus M}{\mf{n}^k\oplus \mf{n}^{k-1}M}
    \cong \varprojlim_{k\in \NN} (R/\mf{n}^k)\oplus (M/\mf{n}^{k-1}M)  \\
    &= \big(\varprojlim_{k\in \NN} R/\mf{n}^k\big)\oplus \big(\varprojlim_{k\in \NN} M/\mf{n}^{k-1}M \big)
    = R \oplus M = R[M],
    \end{aligned}
    \end{equation}
    which establishes an isomorphism of $R$-modules.  A routine calculation shows
    that the isomorphism in \eqref{iso-R-modules} above is compatible with the ring structures
    of dual numbers, hence we can conclude that $R[M]$ is a complete local ring.
\end{proof}


\subsection{Log-smoothness for toroidal morphisms}
In this subsection, we show that dominant toroidal morphisms on either small \'etale sites 
or Zariski sites are log-smooth.  
For \emph{logarithmic differentials} (or \emph{log-differentials} for short),
we refer readers to \cite[\S IV.1]{Ogus-log-geo} and \cite[\S 12.3]{gabber2018foundationsringtheory}.
We only consider log-differentials in the usual Zariski topology.

We first treat the log-smoothness for Zariski sites, which is Theorem~\ref{toroidal-to-sm-zar}.
The proof of Theorem~\ref{toroidal-to-sm-zar} is due to Martin Olsson.

\begin{lemma}\label{diff-criterion-log-sm-zar}
    Let $(U_X\subset X)$ and $(U_Y\subset Y)$ be strict toroidal embeddings, and let
    \[ f\colon (U_X\subset X)\to (U_Y\subset Y) \] 
    be a morphism of embeddings, 
    where $f\colon X\to Y$ is dominant.  Denote by 
    \[ \mcM \coloneqq \mc{M}_{U_X/X}^{\zar}\, \text{ and }\, \mcN \coloneqq \mc{M}_{U_Y/Y}^{\zar} \] 
    the Zariski compactifying log-structures associated to $(U_X\subset X)$ and $(U_Y\subset Y)$.  
    Then there is an exact sequence of sheaves of log-differentials
    \[ 0\to f^*\big(\Omega_Y^1(\log \mcN) \big)\to \Omega_X^1(\log \mc{M}) 
    \to \Omega_{X/Y}^1\big(\log (\mc{M}/\mcN)\big) \to 0. \]
\end{lemma}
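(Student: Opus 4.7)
The plan is to apply the first fundamental exact sequence for log-differentials---which is automatically right-exact for any log-morphism of fine log-schemes---and then promote it to a short exact sequence by verifying injectivity of the leftmost map at the generic point of $X$.

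The morphism of embeddings $f\colon (U_X\subset X)\to (U_Y\subset Y)$ induces a log-morphism of log-varieties $f\colon (X,\mcM)\to (Y,\mcN)$ by \cite[III.1.6.2]{Ogus-log-geo}. For any log-morphism of fine log-schemes there is a canonical right-exact sequence
\[ f^*\Omega_Y^1(\log \mcN)\to \Omega_X^1(\log \mcM)\to \Omega_{X/Y}^1\big(\log (\mcM/\mcN)\big)\to 0, \]
coming from the universal property of log-differentials; see \cite[Proposition (3.12)]{Kato89} or the corresponding discussion in \cite[\S IV.2]{Ogus-log-geo}. Applying this to our $f$ yields everything in the statement except the injectivity on the left.

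For injectivity, I would first observe that by Theorem~\ref{toroidal-to-log-reg}, the log-varieties $(X,\mcM)$ and $(Y,\mcN)$ are log-regular in $\logzar$, and by Lemma~\ref{log-regular-equi-smooth} they are log-smooth over $\spec\mbb K$. Hence $\Omega_X^1(\log \mcM)$ and $\Omega_Y^1(\log \mcN)$ are locally free of ranks $\dim X$ and $\dim Y$ respectively; in particular $f^*\Omega_Y^1(\log \mcN)$ is locally free, hence torsion-free, on $X$, so it suffices to check injectivity at the generic point $\eta_X$ of $X$.

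Finally, since $U_X$ is open and nonempty in the irreducible variety $X$, we have $\eta_X\in U_X$; since $f$ is dominant and $f(U_X)\subseteq U_Y$, the image $f(\eta_X)$ is the generic point $\eta_Y$ of $Y$, which lies in $U_Y$. Because $\mcM,\mcN$ are trivial on $U_X,U_Y$, the localised map at $\eta_X$ reduces to the ordinary K\"ahler-differential map
\[ \Omega^1_{k(Y)/\mbb K}\otimes_{k(Y)} k(X)\to \Omega^1_{k(X)/\mbb K}. \]
Since $\chara \mbb K=0$, the extension $k(X)/k(Y)$ is separable, so this map is injective by the first fundamental sequence for K\"ahler differentials (cf.\ \cite[Theorem 25.3]{CA-Mat}), and the desired global injectivity follows by torsion-freeness. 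The only subtle point in the argument is precisely this step: without dominance of $f$ (to reduce to a field extension) or without the characteristic-zero assumption (for separability), one cannot expect the sequence to be short exact.
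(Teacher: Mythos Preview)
Your proof is correct and follows essentially the same approach as the paper: both start from the right-exact first fundamental sequence for log-differentials, use log-regularity of $(Y,\mcN)$ (via Theorem~\ref{toroidal-to-log-reg} and Lemma~\ref{log-regular-equi-smooth}) to get that $f^*\Omega_Y^1(\log\mcN)$ is locally free and hence the kernel is torsion-free, and then verify vanishing of the kernel at the generic point by reducing to ordinary K\"ahler differentials in characteristic zero. The only cosmetic difference is that the paper first restricts to $U_X$ and invokes generic smoothness, whereas you localise directly at $\eta_X$ and invoke separability of $k(X)/k(Y)$; these are equivalent.
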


\begin{proof}
    By \cite[(12.3.12)]{gabber2018foundationsringtheory}, there is an exact sequence of $\mc{O}_X$-modules
    \[ f^*\big(\Omega_Y^1(\log \mcN) \big)\xrightarrow{\delta} \Omega_X^1(\log \mc{M}) 
    \to \Omega_{X/Y}^1\big(\log (\mc{M}/\mcN)\big) \to 0. \]
    Hence it suffices to show that $\delta$ injective.
    Denote by $K$ the kernel of $\delta$.
    By Lemma~\ref{log-regular-equi-smooth}, Theorem~\ref{toroidal-to-log-reg} 
    and \cite[12.3.26]{gabber2018foundationsringtheory}, 
    $f^*\big(\Omega_Y^1(\log \mcN) \big)$ is locally free of finite rank,
    so $K$ is a torsion free $\mc{O}_X$-module.  
    By \cite[(12.3.6)]{gabber2018foundationsringtheory}, we have
    \[ \Omega_X^1|_{U_X} \cong \big(\Omega_X^1(\log \mc{M})\big)|_{U_X}\,
    \text{ and }\, \Omega_Y^1|_{U_Y}\cong \big(\Omega_Y^1(\log \mcN)\big)|_{U_Y}. \]
    As $U_X$ is mapped into $U_Y$, the morphism $\delta$ on $U_X$ is isomorphic to 
    the morphism $\epsilon$ in the canonical exact sequence of usual K\"ahler differentials
    (see \cite[Corollaire (16.4.19)]{EGA-IV-4})
    \[ f^*(\Omega_Y^1|_{U_Y})\xrightarrow{\epsilon} \Omega_X^1|_{U_X}\to \Omega_{U_X/U_Y}^1\to 0. \]
    Denote by $K_{\epsilon}$ the kernel of $\epsilon$.
    Since $U_Y$ is nonsingular, $\Omega_Y^1|_{U_Y}$ is locally free, so 
    $f^*(\Omega_Y^1|_{U_Y})$ is locally free too.
    Then $K_{\epsilon}$ is a torsion free $\mcO_{U_X}$-module.
    As $\mbb K$ has characteristic zero, by generic smoothness and 
    \cite[Proposition (17.2.3)]{EGA-IV-4}, $K_{\epsilon}$
    is zero at the generic point of $X$.  (Here we have applied the assumption that $X\to Y$ is dominant.)
    Thus, by \cite[Chap. I, Proposition (7.4.6)]{EGA-I}, we can deduce that $K_{\epsilon} = 0$.
    Then $K$ vanishes on $U_X$.  However, recall that $K$ is torsion free, then we can conclude that $K=0$;
    in other words, $\delta$ is injective. 
\end{proof}


\begin{theorem}\label{toroidal-to-sm-zar}
    Let $f\colon (U_X\subset X)\to (U_Y\subset Y)$ be a \emph{dominant} toroidal morphism of 
    \emph{strict} toroidal embeddings.  Then the associated log-morphism of log-varieties in $\logzar$
    \[ f^{\zar}\colon (X, \mcM_{U_X/X}^{\zar}) \to (Y, \mcM_{U_Y/Y}^{\zar}) \]
    is log-smooth in $\logzar$.
\end{theorem}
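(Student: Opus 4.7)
The plan is to apply the log-cotangent criterion for log-smoothness (cf. \cite[IV.3.2.3]{Ogus-log-geo}), which, when both source and target are already log-smooth over a common base, characterises log-smoothness of the intermediate morphism by the injection of pulled-back log-cotangent into total log-cotangent being locally a direct summand. Both $(X,\mcM_{U_X/X}^{\zar})$ and $(Y,\mcM_{U_Y/Y}^{\zar})$ are log-regular in $\logzar$ by Theorem~\ref{toroidal-to-log-reg}, hence log-smooth over $\spec\mbb K$ by Lemma~\ref{log-regular-equi-smooth}. Consequently $\Omega^1_X(\log\mcM_{U_X/X}^{\zar})$ and $f^*\Omega^1_Y(\log\mcM_{U_Y/Y}^{\zar})$ are locally free coherent $\mcO_X$-modules, and Lemma~\ref{diff-criterion-log-sm-zar} provides the short exact sequence
\[ 0\to f^*\Omega^1_Y(\log\mcM_{U_Y/Y}^{\zar})\to \Omega^1_X(\log\mcM_{U_X/X}^{\zar})\to \Omega^1_{X/Y}\bigl(\log(\mcM_{U_X/X}^{\zar}/\mcM_{U_Y/Y}^{\zar})\bigr)\to 0. \]
It thus suffices to show that the cokernel is locally free of rank $\dim X-\dim Y$, for then the injection is a locally split monomorphism and the criterion applies.

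To verify local freeness of the cokernel at a closed point $x\in X$ with $y=f(x)$, I would pass to formal completions. Fix local toric models $\{(W,C),w\}$ and $\{(V,B),v\}$ of $\{(X,D),x\}$ and $\{(Y,E),y\}$ together with a toric morphism $g\colon W\to V$ realising the formal-local definition of toroidal morphism at $x$. Since $f$ is dominant, the induced map $\wh{\mcO}_{Y,y}\to\wh{\mcO}_{X,x}$ is injective; via the local toric model this forces $\wh{\mcO}_{V,v}\to\wh{\mcO}_{W,w}$ to be injective, so $g$ itself is a dominant toric morphism. By Kato's chart criterion, a dominant toric morphism of normal affine toric varieties is log-smooth, whence $\Omega^1_{W/V}(\log)$ is locally free of rank $\dim W-\dim V=\dim X-\dim Y$. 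The toric-model isomorphism $\wh{\mcO}_{X,x}\cong\wh{\mcO}_{W,w}$ identifies the boundary ideals by construction, and the compactifying log-structures on $X$ and on $W$ (both log-regular by Theorem~\ref{toroidal-to-log-reg}) restrict along the canonical maps from the completions to strict fs log-structures via Lemma~\ref{formal-log-regular-schemes}, so the toric-model isomorphism upgrades to a strict log-isomorphism of completed fs log-schemes; the analogous statement holds for $Y$ and $V$. Invariance of the logarithmic cotangent sheaf under strict base change then identifies the completion of $\Omega^1_{X/Y}(\log(\cdot/\cdot))$ at $x$ with the completion of $\Omega^1_{W/V}(\log)$ at $w$, which is a free $\wh{\mcO}_{X,x}$-module of rank $\dim X-\dim Y$. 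Since $\mcO_{X,x}\to\wh{\mcO}_{X,x}$ is faithfully flat, local freeness of the coherent cokernel at $x$ follows, and hence everywhere.

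The main obstacle is the compatibility step above, namely verifying that the formal-local toric isomorphism upgrades to a strict isomorphism of fs Zariski log-schemes on the completions and that formation of the relative logarithmic cotangent sheaf commutes with this strict base change. This hinges on the fact that the local toric model identifies the boundary ideals (so compactifying log-structures on the two sides are identified after completion) together with the standard strict-base-change formula for log-cotangent. Once that compatibility is in hand, the outer two terms of the exact sequence of Lemma~\ref{diff-criterion-log-sm-zar} are locally free, the sequence splits Zariski-locally, and the log-cotangent criterion \cite[IV.3.2.3]{Ogus-log-geo} applied over the common base $\spec\mbb K$ yields the desired log-smoothness of $f^{\zar}$ in $\logzar$.
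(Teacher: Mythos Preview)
Your overall strategy coincides with the paper's: both reduce, via Lemma~\ref{diff-criterion-log-sm-zar} and the criterion \cite[IV.3.2.3]{Ogus-log-geo}, to showing that the relative log-cotangent sheaf $\Omega \coloneqq \Omega^1_{X/Y}\big(\log(\mcM_{U_X/X}^{\zar}/\mcM_{U_Y/Y}^{\zar})\big)$ is locally free, and both then pass to the completion $\wh\Omega$ at a closed point $x$ using the formal toric model. The divergence is in how local freeness of $\wh\Omega$ is established.

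Your proposed route---identify $\wh\Omega$ with the completion of $\Omega^1_{W/V}(\log)$ at $w$ via the strict log-isomorphism $\phi$---has a genuine gap at precisely the step you flag as the ``main obstacle'', and your suggested resolution does not close it. Even granting that $\phi,\psi$ upgrade to strict isomorphisms of fs log-schemes on the completions (this part is fine), what functoriality of log-cotangent under a strict isomorphism yields is $\Omega^1_{\wh X/\wh Y}(\log)\cong\phi^*\Omega^1_{\wh W/\wh V}(\log)$. Neither side of this is the object you want: the completion $\wh\Omega=\Omega_x\otimes_{\mcO_{X,x}}\wh\mcO_{X,x}$ is \emph{not} $\Omega^1_{\wh X/\wh Y}(\log)$, because $\wh X$ is not the fibre product $X\times_Y\wh Y$ and because $\Omega^1_{\wh\mcO_{X,x}/\mbb K}$ is not finitely generated (complete local $\mbb K$-algebras have many exotic derivations). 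The phrase ``strict base change'' does not apply to the passage $\rho_x\colon\wh X\to X$ over $\rho_y\colon\wh Y\to Y$, since this square is not Cartesian. So the bridge from ``log-cotangent of the completed morphism'' back to ``completion of the log-cotangent'' is missing, and it is exactly this bridge that carries the content.

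The paper circumvents this by never comparing the cotangent modules directly. Instead it compares the functors they represent: for any $\wh\mcO_{X,x}$-module $E$, one has a natural bijection between $\homo_{\wh\mcO_{X,x}}(\wh\Omega,E)$ and the set of log-morphisms $(\wh X[E],\mcM_{\wh X[E]})\to(X,\mcM_X)$ over $(Y,\mcM_Y)$ restricting to $\rho_x$ on the reduction (dual-number extensions). Projectivity of $\wh\Omega$ then becomes a log-thickening lifting problem in the sense of \cite[IV.2.1.1]{Ogus-log-geo}. The decisive observation is that $\wh\mcO_{X,x}[E]$ is itself a complete local ring (Lemma~\ref{dual-num-complete}), so any such log-morphism to $(X,\mcM_X)$ factors through $(\wh X,\mcM_{\wh X})$ and can therefore be transported via $\phi,\psi$ to a log-morphism to $(W,\mcM_W)$ over $(V,\mcM_V)$. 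Log-smoothness of the dominant toric morphism $g$ (checked directly from the chart criterion) supplies the required lift, which is transported back by the same mechanism. This argument never invokes $\Omega^1_{\wh X/\wh Y}(\log)$ at all; it uses only that log-morphisms out of complete local log-rings factor through formal neighbourhoods.
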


\begin{proof}
    \emph{Step 1}.
    In this step, we set up necessary notation and 
    show that dominant toric morphisms are log-smooth (in Zariski topology).
    Let $D_X \coloneqq X\setminus U_X$ and $D_Y \coloneqq Y\setminus U_Y$.
    Let $x\in X$ be a closed point and $y \coloneqq f(x)$.  As $f$ is toroidal, 
    there exist a commutative diagram
    \begin{equation}
        \begin{aligned}\label{toric-models-diagram}
            \xymatrix{
            X\ar[d]^f & \spec \whtOO_{X,x}\ar[l]_-{\rho_x}\ar[r]^-{\phi}\ar[d]^{\wh{f}} & \spec \whtOO_{W,w}\ar[r]^-{\rho_w}\ar[d]^{\wh{g}} & W\ar[d]^g \\
            Y & \spec \whtOO_{Y,y}\ar[r]^-{\psi}\ar[l]_-{\rho_y} & \spec \whtOO_{V,v}\ar[r]^-{\rho_v} & V
            }\end{aligned}
    \end{equation}
    where 
    \begin{itemize}
        \item $\Set{(W, C), w}$, $\Set{(V, B), v}$ are local toric models of $\Set{(X, D_X), x}$, $\Set{(Y, D_Y),y}$,
        \item $g\colon W\to V$ is a toric morphism of normal affine toric varieties,
        \item $\phi, \psi$ are isomorphisms of schemes induced by local toric models,
        \item $\rho_x, \rho_y, \rho_w, \rho_v$ are canonical morphisms, and
        \item $\wh{f}, \wh{g}$ are the natural morphisms induced by $f,g$ respectively.
    \end{itemize}
    Write
    \[ \wh{X} \coloneqq \spec \whtOO_{X,x}, \,\wh{Y} \coloneqq \spec \whtOO_{Y,y},
    \, \wh{W} \coloneqq \spec \whtOO_{W,w} \,\text{ and }\,
    \wh{V} \coloneqq \spec \whtOO_{V,v}. \]
    As $X, Y$ are normal varieties, by Lemma~\ref{local-irreducibility},
    $\wh{X}, \wh{Y}$ are normal irreducible schemes.
    By assumption, $f$ is dominant, 
    then $\wh{f}$ maps the generic point of $\wh{X}$ to the generic point of $\wh{Y}$.
    As $\phi, \psi$ are isomorphisms, $\wh{g}$ also maps the generic point of $\wh{W}$
    to the generic point of $\wh{V}$.  Then the generic point of $W$ is mapped to 
    the generic point of $V$ by $g$.  Thus, we see that $g$ is also dominant.

    Let $\mbb T_W, \mbb T_V$ be the tori of $W, V$ respectively.  Let
    \[ \mcM_W \coloneqq \mcM_{\mbb T_W/W}^{\zar}\,\text{ and }\, 
    \mcM_V \coloneqq \mcM_{\mbb T_V/V}^{\zar}. \]
    We claim that the natural log-morphism 
    \[ g^{\zar}\colon (W, \mcM_W) \to (V, \mcM_V) \]
    is log-smooth in $\logzar$.  To this end,
    we can assume that $W$ is given by $(N_W, \sigma)$ 
    and that $V$ is given by $(N_V, \tau)$; see \S\ref{toric-couples-defn}. 
    Then by \cite[Theorem 3.3.4]{CLS:toric}, $g$ is induced by a map of lattices $\ol{g}\colon N_W\to N_V$
    such that $\ol{g}_{\RR}(\sigma)\subseteq \tau$, where $\ol{g}_{\RR}\colon (N_W)_{\RR}\to (N_V)_{\RR}$
    is the $\RR$-linear map induced by $\ol{g}$.  Taking the dual lattices, we get the dual map of lattices
    $\ol{g}^{\vee}\colon M_V\to M_W$.  
    Then $g^{\zar}$ is given by the homomorphism of fs monoids induced by $\ol{g}^{\vee}$,
    \[ \phi_g\colon \tau^{\vee}\cap M_V\to \sigma^{\vee}\cap M_W. \]
    Note that by the proof of \cite[Theorem 1.2.18]{CLS:toric}, we have
    \[ (\sigma^{\vee}\cap M_W)^{\gp} = M_W\,\text{ and }
    \, (\tau^{\vee}\cap M_V)^{\gp} = M_V. \]
    Thus, $\phi_g^{\gp}$ is equal to the dual map of lattices $\ol{g}^{\vee}$.
    As $g$ is dominant, $\mbb T_W\to \mbb T_V$ is also dominant.  
    Hence $\phi_g^{\gp}$ is an injective homomorphism of Abelian groups.
    Recall that $\chara \mbb K = 0$, then by \cite[12.3.34]{gabber2018foundationsringtheory},
    we can conclude that $g^{\zar}$ is log-smooth in $\logzar$.

    \medskip
    
    \emph{Step 2}.
    In this step, we reduce the verification of log-smoothness to 
    a lifting property of homomorphisms between modules.
    For simplicity of notations, let 
    \[ \mcM_X \coloneqq \mcM_{U_X/X}^{\zar}\,\text{ and }\,\mcM_Y \coloneqq \mcM_{U_Y/Y}^{\zar}. \]
    As we are working on Zariski sites, in the rest of the proof,
    we will denote a log-morphism by the same symbol of the morphism
    between the underlying schemes.  
    Hence we will also denote by $f$ the log-morphism $(X, \mcM_X)\to (Y, \mcM_Y)$.
    This convention should not lead to confusion from the contexts.

    By \cite[12.3.20]{gabber2018foundationsringtheory}, the sheaf of log-differentials
    $\Omega_{X/Y}^1\big(\log (\mc{M}_X/\mcM_Y)\big)$ of $f$ is a coherent $\mcO_X$-module;
    cf. the notations in \cite[\S 12.3]{gabber2018foundationsringtheory}.
    By Lemma~\ref{diff-criterion-log-sm-zar} and \cite[12.3.32]{gabber2018foundationsringtheory}
    (or \cite[IV.3.2.3]{Ogus-log-geo}), it suffices to show that 
    $\Omega_{X/Y}^1\big(\log (\mc{M}_X/\mcM_Y)\big)$ is locally free.
    Then it suffices to show that for every closed point $x\in X$, 
    the stalk of $\Omega_{X/Y}^1\big(\log (\mc{M}_X/\mcM_Y)\big)$ at $x$ is a free $\mcO_{X,x}$-module.
    To this end, denote by 
    \[ \Omega \coloneqq \Omega_{X/Y}^1\big(\log (\mc{M}_X/\mcM_Y)\big), \]
    and by $\wh{\Omega}$ the pullback of $\Omega$ to $\wh{X}$.
    By \cite[Chapitre III, \S 3.5, Corollaire 2, page 248]{Bourbaki:ca:1-4}, 
    it suffices to show that $\wh{\Omega}$ is a free $\whtOO_{X,x}$-module.  
    Then by \cite[Theorem 2.5]{CA-Mat}, it suffices to show that $\wh{\Omega}$
    is a projective $\whtOO_{X,x}$-module.
    To this end, it suffices to verify that 
    for some surjective homomorphism of $\whtOO_{X,x}$-modules $F\to \wh{\Omega}$ with
    $F$ finitely generated and free, the induced map
    \begin{equation}
        \homo_{\whtOO_{X,x}}(\wh{\Omega}, F)\to \homo_{\whtOO_{X,x}}(\wh{\Omega}, \wh{\Omega}) 
        \label{bijection-homo-map}
    \end{equation}
    is surjective; see \cite[Proposition A3.1, page 615]{Eisenbund-CA-book}.

    \medskip

    \emph{Step 3}.
    Let $E$ be an $\whtOO_{X,x}$-module, and let $\whtOO_{X,x}[E]$ be the ring of dual numbers on $E$.  Let
    \[ \wh{X}[E] \coloneqq \spec \big(\whtOO_{X,x}[E]\big). \]
    Let
    \[ r\colon \wh{X}[E]\to \wh{X} \] 
    be the \emph{retraction} induced by 
    the structure homomorphism $\whtOO_{X,x}\to \whtOO_{X,x}[E]$, which 
    sends every $a\in \whtOO_{X,x}$ to $(a, 0)$.  Let
    \[ \mcM_{\wh{X}} \coloneqq (\rho_x)^*_{\log} \mcM_X \,\text{ and }\, 
    \mcM_{\wh{X}[E]} \coloneqq r^*_{\log} (\mcM_{\wh{X}}). \]
    We view $(\wh{X}[E], \mcM_{\wh{X}[E]})$ as a log-scheme over $(Y, \mcM_Y)$ via the composite log-morphism
    \begin{equation}
        (\wh{X}[E], \mcM_{\wh{X}[E]})\xrightarrow{r} (\wh{X}, \mcM_{\wh{X}}) \xrightarrow{\rho_x}
    (X, \mcM_X)\xrightarrow{f} (Y, \mcM_Y). \label{structure-log-map}
    \end{equation}
    Denote by $\logsch_{\zar}/(Y, \mcM_Y)$ the category of log-schemes over $(Y, \mcM_Y)$, and by
    \[ \Modd\logsch_{\zar}/(Y, \mcM_Y) \]
    the category whose objects are all the pairs $\Set{(Q, \mcM_Q), \mcF}$,
    where $(Q, \mcM_Q)$ is an object in $\logsch_{\zar}/(Y,\mcM_Y)$, 
    and $\mcF$ is an $\mcO_Q$-module.  The morphisms
    \[ \Set{(Q, \mcM_Q), \mcF} \to \Set{(Z, \mcM_Z), \mcG} \]
    in $\Modd\logsch_{\zar}/(Y, \mcM_Y)$ are the pairs $(\theta, \varphi)$ consisting of a log-morphism
    \[ \theta\colon (Q, \mcM_Q)\to (Z, \mcM_Z) \] 
    in $\logsch_{\zar}/(Y,\mcM_Y)$, and a morphism 
    \[ \varphi\colon \theta^*\mcG\to \mcF \] 
    of $\mcO_Q$-modules; cf. \cite[\S 12.3.5]{gabber2018foundationsringtheory}.
    By \cite[\S 12.3.14]{gabber2018foundationsringtheory}, there is a natural bijection 
    \[ \mathfrak{g}_E\colon \mbb H_{\modd}(E) \to \mbb H_{\Log}[E], \]
    where
    \begin{equation}
        \mbb H_{\modd}(E) \coloneqq \homo_{\Modd\logsch_{\zar}/(Y,\mcM_Y)}\bigg( \Set{(\wh{X}, \mcM_{\wh{X}}), E}, 
            \Set{(X, \mcM_X), \Omega}\bigg) \label{big-bijection-domain}
    \end{equation}
    and
    \begin{equation}
        \mbb H_{\Log}[E] \coloneqq \homo_{\logsch_{\zar}/(Y, \mcM_Y)} \bigg( (\wh{X}[E], \mcM_{\wh{X}[E]}), (X, \mcM_X) \bigg).
        \label{big-bijection-target}
    \end{equation}
    For evert $(\theta, \varphi)\in \mbb H_{\modd}(E)$,
    we denote its image in $\mbb H_{\Log}[E]$ by $\mathfrak{g}_{E,(\theta, \varphi)}$; 
    cf. \cite[\S 12.3.14]{gabber2018foundationsringtheory}.  Let
    \[ \mbb H_{\modd}(E)_{\rho_x} \coloneqq \Set{ (\theta, \varphi)\in \mbb H_{\modd}(E)\mid \theta = \rho_x }. \]
    It is evident that there is a natural identification
    \begin{equation}
        \mbb H_{\modd}(E)_{\rho_x} = \homo_{\whtOO_{X,x}}(\wh{\Omega}, E). \label{bijection-modules}
    \end{equation}
    The augmentation $\whtOO_{X,x}[E]\to \whtOO_{X,x}$ induces a log-morphism
    \[ \gamma\colon (\wh{X}, \mcM_{\wh{X}}) \to (\wh{X}[E], \mcM_{\wh{X}[E]}), \]
    which we call the \emph{reduction of $(\wh{X}[E], \mcM_{\wh{X}[E]})$}.
    By the construction in \cite[\S 12.3.14]{gabber2018foundationsringtheory}, 
    for every $(\theta, \varphi)\in \mbb H_{\modd}(E)_{\rho_x}$, 
    the image $\mathfrak{g}_{E,(\theta, \varphi)}$ satisfies that
    \[ \mathfrak{g}_{E, (\theta, \varphi)}\circ \gamma = \rho_x. \]
    Let
    \[ \mbb H_{\Log}[E]_{\rho_x} \coloneqq \Set{\alpha\in \mbb H_{\Log}[E] 
    \mid \alpha\circ \gamma = \rho_x }. \]
    Then $\mathfrak{g}_E$ restricts to a bijection 
    \[ \mathfrak{g}_E|_{\rho_x}\colon \mbb H_{\modd}(E)_{\rho_x} \to \mbb H_{\Log}[E]_{\rho_x}. \]
    By \eqref{bijection-modules}, we get a natural bijection
    \begin{equation}
        \homo_{\whtOO_{X,x}}(\wh{\Omega}, E) 
        \to \mbb H_{\Log}[E]_{\rho_x}, \label{bijection-transfer-mods}
    \end{equation}
    which holds for any $\whtOO_{X,x}$-module $E$.

    \medskip

    \emph{Step 4}.
    Recall that $F$ is a finite free $\whtOO_{X,x}$-module admitting 
    a surjection $F\to \wh{\Omega}$.  Then by \eqref{bijection-transfer-mods}, 
    for surjectivity of \eqref{bijection-homo-map}, it suffices to show the natural map
    \begin{equation}
        \mbb H_{\Log}[F]_{\rho_x} \to \mbb H_{\Log}[\wh{\Omega}]_{\rho_x}
        \label{surj-to-pf}
    \end{equation}
    is surjective.  It is easy to see that for every $h\in \mbb H_{\Log}[\wh{\Omega}]_{\rho_x}$, 
    inverse images of $h$ under \eqref{surj-to-pf} are identified with 
    dotted arrows filling in the commutative diagram
    \begin{equation}
        \begin{aligned}\label{lifting-diagram}
            \xymatrix{
            (\wh{X}[\wh{\Omega}], \mcM_{\wh{X}[\wh{\Omega}]})\ar[r]^-{\wh{h}}\ar[d]^i\ar@/^2pc/[rr]^-h & (\wh{X}, \mcM_{\wh{X}})\ar[r]^-{\rho_x}\ar[d] & (X, \mcM_X)\ar[d]^f \\
            (\wh{X}[F], \mcM_{\wh{X}[F]})\ar[r]\ar@{-->}[rru] & (\wh{Y}, \mcM_{\wh{Y}})\ar[r]^-{\rho_y} & (Y, \mcM_Y)
            }\end{aligned}
    \end{equation}
    where the composition of bottom horizontal arrows is the structure map of $(\wh{X}[F], \mcM_{\wh{X}[F]})$
    as a log-scheme over $(Y, \mcM_Y)$; see \eqref{structure-log-map}.
    
    We claim that the natural log-morphism
    \[ i\colon (\wh{X}[\wh{\Omega}], \mcM_{\wh{X}[\wh{\Omega}]})
    \to (\wh{X}[F], \mcM_{\wh{X}[F]}) \]
    is a log-thickening of order one; cf. \cite[IV.2.1.1]{Ogus-log-geo}.
    Indeed, let $K$ be the kernel of $F\to \wh{\Omega}$,
    then $i\colon \wh{X}[\wh{\Omega}] \to \wh{X}[F]$ is the closed immersion defined by
    the ideal $I_K \coloneqq 0\oplus K$ of $\whtOO_{X,x}[F]$, which clearly satisfies that $I_K^2=0$.
    It is evident that $i^*_{\log} \mcM_{\wh{X}[F]} = \mcM_{\wh{X}[\wh{\Omega}]}$
    as $\mcM_{\wh{X}[F]}, \mcM_{\wh{X}[\wh{\Omega}]}$ are inverse images of $\mcM_{\wh{X}}$ via retractions.
    Hence $i$ is a strict closed immersion of log-schemes.
    By Lemma~\ref{formal-log-regular-schemes}, $\mcM_{\wh{X}}$ admits an fs chart.
    As $\mcM_{\wh{X}[F]}$ is the inverse image of $\mcM_{\wh{X}}$,
    we see that $\mcM_{\wh{X}[F]}$ also admits an fs chart.
    Then by \cite[II.2.3.6]{Ogus-log-geo}, the sheaf of monoids $\mcM_{\wh{X}[F]}$ is fs, hence integral.
    Thus, the subgroup $(1, 0) + I_K$ of $\mcO_{\wh{X}[F]}^*\cong \mcM_{\wh{X}[F]}^*$ 
    operates freely on $\mcM_{\wh{X}[F]}$.  By \cite[IV.2.1.1]{Ogus-log-geo},
    $i$ is a log-thickening of order one.
    
    \medskip

    \emph{Step 5}.
    It is evident that \eqref{toric-models-diagram} induces 
    a commutative diagram of log-schemes
    \begin{equation}
        \begin{aligned}\label{log-toric-models}
            \xymatrix{
            (X, \mcM_X)\ar[d]^f & (\wh{X}, \mcM_{\wh{X}})\ar[l]_-{\rho_x}\ar[r]^-{\phi}\ar[d]^{\wh{f}} & (\wh{W}, \mcM_{\wh{W}})\ar[r]^-{\rho_w}\ar[d]^{\wh{g}} & (W, \mcM_W)\ar[d]^g \\
            (Y, \mcM_Y) & (\wh{Y}, \mcM_{\wh{Y}})\ar[r]^-{\psi}\ar[l]_-{\rho_y} & (\wh{V}, \mcM_{\wh{V}})\ar[r]^-{\rho_v} & (V, \mcM_V)
            }
        \end{aligned}
    \end{equation}
    where $\phi, \psi$ are isomorphisms of log-schemes.  Then the left square of \eqref{lifting-diagram}
    fits into the commutative diagram of log-schemes
    \begin{equation}
        \begin{aligned}\label{lifting-diagram-toric-side}
            \xymatrix{
            (\wh{X}[\wh{\Omega}], \mcM_{\wh{X}[\wh{\Omega}]})\ar[r]^-{\wt{h}}\ar[d]^i & (\wh{W}, \mcM_{\wh{W}})\ar[r]^-{\rho_w}\ar[d]^{\wh{g}} & (W, \mcM_W)\ar[d]^g \\
            (\wh{X}[F], \mcM_{\wh{X}[F]})\ar[r] & (\wh{V}, \mcM_{\wh{V}})\ar[r]^-{\rho_v} & (V, \mcM_V)
            }\end{aligned}
    \end{equation}
    where $\wt{h} \coloneqq \wh{h}\circ \phi$; see \eqref{lifting-diagram} for $\wh{h}$.

    By Step 1, $g$ is log-smooth in $\logzar$.  Then by \cite[IV.3.1.1]{Ogus-log-geo} 
    (see also \cite[12.3.22]{gabber2018foundationsringtheory}), there is a log-morphism
    \[ u \colon (\wh{X}[F], \mcM_{\wh{X}[F]}) \to (W, \mcM_W), \]
    which is a deformation of $\rho_w\circ\wt{h}$ to $(\wh{X}[F], \mcM_{\wh{X}[F]})$;
    see \cite[IV.2.2.1]{Ogus-log-geo}.  By Lemma~\ref{dual-num-complete},
    $\whtOO_{X,x}[F]$ is a complete local ring, hence $u$ factors through $(\wh{W}, \mcM_{\wh{W}})$.
    In other words, there is a log-morphism 
    \[ \wt{u} \colon (\wh{X}[F], \mcM_{\wh{X}[F]}) \to (\wh{W}, \mcM_{\wh{W}}), \]
    which fits into \eqref{lifting-diagram-toric-side} and makes the diagram commutative.
    Recall that $\phi, \psi$ in \eqref{log-toric-models} are isomorphisms of log-schemes.
    Then $\wt{u}$ corresponds to a log-morphism
    \[ \wh{u}\colon (\wh{X}[F], \mcM_{\wh{X}[F]}) \to (\wh{X}, \mcM_{\wh{X}}), \]
    which is a deformation of $\wh{h}$ in \eqref{lifting-diagram} to $(\wh{X}[F], \mcM_{\wh{X}[F]})$.
    Then $\rho_x\circ \wh{u}$ gives a dotted arrow in \eqref{lifting-diagram} making the diagram commutative,
    which shows that \eqref{surj-to-pf} is surjective.
    Therefore, by \eqref{bijection-transfer-mods}, we can conclude that \eqref{bijection-homo-map}
    is surjective, hence $\Omega$ is locally free.
\end{proof}


\begin{remark}
    Keep the notations in Theorem~\ref{toroidal-to-sm-zar}.
    If $f$ is not dominant, then $f^{\zar}$ can not be log-smooth.
    To this end, assume that $f$ is not dominant, but $f^{\zar}$ is log-smooth.
    As $f$ is a morphism of embeddings, it induces a morphism of nonsingular varieties 
    $f\colon U_X\to U_Y$.  As $f^{\zar}$ is log-smooth,
    the restriction of $f^{\zar}$ to $U_X$, $(U_X, \mcM_{U_X})\to (U_Y, \mcM_{U_Y})$,
    is also log-smooth, where $\mcM_{U_X}, \mcM_{U_Y}$ are the trivial log-structures on $U_X, U_Y$.
    Then by \cite[12.3.27]{gabber2018foundationsringtheory}
    (see also \cite[IV.3.1.6]{Ogus-log-geo}), $f\colon U_X\to U_Y$ is smooth in the usual sense,
    contradicting the assumption that $f$ is not dominant.
\end{remark}


Now we show that dominant toroidal morphisms are log-smooth in \'etale topology.

\begin{theorem}\label{toroidal-to-sm-et}
    Let $f\colon (U_X\subset X)\to (U_Y\subset Y)$ be a \emph{dominant} 
    toroidal morphism of toroidal embeddings.
    Then the associated log-morphism of log-varieties in $\loget$
    \[ f^{\et}\colon (X, \mcM_{U_X/X})\to (Y, \mcM_{U_Y/Y}) \]
    is log-smooth in $\loget$.
\end{theorem}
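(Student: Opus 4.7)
The plan is to reduce to Theorem~\ref{toroidal-to-sm-zar} by passing to étale covers on which both toroidal embeddings become strict, and then to promote log-smoothness from $\logzar$ to $\loget$ via Lemma~\ref{log-sm-et-zar}. The key observation is that log-smoothness in $\loget$ is étale-local on both the source and the target (via strict étale base change and descent), so it suffices to verify log-smoothness after pulling back along suitable étale maps.

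Fix an arbitrary closed point $x\in X$ with $y \coloneqq f(x)$. First I would apply Lemma~\ref{pass-to-strict} at $y$ to obtain an étale morphism $\pi_Y\colon Y'\to Y$ onto an open neighbourhood of $y$ such that $(\pi_Y^{-1}(U_Y)\subset Y')$ is a strict toroidal embedding; after passing to a connected component one may assume $Y'$ is irreducible, using that $Y$ is normal (a consequence of toroidality, see \S\ref{rel-toroidal-couple}) and étale covers preserve normality. Next, form $X'' \coloneqq X\times_Y Y'$, choose a preimage $x''\in X''$ of $x$, and apply Lemma~\ref{pass-to-strict} once more to obtain an étale morphism $\pi\colon X'\to X''$ onto an open neighbourhood of $x''$ so that $(U_{X'}\subset X')$ is a strict toroidal embedding, where $U_{X'}$ is the preimage of $U_X$. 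Because $X$ is normal and $X''\to X$ is étale, $X''$ is geometrically unibranch at $x''$; after shrinking, $X'$ becomes irreducible, and the induced morphism $f'\colon X'\to Y'$ is dominant, since $Y'\to Y$ splits étale-locally and so dominance of $f'$ is reduced to that of $f$.

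The resulting $f'\colon (U_{X'}\subset X')\to (U_{Y'}\subset Y')$ will remain a toroidal morphism: any local toric model for $f$ at $x$ is simultaneously a local toric model for $f'$, since étale maps induce isomorphisms on completions of local rings. Thus Theorem~\ref{toroidal-to-sm-zar} applies and gives log-smoothness in $\logzar$ of $f'^{\zar}\colon (X', \mcM_{U_{X'}/X'}^{\zar})\to (Y', \mcM_{U_{Y'}/Y'}^{\zar})$. By Theorem~\ref{toroidal-to-log-reg}, the compactifying log-structures $\mcM_{U_{X'}/X'}$ and $\mcM_{U_{Y'}/Y'}$ on the small étale sites are Zariski, so Lemma~\ref{log-sm-et-zar} upgrades this to log-smoothness of $f'^{\et}$ in $\loget$.

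To conclude, Lemma~\ref{etale-pullback-chart} implies that the étale maps $\pi_X \coloneqq \pi_{X''}\circ \pi$ and $\pi_Y$ are strict with respect to the compactifying log-structures, so $f'^{\et}$ is the strict étale base change of $f^{\et}$ along these covers. Standard étale descent of log-smoothness then yields log-smoothness of $f^{\et}$ at $x$, and since $x$ was arbitrary, $f^{\et}$ is log-smooth in $\loget$. The main obstacle in carrying out this plan is the careful bookkeeping of the two successive reductions to the strict case, in particular verifying that dominance and toroidality of $f'$ survive the fibre product construction; once these are arranged, the remaining steps are direct applications of the previously established lemmas.
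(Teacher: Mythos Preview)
Your proposal is correct and follows essentially the same approach as the paper's proof: reduce first the target and then the source to strict toroidal embeddings via Lemma~\ref{pass-to-strict}, apply Theorem~\ref{toroidal-to-sm-zar} to the resulting dominant toroidal morphism between strict toroidal embeddings, and then upgrade to $\loget$ via Lemma~\ref{log-sm-et-zar} together with \'etale-locality of log-smoothness. The only cosmetic differences are that the paper cites \cite[Theorem (3.5)]{Kato89} explicitly for the \'etale-local nature of log-smoothness and invokes Lemma~\ref{et-to-zar-charts} (via global charts) rather than Theorem~\ref{toroidal-to-log-reg} to see that the log-structures are Zariski.
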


\begin{proof}
    Pick an arbitrary closed point $x\in X$, and let $y \coloneqq f(x)$.
    By Lemma~\ref{pass-to-strict}, there is an \'etale neighbourhood $\pi\colon Y'\to Y$ of $y$
    such that $(U_{Y'}\subset Y')$ is a strict toroidal embedding, where $U_{Y'} \coloneqq \pi^{-1}(U_Y)$.
    Consider the following Cartesian diagram
    \[\xymatrix{
    X\ar[d]^{f} & X'\ar[d]^{f'}\ar[l]_{\mu} \\
    Y & Y'\ar[l]_{\pi}
    }\]
    where $X' \coloneqq X\times_Y Y'$.  Set $U_{X'} \coloneqq \mu^{-1}(U_X)$.
    By \cite[Theorem (3.5)]{Kato89}, log-smoothness in $\loget$ is an \'etale-local property.
    Since $X$ is normal (see \cite[Lemma 3.8]{birkar2023singularities}), 
    and since $\mu$ is \'etale, $X'$ has normal irreducible components.
    Then for log-smoothness of $f^{\et}$, we can assume that $X'$ is irreducible.
    
    As $\mu\colon X'\to X$ is \'etale, 
    it is evident that $(U_{X'}\subset X')$ is a toroidal embedding and that 
    \[ f'\colon (U_{X'}\subset X')\to (U_{Y'}\subset Y') \]
    is also a dominant toroidal morphism.  
    Let $x'\in X'$ be a closed point mapping to $x\in X$.
    Again by Lemma~\ref{pass-to-strict}, there is an \'etale neighbourhood $\lambda\colon X''\to X'$
    of $x'$ such that $(U_{X''}\subset X'')$ is a strict toroidal embedding, 
    where $U_{X''} \coloneqq \lambda^{-1}(U_{X'})$.  Then the induced morphism
    \[ f''\colon (U_{X''}\subset X'')\to (U_{Y'}\subset Y') \]
    is a dominant toroidal morphism of strict toroidal embeddings.
    Moreover, we can assume that the log-varieties $(X'', \mcM_{U_{X''}/X''})$ and $(Y', \mcM_{U_{Y'}/Y'})$
    admit global charts, hence the log-structures are Zariski; see Lemma~\ref{et-to-zar-charts}.
    In addition, by Lemma~\ref{log-sm-et-zar}, the log-morphism
    \[ (f'')^{\et}\colon (X'', \mcM_{U_{X''}/X''}) \to (Y', \mcM_{U_{Y'}/Y'}) \]
    is log-smooth in $\loget$ if and only if 
    \[ (f'')^{\zar}\colon (X'', \mcM_{U_{X''}/X''}^{\zar}) \to (Y', \mcM_{U_{Y'}/Y'}^{\zar}) \]
    is log-smooth in $\logzar$.  By Theorem~\ref{toroidal-to-sm-zar}, $(f'')^{\zar}$
    is indeed log-smooth in $\logzar$, then we see that $(f'')^{\et}$ is log-smooth
    in $\loget$.  Thus, $f^{\et}$ is log-smooth in $\loget$.
\end{proof}


\subsection{\'Etale-toric charts for log-smooth log-morphisms}
The existence of \'etale-local charts for log-smooth log-morphisms
is a classical result in logarithmic geometry; see \cite[Theorem (3.5)]{Kato89},
\cite[\S IV.3.3]{Ogus-log-geo} and \cite[12.3.37]{gabber2018foundationsringtheory}, etc.
For an approach that does not depend on arguments in logarithmic geometry,
see \cite[Proposition 3.3]{denef2013remarkstoroidalmorphisms}.

\begin{theorem}\label{etale-chart-log-sm-thm}
    Let $f\colon (X, \mcM)\to (Y, \mcN)$ be a log-smooth log-morphism in $\loget$,
    where $(X,\mcM)$ and $(Y, \mcN)$ are log-regular log-varieties in $\loget$.  
    Then for every closed point $x\in X$ and $y\coloneqq \ul{f}(x)$, there is a commutative diagram in $\loget$
    \begin{equation}
        \begin{aligned}\label{et-chart-log-sm-diagram}
            \xymatrix{
            (X, \mcM)\ar[d]^f & (X', \mcM')\ar[l]_-{\mu}\ar[r]^-a\ar[d]^{f'} & \mathsf{A}_Q\ar[d]^{\mathsf{A}_{\theta}} \\
            (Y, \mcN) & (Y', \mcN')\ar[l]_-{\pi}\ar[r]^-b & \mathsf{A}_P
            }
        \end{aligned}
    \end{equation}
    where
    \begin{itemize}
        \item the morphisms $\ul{\mu}, \ul{\pi}$ of underlying varieties are \'etale neighbourhoods of $x, y$,
        \item $\mcM', \mcN'$ are the inverse images of $\mcM, \mcN$ to $\Et(X'), \Et(Y')$,
        \item $a,b$ are charts for $(X', \mcM'), (Y',\mcN')$ subordinate to fs monoids $Q, P$,
        \item the log-morphisms $a,b$ are log-\'etale in $\loget$,
        \item $\theta\colon P\to Q$ is an injective homomorphism of monoids,
        \item $f'$ is the induced log-morphism of log-regular log-varieties in $\loget$,
        \item $(b, \theta, a)$ is a chart for the log-morphism $f'$ subordinate to $\theta\colon P\to Q$.
        \item $(X', (\mcM')^{\zar})$ and $(Y', (\mcN')^{\zar})$ are log-regular log-varieties in $\logzar$,
        \item the induced log-morphism on Zariski sites 
        \[ (f')^{\zar}\colon (X', (\mcM')^{\zar}) \to (Y', (\mcN')^{\zar}) \] 
        is log-smooth in $\logzar$,
        \item the schemes $\AAA_Q, \AAA_P$ are toric varieties, and
        \item the underlying morphism $\AAA_{\theta}\colon \AAA_Q\to \AAA_P$ of schemes for the log-morphism $\mathsf{A}_{\theta}$ is a dominant toric morphism of toric varieties.
    \end{itemize}
\end{theorem}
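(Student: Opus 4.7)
The plan is to invoke the classical chart-existence theorem for log-smooth morphisms (\cite[Theorem (3.5)]{Kato89}; see also \cite[IV.3.3.1]{Ogus-log-geo} and \cite[12.3.37]{gabber2018foundationsringtheory}) and then to read off the auxiliary conclusions from the lemmas gathered in \S\ref{log-geo-section}. The only substantial input is Kato's theorem, which I treat as a black box; everything else amounts to bookkeeping with the equivalences already established between Zariski and small \'etale log-structures, and between toroidal and log-regular data.

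First I would construct a good chart for the target. Combining the log-regularity of $(Y,\mcN)$ with Theorem~\ref{toroidal-regular} and Theorem~\ref{general-chart} (or equivalently, directly with \cite[(1.6)]{Kato-toric} and the argument of Steps 1--4 in the proof of Theorem~\ref{toroidal-regular}), one produces an \'etale neighbourhood $\pi\colon Y'\to Y$ of $y$ and a chart $b\colon (Y',\mcN')\to \mathsf{A}_P$ with $\mcN'=\pi^*_{\log}\mcN$, where $P$ is a sharp fs monoid with $P^{\gp}$ torsion free, and the underlying morphism of $b$ is \'etale after shrinking; in particular, $b$ is strict and log-\'etale in $\loget$. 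Next, I would apply Kato's chart-existence theorem to the log-smooth morphism obtained by pulling $f$ back along $\pi$, namely the induced $(X\times_Y Y',\,\pi_X^*\mcM)\to (Y',\mcN')$, using $b$ as chart on the target. Around a chosen lift $x'$ of $x$, the theorem supplies an \'etale neighbourhood $\mu\colon X'\to X\times_Y Y'$ (which is then an \'etale neighbourhood of $x$ in $X$), an fs monoid $Q$, an injective homomorphism $\theta\colon P\to Q$ with $\mathrm{coker}(\theta^{\gp})$ torsion free (the condition automatic in characteristic zero), and a chart $a\colon (X',\mcM')\to \mathsf{A}_Q$ making $(b,\theta,a)$ a chart of the induced morphism $f'\colon (X',\mcM')\to (Y',\mcN')$ such that the underlying map $X'\to Y'\times_{\AAA_P}\AAA_Q$ is \'etale on schemes. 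This is exactly what makes $a$ log-\'etale.

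Finally, the remaining assertions follow by routine bookkeeping. Torsion-freeness of $Q^{\gp}$ is forced by the short exact sequence $0\to P^{\gp}\to Q^{\gp}\to \mathrm{coker}(\theta^{\gp})\to 0$ with torsion-free ends, so by \cite[Theorem 1.3.5]{CLS:toric} the schemes $\AAA_P,\AAA_Q$ are normal affine toric varieties; $\AAA_\theta$ is a toric morphism by \cite[Theorem 3.3.4]{CLS:toric} and is dominant because $\theta^{\gp}$ is injective, whence $\mbb K[P^{\gp}]\hookrightarrow \mbb K[Q^{\gp}]$. The log-structures $\mcM',\mcN'$ admit global fs charts, so they are Zariski by Lemma~\ref{et-to-zar-charts}; log-regularity of $(X',(\mcM')^{\zar})$ and $(Y',(\mcN')^{\zar})$ in $\logzar$ then follows from Lemmas~\ref{log-regular-etale-local} and \ref{log-regular-two-tops} together with the log-regularity of $(X,\mcM)$ and $(Y,\mcN)$; and log-smoothness of $(f')^{\zar}$ in $\logzar$ is immediate from Lemma~\ref{log-sm-et-zar} applied to $f'$, which is log-smooth in $\loget$ by stability of log-smoothness under strict (\'etale) base change. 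The main obstacle is thus the invocation of Kato's theorem itself---its proof constructs $Q$ as a suitable extension of $P$ realising a regular system of parameters on the fibre---which is entirely standard and for which I defer to the cited references.
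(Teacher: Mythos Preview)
Your overall approach matches the paper's---build a good chart on the target, invoke Kato's chart theorem (\cite[Theorem (3.5)]{Kato89}, \cite[IV.3.3.1]{Ogus-log-geo}), then deduce the remaining assertions from the comparison lemmas in \S\ref{log-geo-section}---but there is a gap in the passage to $Q^{\gp}$ torsion free.

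The assertion that $\mathrm{coker}(\theta^{\gp})$ is torsion free, with the parenthetical ``the condition automatic in characteristic zero'', is not what Kato's theorem says. The theorem only guarantees that $\ker(\theta^{\gp})$ and the torsion part of $\mathrm{coker}(\theta^{\gp})$ have order invertible on $X$; in characteristic zero this is vacuous for \emph{every} finite abelian group and does not force the torsion to vanish. Already the Kummer cover $\AAA^1\to\AAA^1$, $t\mapsto t^2$, has $P=Q=\NN$, $\theta=$ multiplication by $2$, and $\mathrm{coker}(\theta^{\gp})=\ZZ/2\ZZ$. Your short-exact-sequence argument therefore does not establish that $Q^{\gp}$ is torsion free; and since the chart produced by \cite[IV.3.3.1]{Ogus-log-geo} is a priori only fine, the phrase ``an fs monoid $Q$'' also needs justification. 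Without these, $\AAA_Q$ need not be a normal toric variety and the toric description of $\AAA_\theta$ breaks down.

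The paper closes this gap with one extra step after invoking \cite[IV.3.3.1]{Ogus-log-geo}. Since $a$ is strict and log-\'etale, the underlying $\ul{a}\colon X'\to\AAA_Q$ is \'etale; as $X'$ is a normal variety, its image is an irreducible normal open subscheme of $\AAA_Q$. One then replaces $\AAA_Q$ by the irreducible component of its normalisation containing that image (cf.\ the proof of \cite[I.3.4.1(3)]{Ogus-log-geo}), which amounts to replacing $Q$ by an fs monoid with torsion-free groupification; the chart $(b,\theta,a)$ survives this replacement because $\ul{a}$ factors through it. After that modification your bookkeeping goes through verbatim.
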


\begin{proof}
    \emph{Step 1}.
    Let $\ul{\pi}\colon Y'\to Y$ be an \'etale neighbourhood of $y$ such that the inverse image $\mcN'$
    of $\mcN$ to $\Et(Y')$ admits a global chart.  
    We can assume that $Y'$ is irreducible, whence a variety.  
    Denote by $\pi\colon (Y', \mcN')\to (Y, \mcN)$ the log-morphism of log-varieties in $\loget$.
    
    By Lemma~\ref{et-to-zar-charts},
    the log-structure $\mcN'$ is Zariski.  Moreover, by Lemma~\ref{log-regular-etale-local}, $(Y', \mcN')$
    is also log-regular in $\loget$.  Thus, Lemma~\ref{log-regular-two-tops} shows that
    $(Y', (\mcN')^{\zar})$ is log-regular in $\logzar$.  
    Denote by $Y'^*$ the maximal open subset of $Y'$ on which $(\mcN')^{\zar}$ is trivial.
    By Theorem~\ref{toroidal-regular}, $(Y'^*\subset Y')$ is a strict toroidal embedding.
    Also by \S\ref{log-regular-compactifying-section}, we see that $(\mcN')^{\zar}$ is equal to the Zariski
    compactifying log-structure $\mcM_{Y'^*/Y'}^{\zar}$ on $\zar(Y')$.

    Let $y'\in Y'$ be a closed point mapping to $y\in Y$.
    By Theorem~\ref{strict-chart}, up to shrinking $Y'$ near $y'$ (in the Zariski topology),
    there is a strict \'etale-toric chart $\pi'\colon Y'\to V$ 
    to a normal affine toric variety $V$ such that 
    $(\pi')^{-1}(\mbb T_V) = Y'^*$.  We can assume that $V$ is given by $(N_V, \tau)$; 
    see \S\ref{toric-couples-defn}.  Let $M_V$ be the dual lattice of $N_V$, and let
    \[ P \coloneqq \tau^{\vee}\cap M_V, \]
    which is an fs monoid with $P^{\gp} = M_V$, so $P^{\gp}$ is a free Abelian group.  
    Moreover, by Lemma~\ref{toroidal-charts} and \cite[IV.3.1.6]{Ogus-log-geo}, 
    the log-morphism in $\loget$ induced by $\pi'$,
    \[ b\colon (Y', \mcN')\to \mathsf{A}_P, \]
    is strict and log-\'etale.  

    \medskip
    
    \emph{Step 2}.
    Let $X' \coloneqq X\times_Y Y'$, and
    let $x'\in X'$ be a closed point mapping to $x, y'$.
    By \cite[IV.3.3.1]{Ogus-log-geo}, up to replacing $X',Y'$ 
    by another \'etale neighbourhoods of $\ol{x},\ol{y}$,
    the chart $b$ fits into an \'etale-local chart $(b, \theta, a)$ for $f$:
    \begin{equation}
        \begin{aligned}\label{et-chart-toric-diagram}
            \xymatrix{
            X'\ar[rd]_{f'}\ar[r]^-{a_{\theta}}\ar@/^2pc/[rr]^a & Y_{\theta}'\ar[r]\ar[d]^{f_{\theta}'} & \mathsf{A}_Q\ar[d]^{\mathsf{A}_{\theta}} \\
               & Y'\ar[r]^-b & \mathsf{A}_P
            }\end{aligned}
    \end{equation}
    where
    \begin{itemize}
        \item $\theta\colon P\to Q$ is an injective homomorphism of monoids,
        \item $Y_{\theta}'\to \mathsf{A}_Q$ is the base change of $b$ in the category $\loget$,
        \item $a_{\theta}$ is log-\'etale and strict, and
        \item the chart $a$ is exact at $\ol{x}$.
    \end{itemize}
    Note that in \eqref{et-chart-toric-diagram}, the sheaves of monoids
    $\mcM',\mcN'$, etc., are not written out explicitly.  
    Moreover, $Q$ is a fine monoid by the proof of \cite[IV.3.3.1]{Ogus-log-geo};
    see \cite[II.2.4.4, III.1.2.7]{Ogus-log-geo}.

    As log-\'etale log-morphisms are stable under 
    compositions and base changes (see \cite[IV.3.1.2]{Ogus-log-geo}), the log-morphism 
    \[ a\colon (X', \mc{M}')\to \mathsf{A}_Q \] 
    of log-schemes is strict and log-\'etale in $\loget$.  Hence by \cite[IV.3.1.6]{Ogus-log-geo},
    the morphism of the underlying schemes $\ul{a}\colon X'\to \AAA_Q$ is \'etale.
    Then by \cite[Proposition (2.1.13)]{EGA-IV-II}, 
    the image of $\ul{a}$ is an irreducible and normal open subscheme of $\AAA_Q$.
    Up to taking the normalisation of $\AAA_Q$ and taking the irreducible 
    component containing the image of $\ul{a}$,
    we can assume that $Q$ is fs and that $Q^{\gp}$ is torsion free; cf. the proof of \cite[I.3.4.1 (3)]{Ogus-log-geo}. 

    \medskip
    
    \emph{Step 3}.
    Recall that $P = \tau^{\vee}\cap M_V$ with $P^{\gp} = M_V$.
    Since $Q$ is an fs monoid with $Q^{\gp}$ free, and since $\AAA_{Q^{\gp}}$
    is an open subscheme of $\AAA_Q$, we see that $Q^{\gp}$ is a lattice 
    of rank $d \coloneqq \dim \AAA_Q = \dim X$.
    Let $\mc{A}_P$ and $\mc{A}_Q$ be finite sets of generators of $P$ and $Q$ respectively
    such that $\mc{A}_P$ is contained in $\mc{A}_Q$ via the injection $P\to Q$.
    Denote by $\cone (\mc{A}_P)$ (respectively, by $\cone (\mc{A}_Q)$) the cone generated by 
    $\mc{A}_P$ in $P^{\gp}\otimes_{\ZZ} \RR$ (respectively, by $\mc{A}_Q$ in $Q^{\gp}\otimes_{\ZZ}\RR$).
    The proof of \cite[Theorem 1.3.5]{CLS:toric} shows that
    \[ \big( \cone(\mc{A}_P)\big)^{\vee} \]
    is a strongly convex rational polyhedral cone in $(P^{\gp})^{\vee}\otimes_{\ZZ}\RR$ such that 
    \[ \tau^{\vee}\cap M_V = \cone(\mc{A}_P)\cap M_V. \]
    By the uniqueness statement in \cite[Proposition 1.1, page 3]{oda-convex-bodies-toric}, we see that
    \[ \tau = \big( \cone(\mc{A}_P)\big)^{\vee}. \]
    Again by the proof of \cite[Theorem 1.3.5]{CLS:toric},
    \[ \sigma \coloneqq \big( \cone (\mc{A}_Q) \big)^{\vee} \]
    is a strongly convex rational polyhedral cone in $(Q^{\gp})^{\vee}\otimes_{\ZZ}\RR$ such that 
    \[ Q = \sigma^{\vee} \cap Q^{\gp}. \]
    Thus, the underlying scheme $\AAA_Q$ of the log-scheme $\mathsf{A}_Q$
    is the normal affine toric variety defined by $((Q^{\gp})^{\vee}, \sigma)$.
    Moreover, as $\cone (\mc{A}_P)$ is mapped into $\cone (\mc{A}_Q)$, the image of
    $\sigma$ via the surjection $(Q^{\gp})^{\vee}\otimes_{\ZZ}\RR \to (P^{\gp})^{\vee}\otimes_{\ZZ}\RR$
    is contained $\tau$, hence the morphism of schemes 
    $\AAA_{\theta}\coloneqq \ul{\mathsf{A}}_{\theta}\colon \AAA_Q\to \AAA_P$ 
    is a toric morphism of toric varieties.
    Note that as $P\to Q$ is injective, the homomorphism of monoid algebras $\mbb{K}[P]\to \mbb{K}[Q]$
    is also injective, so the generic point of $\AAA_Q$ is mapped to the generic point of $\AAA_P$ 
    via $\AAA_{\theta}$.  Therefore, we can conclude that
    $\AAA_{\theta}$ is a dominant toric morphism of toric varieties. 

    \medskip

    \emph{Step 4}.
    By construction, $(X', \mcM')$ admits an global chart, then
    by Lemma~\ref{et-to-zar-charts}, $\mcM'$ is a Zariski log-structures on $\Et(X')$.
    By Lemma~\ref{log-regular-etale-local}, $(X', \mcM')$ is log-regular in $\loget$.
    By Lemma~\ref{log-regular-two-tops}, $(X', (\mcM')^{\zar})$ is log-regular in $\logzar$.
    Moreover, by Theorem~\ref{toroidal-to-sm-et}, $\mathsf{A}_{\theta}$ is log-smooth in $\loget$.
    As log-smoothness is stable under base changes and compositions, we see that 
    $f'\colon (X',\mcM')\to (Y',\mcN')$ is log-smooth in $\loget$.  
    Then by Lemma~\ref{log-sm-et-zar}, the log-morphism 
    $(f')^{\zar}\colon (X', (\mcM')^{\zar}) \to (Y', (\mcN')^{\zar})$ is log-smooth in $\logzar$.
\end{proof}


\begin{corollary}\label{log-sm-to-toroidal}
    Let $f\colon (X, \mcM)\to (Y,\mcN)$ be a log-smooth log-morphism of log-regular log-varieties 
    in $\loget$.  Let $X^*$ (respectively, $Y^*$) be the maximal Zariski open subset of 
    $X$ (respectively, of $Y$) on which $\mcM$ (respectively, $\mcN$) is trivial.
    Then the morphism $\ul{f}\colon X\to Y$ of the underlying varieties induces
    a morphism of embeddings
    \[ \ul{f}\colon (X^*\subset X)\to (Y^*\subset Y), \]
    which is a dominant toroidal morphism of toroidal embeddings.
\end{corollary}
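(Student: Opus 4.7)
The plan is to combine Theorem~\ref{toroidal-regular} (which converts log-regularity into a toroidal embedding) with Theorem~\ref{etale-chart-log-sm-thm} (which exhibits the log-smooth morphism \'etale-locally as a pullback from a toric morphism), and then unpack the formal-local definition of toroidal morphism from this data. First, I would apply Theorem~\ref{toroidal-regular} to $(X,\mcM)$ and $(Y,\mcN)$ to conclude that $(X^*\subset X)$ and $(Y^*\subset Y)$ are toroidal embeddings; by \S\ref{log-regular-compactifying-section}, $\mcM$ and $\mcN$ are canonically identified with the compactifying log-structures $\mcM_{X^*/X}$ and $\mcM_{Y^*/Y}$, so $f$ is a log-morphism between the associated compactifying log-varieties.

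Next, I fix a closed point $x\in X$ with $y\coloneqq \ul{f}(x)$ and invoke Theorem~\ref{etale-chart-log-sm-thm} to obtain \'etale neighbourhoods $\ul{\mu}\colon X'\to X$ and $\ul{\pi}\colon Y'\to Y$ of chosen preimages $x'$ and $y'$, strict log-\'etale charts $a\colon (X',\mcM')\to \mathsf{A}_Q$ and $b\colon (Y',\mcN')\to \mathsf{A}_P$ (with $\ul{a},\ul{b}$ honest \'etale morphisms by \cite[IV.3.1.6]{Ogus-log-geo}), and a dominant toric morphism $\AAA_{\theta}\colon \AAA_Q\to \AAA_P$ fitting into the commutative diagram \eqref{et-chart-log-sm-diagram}. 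Strictness of $a,b$ gives $(X')^*=\ul{a}^{-1}(\mbb{T}_Q)$, $(Y')^*=\ul{b}^{-1}(\mbb{T}_P)$, $(X')^*=\ul{\mu}^{-1}(X^*)$ and $(Y')^*=\ul{\pi}^{-1}(Y^*)$; since any toric morphism sends torus into torus, this forces $\ul{f}(X^*)\subseteq Y^*$, so $\ul{f}$ is a morphism of embeddings. Dominance of $\ul{f}$ then follows because $\AAA_{\theta}$ is dominant and $\ul{b}$ is \'etale, whence $\ul{f'}$ is dominant; composing with the open map $\ul{\pi}$ shows that $\ul{f}(X)$ is dense in a nonempty open of the irreducible variety $Y$, hence dense in $Y$.

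Finally, to verify the formal-local definition in \S\ref{toroidal-morphisms} at $x$, I take $W\coloneqq \AAA_Q$, $w\coloneqq \ul{a}(x')$, $V\coloneqq \AAA_P$, $v\coloneqq \ul{b}(y')$ and $g\coloneqq \AAA_{\theta}$. The \'etale morphisms $\ul{\mu},\ul{\pi},\ul{a},\ul{b}$ all induce isomorphisms of completed local rings, and commutativity of \eqref{et-chart-log-sm-diagram} yields the required commutative square \eqref{formal-defn-diagram} of completions. The step I expect to be the main obstacle is checking that under these isomorphisms the ideal of $X\setminus X^*$ at $x$ maps to the ideal of the toric boundary $W\setminus \mbb{T}_W$ at $w$ (and similarly on the $Y$-side), so that $\{(W,W\setminus \mbb{T}_W),w\}$ is a bona fide local toric model of $\{(X,X\setminus X^*),x\}$; this reduces, via the identification $\mcM\cong \mcM_{X^*/X}$ from the first step, to the fact that the strict log-\'etale map $a$ pulls the compactifying log-structure on $\mathsf{A}_Q$ (equivalently, the toric boundary ideal) back to the compactifying log-structure on $X'$ (equivalently, the reduced boundary ideal), which is precisely the bookkeeping carried out at the end of the proof of Theorem~\ref{toroidal-regular}.
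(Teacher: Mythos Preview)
Your proposal is correct and follows the same approach as the paper, which simply cites Theorem~\ref{toroidal-regular} and Theorem~\ref{etale-chart-log-sm-thm}; you have spelled out exactly the unpacking that the phrase ``follows immediately from'' leaves implicit. The point you flag as the main obstacle---matching the boundary ideals under the completed-local-ring isomorphisms---is handled precisely by the strictness of $\mu,\pi,a,b$ together with the identifications $(X')^*=\ul{\mu}^{-1}(X^*)=\ul{a}^{-1}(\mbb{T}_Q)$ and $(Y')^*=\ul{\pi}^{-1}(Y^*)=\ul{b}^{-1}(\mbb{T}_P)$ that you already noted, so there is no gap.
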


\begin{proof}
    This follows immediately from Theorem~\ref{toroidal-regular} 
    and Theorem~\ref{etale-chart-log-sm-thm}.
\end{proof}


\begin{corollary}\label{log-sm-to-toroidal-zar}
    Let $f\colon (X, \mcM)\to (Y,\mcN)$ be a log-smooth log-morphism of log-regular log-varieties 
    in $\logzar$.  Let $X^*$ (respectively, $Y^*$) be the maximal Zariski open subset of 
    $X$ (respectively, of $Y$) on which $\mcM$ (respectively, $\mcN$) is trivial.
    Then the morphism $\ul{f}\colon X\to Y$ of the underlying varieties induces
    a morphism of embeddings
    \[ \ul{f}\colon (X^*\subset X)\to (Y^*\subset Y), \]
    which is a dominant toroidal morphism of \emph{strict} toroidal embeddings.
\end{corollary}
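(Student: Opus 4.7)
The plan is to reduce Corollary~\ref{log-sm-to-toroidal-zar} to its étale analogue, Corollary~\ref{log-sm-to-toroidal}, by pulling back the Zariski log-structures $\mcM$ and $\mcN$ to the small étale sites and appealing to the comparison results already established in \S\ref{log-geo-section}. The key advantage over the étale version is that the toroidal embeddings that result will automatically be \emph{strict} because the log-structures come from Zariski ones.

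First, I would apply Theorem~\ref{toroidal-regular} directly to $(X,\mcM)$ and $(Y,\mcN)$ to conclude that $(X^*\subset X)$ and $(Y^*\subset Y)$ are \emph{strict} toroidal embeddings. This already handles the strictness assertion, so only the toroidality and dominance of $\ul{f}\colon (X^*\subset X)\to (Y^*\subset Y)$ remain.

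Next, let $\varepsilon\colon X_{\et}\to X_{\zar}$ (and similarly for $Y$) denote the canonical projection, and set $\wt{\mcM}\coloneqq \varepsilon^*_{\log}\mcM$, $\wt{\mcN}\coloneqq \varepsilon^*_{\log}\mcN$. By Lemma~\ref{preserved-properties-zar-to-et}, both $\wt{\mcM}$ and $\wt{\mcN}$ are fs log-structures, and by Lemma~\ref{log-regular-two-tops}, the log-varieties $(X,\wt{\mcM})$ and $(Y,\wt{\mcN})$ are log-regular in $\loget$. Since $\wt{\mcM},\wt{\mcN}$ are Zariski log-structures by construction, Lemma~\ref{log-sm-et-zar} applies and lifts the log-smoothness of $f$ in $\logzar$ to log-smoothness of the induced log-morphism $\wt{f}\colon (X,\wt{\mcM})\to (Y,\wt{\mcN})$ in $\loget$. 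Moreover, the maximal open locus where $\wt{\mcM}$ is trivial coincides with $X^*$ (and likewise for $Y^*$), because a stalk of $\wt{\mcM}$ at a scheme-theoretic point is trivial precisely when the corresponding Zariski stalk of $\mcM$ is trivial.

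Finally, I would apply Corollary~\ref{log-sm-to-toroidal} to $\wt{f}$: this gives that $\ul{f}\colon (X^*\subset X)\to (Y^*\subset Y)$ is a dominant toroidal morphism of toroidal embeddings. Combined with the strictness obtained in the first step, this yields the desired conclusion. There is no serious obstacle: the proof is a clean packaging of the two topology-comparison lemmas with Theorem~\ref{toroidal-regular} and Corollary~\ref{log-sm-to-toroidal}; the only point that needs a brief verification is the identification of the trivial locus of $\mcM$ with that of $\wt{\mcM}$, which is immediate from the construction of the pullback log-structure in \S\ref{zariski-log-structures-defn-sec}.
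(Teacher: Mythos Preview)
Your proposal is correct and follows essentially the same approach as the paper, which proves the corollary in one line by citing Lemma~\ref{log-sm-et-zar}, Theorem~\ref{toroidal-regular}, and Corollary~\ref{log-sm-to-toroidal}. You have simply unpacked the implicit steps (the passage through Lemmas~\ref{preserved-properties-zar-to-et} and~\ref{log-regular-two-tops}, and the identification of the trivial loci) that the paper leaves to the reader.
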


\begin{proof}
    This follows from Lemma~\ref{log-sm-et-zar}, Theorem~\ref{toroidal-regular}
    and Corollary~\ref{log-sm-to-toroidal}.
\end{proof}


\subsection{\'Etale-toric charts for toroidal morphisms}

An immediate consequence of Theorem~\ref{etale-chart-log-sm-thm} is the existence
of \'etale-toric charts for toroidal morphisms between toroidal embeddings, which is not obvious from
the definition of toroidal morphisms at the first glance; cf. \S\ref{toroidal-morphisms}.
This is taken as definition of toroidal morphisms in \cite[\S 5.3]{ACP-tropical-curves}.

\begin{theorem}\label{etale-chart-toroidal-morphism-thm}
    Let $f\colon (U_X\subset X)\to (U_Y\subset Y)$ be a dominant toroidal morphism of toroidal embeddings.
    Let $x\in X$ be a closed point and $y=f(x)$.
    Then there is a commutative diagram 
    \begin{equation}
        \begin{aligned}\label{etale-chart-toroidal-morphism}
            \xymatrix{
            (U_X\subset X)\ar[d]^f & (U_{X'}\subset X')\ar[d]^{f'}\ar[r]^-{\mu'}\ar[l]_-{\mu} & W\ar[d]^g \\
            (U_Y\subset Y) & (U_{Y'}\subset Y')\ar[r]^-{\pi'}\ar[l]_-{\pi} & V
            }\end{aligned}
    \end{equation}
    where 
    \begin{itemize}
        \item $f'$ is a dominant toroidal morphism of strict toroidal embeddings,
        \item $g$ is a dominant toric morphism of toric varieties (with tori $\mbb T_W, \mbb T_V$), 
        \item $\pi, \mu, \pi', \mu'$ are \'etale morphisms of varieties, 
        \item $\mu\colon X'\to X$, $\pi\colon Y'\to Y$ are \'etale neighbourhoods of $x,y$ respectively, and
        \item $\pi^{-1}(U_Y) = U_{Y'} = (\pi')^{-1}(\mbb T_V)$ and $\mu^{-1}(U_X) = U_{X'} = (\mu')^{-1}(\mbb T_W)$.
    \end{itemize}
\end{theorem}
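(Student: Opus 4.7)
The strategy is to combine the two deepest results of this section: Theorem~\ref{toroidal-to-sm-et}, which upgrades a dominant toroidal morphism to a log-smooth log-morphism in $\loget$, and Theorem~\ref{etale-chart-log-sm-thm}, which produces \'etale charts by dominant toric morphisms for any log-smooth log-morphism between log-regular log-varieties. The \'etale-toric chart asserted here is essentially read off directly from the output of Theorem~\ref{etale-chart-log-sm-thm}.

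Concretely, I first endow $(X, \mcM_{U_X/X})$ and $(Y, \mcM_{U_Y/Y})$ with their compactifying log-structures, which are log-regular in $\loget$ by Theorem~\ref{toroidal-to-log-reg}. Theorem~\ref{toroidal-to-sm-et} then shows that the induced log-morphism $f^{\et}$ is log-smooth in $\loget$. Applying Theorem~\ref{etale-chart-log-sm-thm} at the chosen closed points $x \in X$ and $y = \ul{f}(x) \in Y$ yields a commutative square
\[\xymatrix{
(X, \mcM_{U_X/X})\ar[d]^{f^{\et}} & (X', \mcM')\ar[l]_-{\mu}\ar[r]^-a\ar[d]^{f'} & \mathsf{A}_Q\ar[d]^{\mathsf{A}_{\theta}} \\
(Y, \mcM_{U_Y/Y}) & (Y', \mcN')\ar[l]_-{\pi}\ar[r]^-b & \mathsf{A}_P
}\]
in which $\mu,\pi$ are \'etale neighbourhoods of $x, y$, the charts $a, b$ are strict and log-\'etale in $\loget$, and the underlying morphism $\AAA_{\theta} \colon \AAA_Q \to \AAA_P$ of $\mathsf{A}_{\theta}$ is a dominant toric morphism of normal toric varieties. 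I set $W \coloneqq \AAA_Q$, $V \coloneqq \AAA_P$, $g \coloneqq \AAA_{\theta}$, $\mu' \coloneqq \ul{a}$ and $\pi' \coloneqq \ul{b}$; the morphisms $\mu', \pi'$ are then \'etale morphisms of varieties by \cite[IV.3.1.6]{Ogus-log-geo}.

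The identifications of the open subsets then follow from the strictness of the various log-morphisms. By Lemma~\ref{etale-pullback-chart}, $\mu$ is strict, so $\mcM' \cong \mcM_{\mu^{-1}(U_X)/X'}$, and in particular the triviality locus of $\mcM'$ equals $U_{X'} \coloneqq \mu^{-1}(U_X)$. On the other hand, Lemma~\ref{toric-log-regular} identifies the log-structure of $\mathsf{A}_Q$ with $\mcM_{\mbb T_W/W}$, whose triviality locus is $\mbb T_W$; strictness of $a$ then forces the triviality locus of $\mcM'$ to also equal $(\mu')^{-1}(\mbb T_W)$. Hence $\mu^{-1}(U_X) = U_{X'} = (\mu')^{-1}(\mbb T_W)$, and the same reasoning applied to $b, \pi$ yields $\pi^{-1}(U_Y) = U_{Y'} = (\pi')^{-1}(\mbb T_V)$.

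To finish, I conclude that $f' \colon (U_{X'} \subset X') \to (U_{Y'} \subset Y')$ is a dominant toroidal morphism of strict toroidal embeddings by applying Corollary~\ref{log-sm-to-toroidal-zar} to $(f')^{\zar}$, which is a log-smooth log-morphism of log-regular log-varieties in $\logzar$ by the final bullets of Theorem~\ref{etale-chart-log-sm-thm}. The main obstacle in this plan is the bookkeeping of strictness and of the various triviality loci, ensuring that the toroidal opens identified from the log side coincide with the preimages along $\mu, \pi, \mu', \pi'$; once that is done, the statement is a direct assembly of the earlier theorems.
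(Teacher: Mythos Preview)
Your proposal is correct and follows essentially the same approach as the paper: apply Theorem~\ref{toroidal-to-sm-et} to obtain log-smoothness of $f^{\et}$, feed this into Theorem~\ref{etale-chart-log-sm-thm}, and then translate the resulting chart diagram back into the language of toroidal embeddings. The paper's own proof is extremely terse (it simply cites Theorem~\ref{toroidal-to-log-reg}, Theorem~\ref{toroidal-regular} and Corollary~\ref{log-sm-to-toroidal} for the translation step), whereas you have spelled out the identification of the open loci $U_{X'}$ and $U_{Y'}$ via strictness and triviality-locus arguments, and you explicitly invoke Corollary~\ref{log-sm-to-toroidal-zar} (rather than Corollary~\ref{log-sm-to-toroidal}) to secure the \emph{strictness} of the toroidal embeddings $(U_{X'}\subset X')$ and $(U_{Y'}\subset Y')$; this is the right choice, since strictness only follows from the Zariski version of Theorem~\ref{toroidal-regular}.
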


\begin{proof}
    Denote by $f^{\et}\colon (X, \mcM_{U_X/X})\to (Y, \mcM_{U_Y/Y})$ the associated log-morphism
    of log-varieties in $\loget$.  By Theorem~\ref{toroidal-to-sm-et}, $f^{\et}$ is log-smooth in $\loget$.
    Then the desired \eqref{etale-chart-toroidal-morphism} is just a translation 
    of \eqref{et-chart-log-sm-diagram} to the language of toroidal embeddings and toroidal morphisms
    by applying Theorem~\ref{toroidal-to-log-reg}, Theorem~\ref{toroidal-regular} 
    and Corollary~\ref{log-sm-to-toroidal}.
\end{proof}

\begin{theorem}\label{etale-chart-toroidal-morphism-thm-zar}
    Let $f\colon (U_X\subset X)\to (U_Y\subset Y)$ be 
    a dominant toroidal morphism of \emph{strict} toroidal embeddings.
    Let $x\in X$ be a closed point and $y=f(x)$.
    Then there is a commutative diagram 
    \begin{equation*}
        \begin{aligned}
            \xymatrix{
            (U_X\subset X)\ar[d]^f & (U_{X'}\subset X')\ar[d]^{f'}\ar[r]^-{\mu'}\ar[l]_-{\mu} & W\ar[d]^g \\
            (U_Y\subset Y) & (U_{Y'}\subset Y')\ar[r]^-{\pi'}\ar[l]_-{\pi} & V
            }\end{aligned}
    \end{equation*}
    where 
    \begin{itemize}
        \item $f'$ is a dominant toroidal morphism of strict toroidal embeddings,
        \item $g$ is a dominant toric morphism of toric varieties (with tori $\mbb T_W, \mbb T_V$), 
        \item $\pi\colon Y'\to Y$ is an open immersion of varieties with $x\in \pi(Y')$,
        \item $\mu, \pi', \mu'$ are \'etale morphisms of varieties, 
        \item $\mu\colon X'\to X$ is an \'etale neighbourhood of $x$, and
        \item $\pi^{-1}(U_Y) = U_{Y'} = (\pi')^{-1}(\mbb T_V)$ and $\mu^{-1}(U_X) = U_{X'} = (\mu')^{-1}(\mbb T_W)$.
    \end{itemize}
\end{theorem}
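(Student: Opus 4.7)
The plan is to adapt the proof of Theorem~\ref{etale-chart-toroidal-morphism-thm} to the Zariski setting by exploiting the strictness of $(U_Y\subset Y)$. By Theorem~\ref{toroidal-to-sm-zar}, the associated log-morphism
\[
f^{\zar}\colon (X,\mcM_{U_X/X}^{\zar})\to (Y,\mcM_{U_Y/Y}^{\zar})
\]
is log-smooth in $\logzar$, and by Theorem~\ref{toroidal-to-log-reg} both the source and target are log-regular in $\logzar$.

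I would first build the chart on the target in the Zariski topology. Since $(U_Y\subset Y)$ is strict, Theorem~\ref{strict-chart} produces a Zariski open neighbourhood $\pi\colon Y'\hookrightarrow Y$ of $y$ and a strict \'etale-toric chart $\pi'\colon Y'\to V$ to a normal affine toric variety $V$ given by some $(N_V,\tau)$. Setting $P\coloneqq \tau^{\vee}\cap M_V$, Lemma~\ref{toroidal-charts} together with Lemma~\ref{log-sm-et-zar} yields a strict log-\'etale chart $b\colon (Y',\mcM_{U_{Y'}/Y'}^{\zar})\to \mathsf{A}_P^{\zar}$ subordinate to the fs monoid $P$, with $(\pi')^{-1}(\mbb T_V)=U_{Y'}$ built in.

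Next, I would invoke the chart-lifting mechanism of Theorem~\ref{etale-chart-log-sm-thm} (applied to $f^{\zar}$ via Lemma~\ref{log-sm-et-zar}, using \cite[IV.3.3.1]{Ogus-log-geo}): starting from $b$ on the target, the log-smoothness of $f^{\zar}$ extends it to a chart $(b,\theta,a)$ of the morphism, yielding an \'etale neighbourhood $\mu\colon X'\to X$ of $x$, an injective homomorphism of fs monoids $\theta\colon P\to Q$ with $Q^{\gp}$ torsion-free, and a strict log-\'etale chart $a\colon (X',\mcM_{U_{X'}/X'}^{\zar})\to \mathsf{A}_Q^{\zar}$. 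As in Step~3 of the proof of Theorem~\ref{etale-chart-log-sm-thm}, one identifies $W\coloneqq \AAA_Q$ as a normal affine toric variety and $g\coloneqq \AAA_{\theta}\colon W\to V$ as a dominant toric morphism; and by Corollary~\ref{log-sm-to-toroidal-zar} the induced $f'\colon (U_{X'}\subset X')\to (U_{Y'}\subset Y')$ is a dominant toroidal morphism of strict toroidal embeddings. The relations $\pi^{-1}(U_Y)=U_{Y'}=(\pi')^{-1}(\mbb T_V)$ and $\mu^{-1}(U_X)=U_{X'}=(\mu')^{-1}(\mbb T_W)$ follow from strictness of the charts together with the description of $X'^*$ and $Y'^*$ in Theorem~\ref{toroidal-regular}.

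The principal subtlety I anticipate is verifying that the chart-lifting step does not force any further modification of the already chosen Zariski neighbourhood $Y'$: by the construction of \cite[IV.3.3.1]{Ogus-log-geo}, once the chart on the target is fixed, the \'etale refinement occurs only on the source, so $\pi$ remains an open immersion. Thus the crucial difference from Theorem~\ref{etale-chart-toroidal-morphism-thm} is entirely contained in the first step, where strictness of $(U_Y\subset Y)$ allows the target chart to be produced over a Zariski open subset via Theorem~\ref{strict-chart} rather than over a proper \'etale neighbourhood.
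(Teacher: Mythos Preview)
Your proposal is correct and follows essentially the same approach as the paper's proof: use Theorem~\ref{strict-chart} to obtain the target chart over a Zariski open neighbourhood $Y'\subset Y$, then run the argument of Theorem~\ref{etale-chart-log-sm-thm} unchanged. Your explicit observation that \cite[IV.3.3.1]{Ogus-log-geo} refines only the source once the target chart is fixed is exactly the point that makes $\pi$ stay an open immersion, and is what the paper leaves implicit when it says ``the rest of the proof follows from the same argument.''
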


\begin{proof}
    By Theorem~\ref{strict-chart}, we can take $\pi'$ as a strict \'etale-toric chart for $(U_Y\subset Y)$ near $y$,
    that is, $\pi\colon Y'\to Y$ is an open immersion.
    Then the rest of the proof follows from the same argument of Theorem~\ref{etale-chart-log-sm-thm}
    and Theorem~\ref{etale-chart-toroidal-morphism-thm}.
\end{proof}

\begin{remark}
    Keep the notations in Theorem~\ref{etale-chart-toroidal-morphism-thm-zar}.
    Although $\pi\colon Y'\to Y$ is an open immersion near $y\in Y$,
    the \'etale morphism $\mu\colon X'\to X$ in general can not be taken to be an open immersion;
    see the proof of \cite[Proposition 3.3]{denef2013remarkstoroidalmorphisms}.
\end{remark}


\section{Saturated base change of toroidal morphisms}\label{sat-base-change-section}

\subsection{Fibres of toroidal morphisms}

Fibres of a toroidal morphism behave like fibres of toric morphisms.

\begin{lemma}\label{toroidal-fibres}
    Let $f\colon (U_X\subset X) \to (U_Y\subset Y)$ be 
    a dominant toroidal morphism of toroidal embeddings.
    Then
    \begin{itemize}
        \item [\emph{(1)}] every irreducible component of a nonempty closed fibre of the induced morphism $f^{-1}(U_Y)\to U_Y$ is normal and intersects $U_X$,
        \item [\emph{(2)}] if $f$ is surjective, then $f(U_X) = U_Y$,
        \item [\emph{(3)}] $f^{-1}(U_Y)\to U_Y$ is flat with closed fibres of pure dimension $\dim X - \dim Y$, and
        \item [\emph{(4)}] every nonempty closed fibre of $U_X\to U_Y$ is nonsingular of pure dimension $\dim X - \dim Y$ (but it may have several irreducible components).
    \end{itemize}
\end{lemma}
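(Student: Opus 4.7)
The plan is to translate all four statements into logarithmic geometry and reduce them to standard facts about log-smooth log-morphisms, rather than verifying them directly from the formal-local definition of toroidal morphisms. By Theorem~\ref{toroidal-to-log-reg} the two embeddings correspond to log-regular log-varieties in $\loget$, and by Theorem~\ref{toroidal-to-sm-et} the induced log-morphism $f^{\et}$ is log-smooth; this is the framework in which I would work throughout.

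For (1), I would fix a closed point $y\in U_Y$ with $f^{-1}(y)\ne\emptyset$ and observe that the pullback log-structure on $\spec k(y)=\spec\mbb K$ is trivial, so base-changing $f^{\et}$ presents the closed fibre $f^{-1}(y)$ as a log-scheme that is log-smooth over a trivial base. By Lemma~\ref{log-regular-equi-smooth} this fibre is log-regular; Lemma~\ref{log-regular-normal} then yields that its underlying scheme is normal and Cohen-Macaulay, so its irreducible components are pairwise disjoint and normal, and the compactifying description in \S\ref{log-regular-compactifying-section} shows that $f^{-1}(y)\cap U_X$, the triviality locus of the fibre log-structure, is dense and hence meets every component. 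Part (2) follows immediately: if $f$ is surjective then every fibre over $U_Y$ is nonempty and meets $U_X$ by (1), giving $U_Y\subseteq f(U_X)$, while the reverse inclusion is automatic from $f$ being a morphism of embeddings.

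For (4), the restriction $U_X\to U_Y$ is obtained from $f^{\et}$ by a base change that renders both source and target log-structures trivial, hence it is a log-smooth log-morphism between log-varieties with trivial log-structures; by \cite[IV.3.1.6]{Ogus-log-geo} its underlying morphism is smooth in the usual sense, so its fibres are nonsingular of pure dimension $\dim X-\dim Y$. Part (3) then drops out from Hironaka's miracle flatness: combining (1) and (4) shows that every closed fibre of $f^{-1}(U_Y)\to U_Y$ is equidimensional of dimension $\dim X-\dim Y$, since the dense open subset $f^{-1}(y)\cap U_X$ of each component is smooth of this dimension; $f^{-1}(U_Y)$ is Cohen-Macaulay as an open subscheme of $X$, and $U_Y$ is regular, so \cite[Theorem~23.1]{CA-Mat} applied at each closed point yields flatness, and flatness at closed points suffices by openness of the flat locus for finite-type morphisms.

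The main conceptual step, and the only place where real work is required, is identifying $f^{-1}(y)$ (for $y\in U_Y$) as a log-regular log-scheme; this hinges on the base point lying in the trivial-log-structure locus of $Y$, together with stability of log-smoothness under base change in the fs category. Once this log-regular fibre is in hand, normality, density of the torus locus, correct fibre dimension, and flatness via miracle flatness all cascade in turn, and no toric computation is needed.
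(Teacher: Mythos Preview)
Your proposal is correct and takes a genuinely different route from the paper's. The paper argues via \'etale-toric charts (Theorem~\ref{etale-chart-toroidal-morphism-thm}): it reduces to a dominant toric morphism $g\colon W_\sigma\to W_\tau$, observes that the torus action makes all fibres over $\mbb T_\tau$ mutually isomorphic (whence normal and equidimensional by generic flatness and generic smoothness), and shows by an explicit torus-translation argument that every component of such a fibre meets $\mbb T_\sigma$; flatness of $f^{-1}(U_Y)\to U_Y$ is then read off from the toric model. Your argument stays entirely inside the logarithmic framework the paper has already built: since $y\in U_Y$ the base change $\spec k(y)\to (Y,\mcM_{U_Y/Y})$ is strict, so the fs fibre product has underlying scheme $f^{-1}(y)$ with log-structure pulled back from $\mcM_{U_X/X}$; log-smoothness over $\spec\mbb K$ then yields log-regularity, and normality together with density of the trivial locus drop out. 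The paper's approach makes the toric geometry visible and needs no log-regularity theory once the charts exist; yours is cleaner, avoids all toric computation, and your use of miracle flatness for (3) is a genuine simplification over the paper's \'etale-local comparison. One small wrinkle: your deduction of (2) from (1) literally only reaches closed points of $U_Y$, but since (4) already gives that $U_X\to U_Y$ is smooth and hence open, $f(U_X)$ is an open set containing all closed points of $U_Y$ and the conclusion follows.
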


\begin{proof}
    \emph{Step 1}.
    Pick arbitrary closed points $x\in X$ and $y\in Y$ such that $f(x) = y$.
    Since $f$ is dominant and toroidal, by Theorem~\ref{etale-chart-toroidal-morphism-thm}, 
    there is a commutative diagram as \eqref{etale-chart-toroidal-morphism}
    \begin{equation}
        \begin{aligned}\label{etale-neighbd-diagram}
            \xymatrix{
            X\ar[d]^f & X'\ar[l]_-{p}\ar[r]^-{q}\ar[d]^{f'} & W_{\sigma}\ar[d]^g \\
            Y & Y'\ar[l]_-{\phi}\ar[r]^-{\psi} & W_{\tau}
            }\end{aligned}
    \end{equation}
    where $p,q,\phi, \psi$ are \'etale morphisms, $p,\phi$ are \'etale neighbourhoods of $x,y$ respectively,
    and $g\colon W_{\sigma}\to W_{\tau}$ is a dominant toric morphism of toric varieties.  
    Denote by $\mbb{T}_{\sigma}$ (respectively, by $\mbb{T}_{\tau}$) the torus of $W_{\sigma}$
    (respectively, of $W_{\tau}$), then the induced homomorphism of tori $\mbb{T}_{\sigma}\to \mbb{T}_{\tau}$
    is also dominant.  By \cite[Proposition 1.1.1 (a)]{CLS:toric}, 
    $g(\mbb{T}_{\sigma})$ is a closed subtorus of $\mbb{T}_{\tau}$, 
    hence we must have $g(\mbb{T}_{\sigma}) = \mbb{T}_{\tau}$. 

    Let $x'\in X',y'\in Y'$ be closed points mapping to $x,y$ such that $f'(x') = y'$.
    Denote by $u,v$ the images of $x', y'$ in  $W_{\sigma}, W_{\tau}$ respectively.  
    Then clearly the fibre $X'_{y'}$ is a common \'etale neighbourhood of $(X_y, x)$ and $((W_{\sigma})_v, u)$. 
    To simplify notations in the rest of the proof, for every pair of closed points $x\in X, y\in Y$,
    we always use the same notations as above, such as $W_{\sigma}, W_{\tau}$, $u,v$, etc.,
    for the commutative diagram \eqref{etale-chart-toroidal-morphism} 
    in Theorem~\ref{etale-chart-toroidal-morphism-thm}. 
    This will not lead to confusion from the context. 

    \medskip

    \emph{Step 2}.
    By considering the actions of tori, 
    it is evident that all the fibres of $g$ over $\mbb{T}_{\tau}$ 
    have isomorphic open neighbourhoods in $W_{\sigma}$;
    in particular, all the fibres of $g$ over $\mbb{T}_{\tau}$ are isomorphic;
    cf. \cite[Proposition 2.1.4]{HLY02_toric_morphisms}.  Then generic flatness shows that the 
    morphism $g^{-1}(\mbb{T}_{\tau})\to \mbb{T}_{\tau}$ is flat.
    Thus, every fibre of $g$ over $\mbb{T}_{\tau}$ is of pure dimension 
    $\dim X - \dim Y$.  Moreover, as $W_{\sigma}$ is normal, we see that 
    every fibre of $g$ over $\mbb T_{\tau}$ is a normal scheme of pure dimension $\dim X - \dim Y$,
    but it may have several irreducible components.
    
    On the other hand, recall that $\mbb T_{\sigma}\to \mbb T_{\tau}$ is surjective by Step 1.  
    For the same reason as above, 
    all the fibres of $\mbb T_{\sigma}\to \mbb T_{\tau}$ are isomorphic.  Then generic smoothness 
    implies that every fibre of $\mbb{T}_{\sigma}\to \mbb{T}_\tau$ is nonsingular
    of pure dimension $\dim X - \dim Y$.
    Let $v'\in \mbb{T}_{\tau}$ be a closed point, 
    and let $G'$ be an irreducible component of $(W_{\sigma})_{v'}$ 
    with generic point $\eta'$.
    As $v'\in \mbb{T}_{\tau}$, $(W_{\sigma})_{v'}$ is isomorphic to a general fibre $(W_{\sigma})_{v''}$ of $g$,
    where $v''\in \mbb{T}_{\tau}$ is a general closed point.  
    Let $t\in \mbb T_{\tau}$ such that $t.v' = v''$.  As $\mbb T_{\sigma}\to \mbb T_{\tau}$
    is surjective, there is an $s\in \mbb T_{\sigma}$ such that $g(s) = t$.  As $g$ is equivariant, 
    there is an irreducible component $G''$ of $(W_{\sigma})_{v''}$ such that $s(G') = G''$.
    Since $\dim (W_{\sigma}\setminus \mbb{T}_{\sigma}) < \dim W_{\sigma}$,
    generic flatness shows that the generic point $\eta''$ of $G''$ is contained in $\mbb{T}_{\sigma}$.
    Since $s^{-1}. \eta'' = \eta'$, and since $\mbb T_{\sigma}$ is an orbit
    of the torus action, we see that $\eta'\in G'$ is also contained in $\mbb{T}_{\sigma}$,
    i.e., $\mbb{T}_{\sigma}$ intersects $G'$. 

    \medskip

    \emph{Step 3}.
    Let $y\in U_Y$ be a closed point such that
    the fibre $X_y$ of $f$ over $y$ is nonempty.
    Let $x\in X$ be a closed point such that $y=f(x)$.
    Then we can form the diagram \eqref{etale-neighbd-diagram} in Step 1;
    in particular, $(X'_{y'},x')$ is a common \'etale neighbourhood of $(X_y,x)$ and $((W_{\sigma})_v, u)$,
    where $x'\in X',y'\in Y', u\in W_{\sigma}, v\in W_{\tau} $ are closed points as in Step 1.

    By Step 2, $(W_{\sigma})_v$ is a normal scheme of pure dimension $\dim X - \dim Y$.
    Thus, we see that $X_y$ is also a normal scheme of pure dimension $\dim X - \dim Y$ near $x$.
    Varying $x$ in the fibre $X_y$, we can conclude that $X_y$ is normal and every irreducible component of
    $X_y$ has dimension $\dim X - \dim Y$.
    
    Let $F$ be an irreducible component of $X_y$, which is a normal subvariety of $X$.
    Let $x\in X$, which is a general closed point of $F$,
    then $x$ is a nonsingular point of $F$.
    For the points $x,y$, form the diagram \eqref{etale-neighbd-diagram}.
    Then $u\in W_{\sigma}$ is also a nonsingular point of the fibre $(W_{\sigma})_v$.
    Let $G$ be an irreducible component of $(W_{\sigma})_v$ containing $u$.
    Then there is a normal irreducible component $F'$ of $X'_{y'}$ such that
    $(F', x')$ is a common \'etale neighbourhood of $(F, x)$ and $(G, u)$.
    Denote by 
    \[ \ol{p}\colon F'\to F \,\text{ and }\, \ol{q}\colon F'\to G \]
    the corresponding \'etale morphisms induced by $p,q$.
    By Step 2, $G\cap \mbb{T}_{\sigma}$ is a nonempty open dense subset of $G$.
    Since $G$ is irreducible, $G\cap \mbb{T}_{\sigma}\cap \ol{q}(F')$ is also a nonempty open dense subset of $G$, hence
    $V\coloneqq (\ol{q})^{-1}(\mbb{T}_{\sigma})$ is a nonempty open subset of $F'$.
    Thus, $U_X\cap F = \ol{p}(V)$ is a nonempty open dense subset of $F$, so (1) holds.
    The result (2) follows immediately from (1).
    Moreover, by \cite[(3), page 46]{CA-Mat}, 
    the morphism $f^{-1}(U_Y)\to U_Y$ is flat.  This shows (3).
    As regularity of schemes is an \'etale-local property, 
    we can conclude that (4) holds.
\end{proof}


\subsection{Saturated base change in $\loget$}

The following result is a general version of Theorem~\ref{sat-base-change-intro} 
for toroidal embeddings whose toroidal boundaries may have self-intersections.

\begin{theorem}\label{sat-base-change}
	Let $(U_X\subset X)$, $(U_Y\subset Y)$ and $(U_Z\subset Z)$ be toroidal embeddings
    with the corresponding compactifying log-structures $\mc{M}_{U_X/X}$, 
    $\mc{M}_{U_Y/Y}$ and $\mc{M}_{U_Z/Z}$ respectively.  Let 
    \[ f\colon (U_Y\subset Y)\to (U_Z\subset Z)\,\text{ and }\, g\colon (U_X\subset X)\to (U_Z\subset Z) \]
	be morphisms of embeddings.  Assume that 
	$g$ is a dominant toroidal morphism; equivalently,
	$(X, \mc{M}_{U_X/X}) \to (Z, \mc{M}_{U_Z/Z})$ is a log-smooth log-morphism in $\loget$.
    
	Denote by $W$ the normalisation of an irreducible
	component of $Y\times_Z X$ that dominates $Y$, and by $p\colon W\to Y$ and $q\colon W\to X$
	the induced projection morphisms respectively.
    \[\xymatrix{
	(U_W\subset W)\ar[r]^-q\ar[d]^p & (U_X\subset X)\ar[d]^g \\
	(U_Y\subset Y)\ar[r]^-f & (U_Z\subset Z)
	}\]
	Let 
    \[ U_W \coloneqq p^{-1}(U_Y)\cap q^{-1}(U_X). \]
    Then $U_W$ is a nonempty open subset of $W$, and $(U_W\subset W)$ is also a toroidal embedding.
    Moreover, the induced morphism of embeddings
    \[ p\colon (U_W\subset W)\to (U_Y\subset Y) \]
    is a dominant toroidal morphism of toroidal embeddings.
\end{theorem}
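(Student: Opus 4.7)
The plan is to translate the statement into log-geometry, apply the two classical log-geometric inputs flagged in \textbf{(IV)} of the introduction, and translate back. Equip $X$, $Y$, $Z$ with their compactifying log-structures $\mc{M}_{U_X/X}$, $\mc{M}_{U_Y/Y}$, $\mc{M}_{U_Z/Z}$ on the small \'etale sites. By Theorem~\ref{toroidal-to-log-reg} each is a log-regular, hence fs, log-variety in $\loget$; by Theorem~\ref{toroidal-to-sm-et} the dominant toroidal morphism $g$ becomes a log-smooth log-morphism $g^{\et}\colon (X,\mc{M}_{U_X/X})\to (Z,\mc{M}_{U_Z/Z})$ in $\loget$, and $f$ likewise promotes to a log-morphism $f^{\et}$.

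I then form the fibre product in the category of fs log-schemes,
\[ (T,\mc{M}_T) := (Y,\mc{M}_{U_Y/Y}) \times^{\fs}_{(Z,\mc{M}_{U_Z/Z})} (X,\mc{M}_{U_X/X}). \]
Since log-smoothness is stable under base change in the fs category, the projection $(T,\mc{M}_T)\to (Y,\mc{M}_{U_Y/Y})$ is log-smooth, and since its target is log-regular the second classical input recalled in \textbf{(IV)} gives that $(T,\mc{M}_T)$ is itself log-regular. In particular $\ul{T}$ is normal by Lemma~\ref{log-regular-normal}, so its irreducible components are pairwise disjoint, each being open and closed in $\ul{T}$.

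Next I identify $W$ with one irreducible component of $\ul{T}$. Over the open locus $U_Y\times_{U_Z} U_X$ the three ambient log-structures are trivial, so by Lemma~\ref{trivial-fibre-product} the fs fibre product coincides there with the ordinary scheme-theoretic fibre product; in particular this open set embeds as a dense open subscheme of $\ul{T}$, and by Lemma~\ref{toroidal-fibres} it is smooth over $U_Y$, hence a disjoint union of smooth irreducible varieties each dominating $U_Y$. The canonical map $\ul{T}\to Y\times_Z X$ is therefore birational onto the union of those irreducible components of $Y\times_Z X$ which dominate $Y$, and by normality it realizes $\ul{T}$ as the disjoint union of the normalizations of these components. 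Hence $W$ is one such open-and-closed component of $\ul{T}$, and the restriction $\mc{M}_W$ of $\mc{M}_T$ to $W$ makes $(W,\mc{M}_W)$ a log-regular fs log-variety in $\loget$.

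Finally, Theorem~\ref{toroidal-regular} yields that $(W^*\subset W)$ is a toroidal embedding, where $W^*$ is the maximal open subset of $W$ on which $\mc{M}_W$ is trivial. Since $\mc{M}_T$ is an fs-pullback of log-structures that are trivial precisely on $U_Y$ and $U_X$, one checks directly that $W^* = p^{-1}(U_Y)\cap q^{-1}(U_X) = U_W$, which is nonempty because $W$ dominates the dense open $U_Y$ of $Y$. The projection $(W,\mc{M}_W)\to (Y,\mc{M}_{U_Y/Y})$ is log-smooth as the restriction of the log-smooth projection from $T$ to one of its open-and-closed components, so Corollary~\ref{log-sm-to-toroidal} yields that $p\colon (U_W\subset W)\to (U_Y\subset Y)$ is a dominant toroidal morphism. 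The main obstacle I anticipate is the identification of the irreducible components of $\ul{T}$ with the normalizations of the components of $Y\times_Z X$ dominating $Y$; this rests on standard properties of fs fibre products (the underlying scheme of $Y\times^{\fs}_Z X$ is a finite cover of $Y\times_Z X$ obtained by integralization and saturation, which over the locus where the three log-structures become trivial reduces to the ordinary fibre product) that are not developed in the excerpt and must be imported from log-geometric references. Alternatively, one can bypass these facts by defining $W$ directly as the normalization of a chosen dominant component of $Y\times_Z X$, equipping it with the fs-pullback log-structure \'etale-locally via the toric charts of Theorem~\ref{etale-chart-toroidal-morphism-thm}, and verifying log-smoothness of $W\to Y$ in the reduced toric case, where the assertion is classical.
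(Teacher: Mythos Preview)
Your proposal is correct and follows essentially the same approach as the paper. The paper likewise forms the fs fibre product (denoted $\mathbf{\Omega}^{\fs}$), deduces its log-regularity from stability of log-smoothness under base change together with \cite[IV.3.5.3]{Ogus-log-geo}, and resolves precisely the obstacle you anticipate by invoking \cite[III.2.1.5]{Ogus-log-geo} (the map $\Omega^{\fs}\to Y\times_Z X$ factors as a finite surjection followed by a closed immersion) and the open embedding of $U=U_Y\times_{U_Z}U_X$ into $\Omega^{\fs}$, so that the component $W''$ of $\Omega^{\fs}$ containing $U_W$ maps finitely and birationally onto $W'$ and hence coincides with its normalisation $W$.
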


\begin{proof}
    To avoid confusion about notation of morphisms and log-morphisms 
    in the categories of schemes and log-schemes, we denote by 
    \[ f^{\et}\colon (Y, \mcM_{U_Y/Y})\to (Z, \mcM_{U_Z/Z})\,\text{ and }\,
    g^{\et}\colon (X, \mcM_{U_X/X}) \to (Z, \mcM_{U_Z/Z}) \]
    the log-morphism of the log-regular log-varieties in $\loget$, 
    then the morphisms of underlying varieties
    associated to $f^{\et}, g^{\et}$ are just denoted by $f,g$.

    \medskip
    
    \emph{Step 1}. The log-varieties $(X, \mc{M}_{U_X/X})$, $(Y, \mc{M}_{U_Y/Y})$ and $(Z, \mc{M}_{U_Z/Z})$
    are fs as \'etale-locally they admit fs charts by Theorem~\ref{general-chart}.  Denote by 
    \[ \mathbf{\Omega} \coloneqq (\Omega, \mc{M}_{\Omega}), \,\,
    \mathbf{\Omega}^{\inte} \coloneqq (\Omega^{\inte}, \mc{M}_{\Omega^{\inte}})\, \text{ and }\,
    \mathbf{\Omega}^{\fs} \coloneqq (\Omega^{\fs}, \mc{M}_{\Omega^{\fs}}) \] 
    the fibre product of $(Y, \mc{M}_{U_Y/Y})$ and $(X, \mc{M}_{U_X/X})$
    along $(Z, \mc{M}_{U_Z/Z})$ in the category of log-schemes, fine log-schemes and fs log-schemes respectively.
    In particular, the underlying scheme $\Omega$ of $\mathbf{\Omega}$ is isomorphic to $Y\times_Z X$;
    see \cite[III.2.1.2]{Ogus-log-geo}.
    
    By \cite[III.2.1.5]{Ogus-log-geo}, the morphism of underlying schemes
    $\Omega^{\inte} \to \Omega$ (respectively, $\Omega^{\fs} \to \Omega^{\inte}$)
    is a closed immersion (respectively, a finite surjective morphism).
    Moreover, by \cite[IV.3.1.11]{Ogus-log-geo}, $\mathbf{\Omega}^{\fs}\to \mathbf{\Omega}^{\inte}$
    and $\mathbf{\Omega}^{\inte}\to \mathbf{\Omega}$ are log-\'etale log-morphisms of log-schemes in $\loget$.
    As log-smooth log-morphisms are stable under compositions and base changes
    in the category of log-schemes (see \cite[IV.3.1.2]{Ogus-log-geo}), the induced log-morphism
    \[ \mathbf{\Omega}^{\fs}\to (Y, \mc{M}_{U_Y/Y}) \] 
    is log-smooth in $\loget$.  
    Thus, $\mathbf{\Omega}^{\fs}$ is also log-regular in $\loget$ 
    by \cite[IV.3.5.3]{Ogus-log-geo}.
    In particular, the log-structure $\mc{M}_{\Omega^{\fs}}$ is compactifying, that is,
    the canonical homomorphism $\mc{M}_{\Omega^{\fs, *}/\Omega^{\fs}}\to \mc{M}_{\Omega^{\fs}}$
    is an isomorphism (see \cite[III.1.11.12]{Ogus-log-geo}), 
    where $\Omega^{\fs,*}$ is the maximal Zariski open subset of $\Omega^{\fs}$
    on which the log-structure $\mc{M}_{\Omega^{\fs}}$ is trivial.
    By Lemma~\ref{log-regular-normal}, $\Omega^{\fs}$
    has normal irreducible components, hence $\Omega^{\fs}$ is a disjoint union 
    of normal varieties.  Moreover, by Theorem~\ref{toroidal-regular}, every irreducible component
    of $\Omega^{\fs}$ is a toroidal variety. 

    Let $U$ be the fibre product $U_Y\times_{U_Z} U_X$, and let
    $(U, \mc{M}_U)$ be the log-scheme endowed with the trivial log-structure.
    We claim that $U$ is nonempty.
    Denote by $\eta_Y$ the generic point of $Y$.
    As $f$ is a morphism of embeddings, the image $\eta_f \coloneqq f(\eta_Y)$ is contained in $U_Z$.
    By assumption, $Y\times_Z X$ is nonempty, then $\eta_f$ is contained in the image
    of $g\colon X\to Z$.  By Lemma~\ref{toroidal-fibres}, we see that $U$ is nonempty.
    
    By Lemma~\ref{trivial-fibre-product}, 
    $(U, \mc{M}_U)$ is the restriction of $\mathbf{\Omega}$ to $\Et (U)$.
    By the constructions in \cite[III.2.1.4, III.2.1.5]{Ogus-log-geo}, 
    there is a log-morphism $(U, \mc{M}_U)\to \mathbf{\Omega}^{\fs}$ such that
    \begin{itemize}
        \item underlying morphism of schemes of $(U, \mc{M}_U)\to \mathbf{\Omega}^{\fs}$ is an open immersion,
        \item $(U, \mc{M}_U)$ is also the restriction of $\mathbf{\Omega}^{\fs}$ to $\Et (U)$, and
        \item the composite log-morphism $(U, \mcM_U)\to \mathbf{\Omega}^{\fs}\to \mathbf{\Omega}$ is equal to the canonical log-morphism $(U, \mcM_U)\to \mathbf{\Omega}$.
    \end{itemize} 
    Thus, we can view $U$ as an open subscheme of $\Omega^{\fs,*}$.
    Moreover, by \cite[III.1.11.6]{Ogus-log-geo}, the scheme $U$ is nonsingular in the usual sense.

    \medskip

    \emph{Step 2}.
    Let $W'$ be an irreducible component of $Y\times_Z X$ that dominates $Y$.
    Recall that $\eta_Y$ is the generic point of $Y$, and $\eta_f \coloneqq f(\eta_Y) \in U_Z$.
    Then the generic fibre of $W'\to Y$ is an irreducible component of 
    $(\spec k(\eta_Y))\times_Z X$.  By Lemma~\ref{toroidal-fibres}, 
    every irreducible component of $(\spec k(\eta_f))\times_Z X$ intersects the open subset $U_X$ of $X$,
    hence the pullback of $U_X$ to $W'$ is a dense open subset of $W'$.
    Let $W$ be the normalisation of $W'$ with projection morphisms $p\colon W\to Y$
    and $q\colon W\to X$, then $p^{-1}(U_Y)\cap q^{-1}(U_X)$ is a nonempty dense open subset of $W$. 
    
    Since $U$ is a nonsingular scheme, there is a unique irreducible component $U_W$ of $U$
    admitting an open immersion $U_W\hookrightarrow W'$.
    Then the normalisation $W\to W'$ is an isomorphism over $U_W$.
    Thus, viewed as an open subset of $W$, we get
    \[ U_W = p^{-1}(U_Y)\cap q^{-1}(U_X). \]
    Let $W''$ be the irreducible component of $\Omega^{\fs}$ that contains $U_W$.
    By \cite[III.2.1.5]{Ogus-log-geo}, there is an induced 
    finite surjective morphism $W''\to W'$.
    As $U_W$ embeds into both $W'$ and $W''$,
    the morphism $W''\to W'$ is also birational.  
    Since $\Omega^{\fs}$ is normal, the finite, birational morphism
    $W''\to W'$ is the normalisation of $W'$, that is, $W''\cong W$.

    Denote by $W^*\subseteq W$ the maximal open subset on which $\mcM_{\Omega^{\fs}}|_{\Et(W)}$ 
    is trivial.
    By \cite[12.2.8]{gabber2018foundationsringtheory}, $W^*$ is mapped into $U_Y, U_X$ 
    via $p\colon W\to Y, q\colon W\to X$.  Then 
    \[ U_W\subseteq W^* \subseteq p^{-1}(U_Y)\cap q^{-1}(U_X) = U_W. \]
    Thus, we see that $W^* = U_W$.  Hence by Theorem~\ref{toroidal-regular}, 
    $(U_W\subset W)$ is a toroidal embedding.
    Moreover, since $\mathbf{\Omega}^{\fs} \to (Y, \mc{M}_{U_Y/Y})$ is log-smooth in $\loget$, 
    by Corollary~\ref{log-sm-to-toroidal},
    we can conclude that $p\colon (U_W\subset W)\to (U_Y\subset Y)$ is a dominant toroidal morphism.
\end{proof}

\subsection{Saturated base change in $\logzar$}

Now we prove Theorem~\ref{sat-base-change-intro}.

\begin{theorem}[=\protect{Theorem~\ref{sat-base-change-intro}}]\label{sat-base-change-final}
	Let $(U_X\subset X)$, $(U_Y\subset Y)$ and $(U_Z\subset Z)$ be \emph{strict} toroidal embeddings
    with the corresponding Zariski compactifying log-structures $\mc{M}_{U_X/X}^{\zar}$, 
    $\mc{M}_{U_Y/Y}^{\zar}$ and $\mc{M}_{U_Z/Z}^{\zar}$ respectively.  Let 
    \[ f\colon (U_Y\subset Y)\to (U_Z\subset Z)\,\text{ and }\, g\colon (U_X\subset X)\to (U_Z\subset Z) \]
	be morphisms of embeddings.  Assume that 
	$g$ is a dominant toroidal morphism; equivalently,
	$(X, \mc{M}_{U_X/X}^{\zar}) \to (Z, \mc{M}_{U_Z/Z}^{\zar})$ is a log-smooth log-morphism in $\logzar$.
    
	Denote by $W$ the normalisation of an irreducible
	component of $Y\times_Z X$ that dominates $Y$, and by $p\colon W\to Y$ and $q\colon W\to X$
	the induced projection morphisms respectively.
    \[\xymatrix{
	(U_W\subset W)\ar[r]^-q\ar[d]^p & (U_X\subset X)\ar[d]^g \\
	(U_Y\subset Y)\ar[r]^-f & (U_Z\subset Z)
	}\]
	Let 
    \[ U_W \coloneqq p^{-1}(U_Y)\cap q^{-1}(U_X). \]
    Then $U_W$ is a nonempty open subset of $W$, 
    and $(U_W\subset W)$ is also a \emph{strict} toroidal embedding.
    Moreover, the induced morphism of embeddings
    \[ p\colon (U_W\subset W)\to (U_Y\subset Y) \]
    is a dominant toroidal morphism of toroidal embeddings.
\end{theorem}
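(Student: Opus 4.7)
The plan is to run the proof of Theorem~\ref{sat-base-change} in $\logzar$ rather than in $\loget$, and conclude via Corollary~\ref{log-sm-to-toroidal-zar}. Since the three embeddings are strict, Theorem~\ref{toroidal-to-log-reg} produces log-regular fs log-varieties $(X, \mathcal{M}_{U_X/X}^{\zar}), (Y, \mathcal{M}_{U_Y/Y}^{\zar}), (Z, \mathcal{M}_{U_Z/Z}^{\zar})$ in $\logzar$, and Theorem~\ref{toroidal-to-sm-zar} ensures that $g^{\zar}$ is log-smooth in $\logzar$.

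I then form the fs fibre product
\[ \mathbf{\Omega}^{\fs,\zar} \coloneqq (Y, \mathcal{M}_{U_Y/Y}^{\zar}) \times_{(Z, \mathcal{M}_{U_Z/Z}^{\zar})} (X, \mathcal{M}_{U_X/X}^{\zar}) \]
in the category of fs log-schemes in $\logzar$, which exists by the Zariski analogues of the constructions in \cite[\S III.2]{Ogus-log-geo} available in the uniform treatment of \cite[\S 12.2]{gabber2018foundationsringtheory}. The Zariski versions of base-change stability of log-smoothness and of preservation of log-regularity by log-smooth morphisms (same references) imply that the projection $\mathbf{\Omega}^{\fs,\zar} \to (Y, \mathcal{M}_{U_Y/Y}^{\zar})$ is log-smooth and that $\mathbf{\Omega}^{\fs,\zar}$ is log-regular in $\logzar$. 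By Lemma~\ref{log-regular-normal}, the underlying scheme $\Omega^{\fs,\zar}$ is then a disjoint union of normal varieties.

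Since all three input log-structures are Zariski, the fs pushout of fs monoids involved in forming the fs fibre product is computed Zariski-locally. Hence the underlying scheme $\Omega^{\fs,\zar}$ agrees with that of the fs fibre product $\mathbf{\Omega}^{\fs}$ in $\loget$ appearing in the proof of Theorem~\ref{sat-base-change}; in particular, the trivial locus $U \coloneqq U_Y \times_{U_Z} U_X$ embeds as an open subscheme of $\Omega^{\fs,\zar}$, and the irreducible component of $\Omega^{\fs,\zar}$ containing $U$ is identified with the normalisation $W$ of the main component of $Y \times_Z X$, exactly as in the proof of Theorem~\ref{sat-base-change}. Applying Corollary~\ref{log-sm-to-toroidal-zar} to the induced log-smooth log-morphism $(W, \mathcal{M}_{U_W/W}^{\zar}) \to (Y, \mathcal{M}_{U_Y/Y}^{\zar})$ in $\logzar$ then yields both that $(U_W \subset W)$ is a \emph{strict} toroidal embedding and that $p$ is a dominant toroidal morphism, as required.

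The main obstacle is the bookkeeping for the Zariski analogues of the constructions and stability results from \cite{Ogus-log-geo} used in the proof of Theorem~\ref{sat-base-change}, together with the identification of the underlying schemes of the two fs fibre products. These are essentially formal consequences of the uniform treatment in \cite{gabber2018foundationsringtheory}, but care is needed to verify that the fs pushout construction preserves the Zariski nature of the input log-structures and produces the same underlying scheme in both topologies.
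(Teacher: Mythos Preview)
Your proposal is correct and follows essentially the same approach as the paper: form the fs fibre product in $\logzar$, use stability of log-smoothness under base change and preservation of log-regularity (citing \cite{gabber2018foundationsringtheory} and \cite{Kato-toric} for the Zariski versions), then apply Theorem~\ref{toroidal-regular} and Corollary~\ref{log-sm-to-toroidal-zar}. The one unnecessary detour in your write-up is the comparison between the underlying schemes of the \'etale and Zariski fs fibre products; the paper avoids this entirely by rerunning the identification of $W$ with an irreducible component of $\Theta^{\fs}$ directly in the Zariski category (the argument about $U = U_Y\times_{U_Z}U_X$ being open and nonempty, and $W$ being the normalisation of the corresponding component of $Y\times_Z X$, goes through verbatim), so you can drop that step and the associated caveat.
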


\begin{proof}
    The proof works exactly the same as Theorem~\ref{sat-base-change},
    so we only give a sketch to indicate the citations of related results 
    for Zariski sites.  The log-varieties $(X, \mc{M}_{U_X/X}^{\zar})$, 
    $(Y, \mc{M}_{U_Y/Y}^{\zar})$ and $(Z, \mc{M}_{U_Z/Z}^{\zar})$ are 
    fs as Zariski-locally they admit fs charts by Theorem~\ref{strict-chart}.
    Denote by 
    \[ f^{\zar}\colon (Y, \mcM_{U_Y/Y}^{\zar})\to (Z, \mcM_{U_Z/Z}^{\zar})\,\text{ and }\,
    g^{\zar}\colon (X, \mcM_{U_X/X}^{\zar}) \to (Z, \mcM_{U_Z/Z}^{\zar}) \]
    the corresponding log-morphism of log-regular log-varieties in $\logzar$.
    Denote by 
    \[ \mathbf{\Theta} \coloneqq (\Theta, \mc{M}_{\Theta}), \,\,
    \mathbf{\Theta}^{\inte} \coloneqq (\Theta^{\inte}, \mc{M}_{\Theta^{\inte}}) \text{ and }\,
    \mathbf{\Theta}^{\fs} \coloneqq (\Theta^{\fs}, \mc{M}_{\Theta^{\fs}}) \] 
    the fibre product of $(Y, \mc{M}_{U_Y/Y}^{\zar})$ and $(X, \mc{M}_{U_X/X}^{\zar})$
    along $(Z, \mc{M}_{U_Z/Z}^{\zar})$ in the category of log-schemes, 
    fine log-schemes, and fs log-schemes in $\logzar$ respectively.
    Note that the construction of \cite[III.2.1.2]{Ogus-log-geo} also works for the Zariski site.
    Then $\Theta$ is isomorphic to $Y\times_Z X$, and $\mcM_{\Theta}$ is a coherent log-structure
    on $\zar(\Theta)$.
    
    By \cite[12.2.36 (ii)]{gabber2018foundationsringtheory}, the morphism of schemes
    $\Theta^{\inte} \to \Theta$ (respectively, $\Theta^{\fs} \to \Theta^{\inte}$)
    is a closed immersion (respectively, a finite morphism).
    By \cite[12.3.34]{gabber2018foundationsringtheory}, and by
    the proof of \cite[12.2.35]{gabber2018foundationsringtheory},
    \[ \mathbf{\Theta}^{\fs}\to \mathbf{\Theta}^{\inte}\, \text{ and } \,\mathbf{\Theta}^{\inte}\to \mathbf{\Theta}\]
    are log-\'etale in $\logzar$.
    As log-smoothness in $\logzar$ is stable under compositions and base changes
    (see \cite[12.3.24]{gabber2018foundationsringtheory}),
    the induced log-morphism of log-schemes 
    \[ \mathbf{\Theta}^{\fs}\to (Y, \mc{M}_{U_Y/Y}^{\zar}) \] 
    is log-smooth in $\logzar$.  
    Then by \cite[(8.2)]{Kato-toric} or \cite[12.5.44]{gabber2018foundationsringtheory},
    the log-scheme $\mathbf{\Theta}^{\fs}$ is log-regular in $\logzar$.
    By Lemma~\ref{log-regular-normal}, $\Theta^{\fs}$
    has normal irreducible components, hence $\Theta^{\fs}$ is a disjoint union 
    of normal varieties.  
    Then by Theorem~\ref{toroidal-regular}, every irreducible component
    of $\Theta^{\fs}$ is a strict toroidal variety. 

    Let $U$ be the fibre product $U_Y\times_{U_Z} U_X$, and let
    $(U, \mc{M}_U^{\zar})$ be the log-scheme endowed with the trivial log-structure.
    By the same argument in Theorem~\ref{sat-base-change}, $U$ is nonempty.
    By Lemma~\ref{trivial-fibre-product}, $(U, \mc{M}_U^{\zar})$ is the restriction of $\mathbf{\Theta}$ to $\zar (U)$.
    Denote by $\Theta^{\fs,*}$ the maximal open subset of $\Theta^{\fs}$ on which $\mcM_{\Theta^{\fs}}$
    is trivial.
    By the construction in \cite[12.2.35]{gabber2018foundationsringtheory},
    $U$ is an open subscheme of $\Theta^{\fs}$ such that $\mc{M}_U^{\zar}$ is the restriction 
    of $\mc{M}_{\Theta^{\fs}}$ to $\zar(U)$, whence $U\subseteq \Theta^{\fs,*}$.  Moreover, 
    by \cite[12.3.27]{gabber2018foundationsringtheory}, 
    the scheme $U$ is nonsingular in the usual sense.

    Let $W'$ be an irreducible component of $Y\times_Z X$ that dominates $Y$.
    Let $W$ be the normalisation of $W'$ with projection morphisms $p\colon W\to Y, q\colon W\to X$.
    By the same argument in Theorem~\ref{sat-base-change}, 
    $W$ is an irreducible component of $\Theta^{\fs}$, and
    there is a unique irreducible component $U_W$ of $U$ contained in $W$, which is equal to $p^{-1}(U_Y)\cap q^{-1}(U_X)$.
    
    Denote by $\mc{M}_W^{\zar}$ the restriction of the log-structure $\mc{M}_{\Theta^{\fs}}$ to $\zar(W)$.
    The same argument in Theorem~\ref{sat-base-change} shows that 
    $U_W$ is the maximal open subset of $W$ on which $\mc{M}_W^{\zar}$ is trivial.
    By Theorem~\ref{toroidal-regular},
    $(U_W\subset W)$ is a strict toroidal embedding.
    By Corollary~\ref{log-sm-to-toroidal-zar},
    we see that $p\colon (U_W\subset W)\to (U_Y\subset Y)$ is a dominant toroidal morphism.
\end{proof}


\medskip

\printbibliography

\vspace{1em}
 
\noindent\small{Santai Qu} 

\noindent\small{\textsc{Institute of Geometry and Physics, University of Science and Technology of China, Hefei, Anhui Province, China} }

\noindent\small{Email: \texttt{santaiqu@ustc.edu.cn}}

\end{document}